\numberwithin{equation}{section}
\theoremstyle{plain}
\newtheorem{theorem}{Theorem}[section]
\newtheorem{corollary}[theorem]{Corollary}
\newtheorem{proposition}[theorem]{Proposition}
\newtheorem{lemma}[theorem]{Lemma}
\newtheorem{sublemma}[theorem]{Sublemma}
\theoremstyle{definition}
\newtheorem{definition}[theorem]{Definition}
\theoremstyle{remark}
\newtheorem{example}[theorem]{Example}
\newtheorem{claim}[theorem]{Claim}
\newcommand{\mbb}[1]{\mathbb{#1}}
\newcommand{\mbf}[1]{\mathbf{#1}}
\newcommand{\mcal}[1]{\mathcal{#1}}
\newcommand{\msf}[1]{\mathsf{#1}}
\newcommand{\id}{\mathrm{id}}
\DeclareMathOperator{\sr}{sr}
\DeclareMathOperator{\GL}{GL}
\DeclareMathOperator{\dom}{dom}
\DeclareMathOperator{\diag}{diag}
\DeclareMathOperator{\coker}{coker}
\DeclareMathOperator{\Tor}{Tor}
\DeclareMathOperator{\SL}{SL}
\DeclareMathOperator{\Ad}{Ad}
\DeclareMathOperator{\Tot}{Tot}
\DeclareMathOperator{\hofib}{hofib}
\title[Homology pro stability]{Homology pro stability for Tor-unital pro rings}
\author{Ryomei Iwasa}
\address{Graduate School of Mathematical Sciences, the University of Tokyo, 3-8-1 Komaba, Meguro-ku, Tokyo, 153-8914 Japan.}
\email{ryomei@ms.u-tokyo.ac.jp}
\thanks{}
\begin{document}

\maketitle

\begin{abstract}
Let $\{A_m\}$ be a pro system of associative commutative, not necessarily unital, rings.
Assume that the pro systems $\{\Tor^{\mbb{Z}\ltimes A_m}_i(\mbb{Z},\mbb{Z})\}_m$ vanish for all $i>0$.
Then we prove that the sequence
\[
	\{H_l(\GL_n(A_m))\}_m \to \{H_l(\GL_{n+1}(A_m))\}_m \to \{H_l(\GL_{n+2}(A_m))\}_m \to \cdots
\]
stabilizes up to pro isomorphisms for $n$ large enough than $l$ and the stable range of $A_m$'s.
\end{abstract}

\tableofcontents

\section{Introduction}

\subsection{}
The homology stability for general linear groups is a simple but deep question on homological algebra.
Let $R$ be an associative unital ring.
We consider the general linear groups $\GL_n(R)$ of $R$ and their sequence
\[
	\GL_n(R) \hookrightarrow \GL_{n+1}(R) \hookrightarrow \GL_{n+2}(R) \hookrightarrow \cdots,
\]
where each embedding is given by sending $\alpha$ to $\left(\begin{smallmatrix} \alpha & 0 \\ 0 & 1 \end{smallmatrix}\right)$.
The question is whether the induced sequence of the integral group homology
\[
	H_l(\GL_n(R)) \to H_l(\GL_{n+1}(R)) \to H_l(\GL_{n+2}(R)) \to \cdots
\]
stabilizes for $n$ large enough than $l$.
There have been many works on this problem, and the most striking result was obtained by Suslin.
\begin{theorem}[Suslin \cite{Su82}]
Let $R$ be an associative unital ring and $l\ge 0$.
Then the canonical map
\[
	H_l(\GL_n(R)) \to H_l(\GL_{n+1}(R))
\]
is surjective for $n\ge \max(2l,l+\sr(R)-1)$ and bijective for $n\ge \max(2l+1,l+\sr(R))$, where $\sr(R)$ is the stable range of $R$.
\end{theorem}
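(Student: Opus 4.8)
The plan is to prove the statement by the method of Suslin and van der Kallen: let $\GL_n(R)$ act on a highly connected semisimplicial set built from unimodular vectors, and extract the stabilization isomorphisms from the associated equivariant homology spectral sequence by a double induction on $l$ and $n$. First I would introduce the semisimplicial set $U_\bullet$ of \emph{unimodular frames}: a $p$-simplex is an ordered sequence $(v_0,\dots,v_p)$ of vectors in $R^n$ for which $R^{p+1}\to R^n$, $e_i\mapsto v_i$, is a split injection, with face maps given by deletion. The group $\GL_n(R)$ acts simplicially, and since any partial basis extends to a basis the action is transitive on $p$-simplices for $p\le n-1$, the stabilizer of the standard frame $(e_1,\dots,e_{p+1})$ being the mirabolic subgroup
\[
  P_{n,p}=\left\{\begin{pmatrix} I_{p+1} & B \\ 0 & A \end{pmatrix}: A\in\GL_{n-p-1}(R),\ B\in M_{(p+1)\times(n-p-1)}(R)\right\},
\]
so that $P_{n,p}\cong\GL_{n-p-1}(R)\ltimes R^{(p+1)(n-p-1)}$.

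The geometric input, and the only place where $\sr(R)$ enters, is a connectivity estimate: the realization $|U_\bullet|$ is $(n-\sr(R)-1)$-connected. I would prove this by the general-position argument of Vaserstein and van der Kallen, in which a given spherical cycle is filled by repeatedly invoking the defining property of the stable range to alter the last coordinate of its vertices so that they become unimodular together with a fixed vector, thereby coning the cycle off one dimension at a time. This yields $\tilde{H}_i(|U_\bullet|)=0$ for $i\le n-\sr(R)-1$.

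Next I would run the spectral sequence of the action of $G=\GL_n(R)$ on the augmented chain complex of $U_\bullet$. By transitivity and Shapiro's lemma its first page is $E^1_{p,q}=H_q(P_{n,p})$ for $p\ge 0$, with augmentation $E^1_{-1,q}=H_q(\GL_n(R))$, and the connectivity estimate forces the complex $(E^1_{\bullet,q},d^1)$ to be exact in total degree $\le n-\sr(R)-1$. The differential $d^1=\sum_i(-1)^i(d_i)_\ast$ simplifies because the translations carrying the $i$-th face of a standard frame back to standard position differ only by elements of the target stabilizer, so the face maps all agree on homology; hence $d^1$ alternates between the stabilization map and zero according to the parity of $p$. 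Feeding the resulting exact complex into the induction and comparing it with the analogous complex for $n-1$ yields, by a diagram chase, first surjectivity of $H_l(\GL_n(R))\to H_l(\GL_{n+1}(R))$ for $n\ge\max(2l,l+\sr(R)-1)$ and then, bootstrapping on the surjectivity already obtained in lower degrees, bijectivity for $n\ge\max(2l+1,l+\sr(R))$. The threshold $l+\sr(R)$ reflects the connectivity line $p+q\le n-\sr(R)-1$, while the threshold $2l$ reflects the depth of the induction: each differential relates degree-$q$ homology of $\GL_{n-p-1}(R)$ to that of $\GL_{n-p}(R)$, so controlling $H_l$ consumes on the order of $2l$ in the rank.

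The main obstacle is that over a general, possibly finite, ring the mirabolic stabilizer $P_{n,p}$ does \emph{not} have the same homology as its Levi quotient $\GL_{n-p-1}(R)$ — already $\GL_1(\mbb{F}_2)\ltimes\mbb{F}_2=\mbb{Z}/2$ shows this fails — so a priori the $E^1$-page records homologies of affine groups rather than of general linear groups, and the naive ``center kills'' argument is unavailable for lack of invertible scalars. I would circumvent this by enriching $U_\bullet$, taking as a $p$-simplex a unimodular frame together with a complement of the summand it spans. This can be arranged without changing the connectivity in the relevant range, while reducing the stabilizer to exactly $\GL_{n-p-1}(R)$, so that $E^1_{p,q}=H_q(\GL_{n-p-1}(R))$ and the $d^1$-differentials become honest stabilization maps. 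Making this reduction precise, and then pinning down the exact constants $2l$ versus $2l+1$ as one propagates surjectivity and injectivity through the induction, is where the bulk of the work lies.
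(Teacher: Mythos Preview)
The paper does not prove this theorem: it is quoted in the introduction as background, attributed to Suslin \cite{Su82}, with no argument supplied. There is therefore nothing in the paper to compare your proposal against directly.

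That said, what you outline is essentially van der Kallen's strategy, not Suslin's. Suslin in \cite{Su82} does not run the equivariant spectral sequence of the unimodular-frame complex; he works instead with the union $X_n(R)=\bigcup_{\sigma\in\Pi_n}BT^\sigma(R)$ of classifying spaces of strict triangular subgroups indexed by partial orderings of $\{1,\dots,n\}$, proves acyclicity and stability for these spaces via a covering by the subspaces where a fixed index is maximal, and then transfers the result to $H_\ast(\GL_n(R))$ through the Volodin model. This is exactly the machinery the present paper imports and adapts to the pro setting in \S\ref{ProAcyclic}--\S\ref{MainThm}; if you strip the pro decorations from those sections you will see Suslin's original argument. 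The payoff of Suslin's route is that the mirabolic-versus-Levi problem you flag never arises: the relevant comparison $H_l(T^\sigma\ltimes M_{n,k})\simeq H_l(T^\sigma)$ is handled inside the triangular framework (compare Theorem~\ref{thm:proacyclic}(ii) and Theorem~\ref{thm:prostability}(iv) here).

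Your own sketch has two genuine soft spots. First, the claim that ``the face maps all agree on homology'' is not free: the elements carrying the $i$-th face of the standard frame back to the standard $(p{-}1)$-frame differ by permutation matrices, so identifying all the $(d_i)_\ast$ requires knowing that conjugation by $\Sigma_{n-p}$ acts trivially on $H_q$ of the stabilizer. Over a general ring this has to be \emph{proved} as part of the induction (it is item (v) in Theorem~\ref{thm:prostability} of this paper), not assumed. Second, your proposed cure for the mirabolic stabilizers --- recording a complement as part of each simplex so the stabilizer becomes exactly $\GL_{n-p-1}(R)$ --- is nonstandard, and the assertion that the enriched complex keeps the same connectivity is itself a general-position theorem of the same flavour and difficulty as the one you are trying to use; you have not indicated how to get it. Neither \cite{vdK80} nor \cite{Su82} proceeds this way.
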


Things become much harder and interesting if we consider \textit{non-unital} rings.
Then the homology stability is strongly related with the $K$-theory excision and the Tor-unitality.
\[
\xymatrix{
	& \text{homology stability} \ar[rd] & \\
	\text{Tor-unitality} \ar@{<->}[rr] \ar@{.>}[ru]^? & & \text{$K$-theory excision}
}
\]
Let $R$ be an associative unital ring and $A$ a two-sided ideal of $R$.
We define the $n$-th relative $K$-group by
\[
	K_n(R,A) := \pi_n \hofib(B\GL(R)^+ \to B\GL(R/A)^+).
\]
We say that \textit{$A$ satisfies the $K$-theory excision} if, for every unital ring $R$ which contains $A$ as a two-sided ideal and for every $n\ge 1$, the canonical map
\[
	K_n(\mbb{Z}\ltimes A,A) \xrightarrow{\sim} K_n(R,A)
\]
is an isomorphism.
It is well-known that the $K$-theory excision fails in general.
However, if the homology $H_l(\GL_n(A))$ stabilizes for $n$ large enough, then $A$ satisfies the $K$-theory excision.
Such being the case, the homology stability for non-unital rings fails in general, even if the stable range of $A$ is finite.

On the other hand, in \cite{Su95}, Suslin has completely determined the obstruction to the $K$-theory excision:
An associative ring $A$ satisfies the $K$-theory excision if and only if $A$ is \textit{Tor-unital}, i.e.\ $\Tor^{\mbb{Z}\ltimes A}_i(\mbb{Z},\mbb{Z})=0$ for all $i>0$.
Hence, we may hope that Tor-unital rings satisfy the homology stability.
Again, Suslin has given a partial solution.
\begin{theorem}[Suslin \cite{Su96}]\label{stab:Su96}
Let $A$ be a Tor-unital $\mbb{Q}$-algebra, $r=\max(\sr(A),2)$ and $l\ge 0$.
Then the canonical map
\[
	H_l(\GL_n(A)) \to H_l(\GL_{n+1}(A))
\]
is surjective for $n\ge 2l+r-2$ and bijective for $n\ge 2l+r-1$.
\end{theorem}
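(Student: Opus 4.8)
The plan is to run the Suslin--van der Kallen homology stability machine for the congruence subgroup $\GL_n(A)\subset\GL_n(B)$, where $B=\mbb{Q}\ltimes A$, invoking the Tor-unitality hypothesis at the single point where the unital argument breaks down. Since $A$ is a $\mbb{Q}$-algebra the augmentation $B\to\mbb{Q}$ splits, so $\GL_n(B)\cong\GL_n(A)\rtimes\GL_n(\mbb{Q})$ and $\GL_n(A)=\ker(\GL_n(B)\to\GL_n(\mbb{Q}))$ is exactly $I+M_n(A)$. First I would introduce the semisimplicial set $U_n$ whose $p$-simplices are ordered sequences $(v_0,\dots,v_p)$ in $B^n$ that extend to a basis and satisfy $v_i\equiv e_{j_i}\pmod{A^n}$ for distinct standard basis vectors $e_{j_i}$. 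The group $\GL_n(A)$ acts on $U_n$, transitively on simplices of each dimension in the stable range, and the stabilizer of the standard simplex $(e_{n-p},\dots,e_n)$ sits in an extension
\[
 1\to A^{(p+1)(n-p-1)}\to\mathrm{Stab}(\sigma_p)\to\GL_{n-p-1}(A)\to1,
\]
whose kernel is the additive group of the relevant off-diagonal block.

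The first input is a connectivity estimate: because $B$ has finite stable range bounded in terms of $r=\max(\sr(A),2)$, the van der Kallen argument shows that $U_n$ is highly connected, with connectivity growing linearly in $n-r$. Feeding the action of $\GL_n(A)$ on $U_n$ into the equivariant homology spectral sequence gives
\[
 E^1_{p,q}=H_q(\mathrm{Stab}(\sigma_p))\ \Longrightarrow\ H_{p+q}(\GL_n(A);\widetilde{C}_*(U_n)),
\]
and the connectivity forces the abutment to vanish in a range of total degrees. To convert this into the usual stability recursion one must identify each $E^1_{p,q}$ with $H_q(\GL_{n-p-1}(A))$ within that range.

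This identification is where the unital proof fails and where Tor-unitality enters. The stabilizer is not $\GL_{n-p-1}(A)$ itself but the affine extension above, and the Hochschild--Serre spectral sequence of that extension has $E^2_{s,t}=H_s\bigl(\GL_{n-p-1}(A);H_t(A^{(p+1)(n-p-1)})\bigr)$; the obstruction to the projection being a homology isomorphism is the positive-degree homology of the additive groups $A^N$, which over $\mbb{Q}$ is the augmentation ideal of the exterior algebra $\Lambda^*(A^N)$. The decisive observation is that, as $p$ varies, these correction terms assemble --- along the simplicial differentials of $U_n$ --- into reduced bar-type complexes $\cdots\to A\otimes_{\mbb{Q}}A\to A\to0$ tensored with lower $\GL$-homology. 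By Wodzicki's theorem the hypothesis $\Tor^{\mbb{Z}\ltimes A}_i(\mbb{Z},\mbb{Z})=0$ for $i>0$ is equivalent to the H-unitality of $A$, i.e.\ to the acyclicity of exactly these bar complexes. Hence the correction terms cancel and the $E^1$-page collapses, in the relevant range, to the same shape $E^1_{p,q}=H_q(\GL_{n-p-1}(A))$ as in Suslin's unital theorem.

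Once the $E^1$-page has the unital shape, the proof finishes as in \cite{Su82}: the vanishing of the abutment in the connectivity range is fed into a simultaneous induction on $l$ and $n$, from which one reads off surjectivity of $H_l(\GL_n(A))\to H_l(\GL_{n+1}(A))$ for $n\ge 2l+r-2$ and bijectivity for $n\ge 2l+r-1$; the shift by $r$ records the stable-range bound on the connectivity of $U_n$ together with the one-step loss created by the affine stabilizers. I expect the genuine difficulty to be the third step, namely proving that the assembled reduced homology of the affine kernels is computed by the bar complex of $A$ and is therefore annihilated by H-unitality. Carrying this out demands a choice of $U_n$ whose simplicial structure is compatible with the bar differential, and the characteristic-zero hypothesis is used twice over: to linearize $H_*(A^N)$ through the exterior algebra, and to split off the spurious $\GL_{n-p-1}(\mbb{Q})$-isotypic pieces so that only the $A$-homology survives.
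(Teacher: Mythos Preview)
Your proposal has a genuine gap at the step you yourself flag as the difficulty. The claim that the correction terms from $H_t(A^N)$, $t>0$, ``assemble along the simplicial differentials of $U_n$ into reduced bar-type complexes'' is not substantiated, and I do not see how to make it work. The $d^1$-differential of the equivariant spectral sequence is induced by inclusions of stabilizers $\mathrm{Stab}(\sigma_p)\hookrightarrow\mathrm{Stab}(d_i\sigma_p)$; on the affine kernels this is an additive inclusion $M_{n-p-1,\,p+1}(A)\hookrightarrow M_{n-p,\,p}(A)$ that does not involve the ring multiplication of $A$ at all, whereas the bar differential $a_1\otimes\cdots\otimes a_k\mapsto\sum(-1)^i a_1\otimes\cdots\otimes a_ia_{i+1}\otimes\cdots\otimes a_k$ is built precisely from that multiplication. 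So there is no reason to expect H-unitality to annihilate these corrections via the $d^1$-complex, and even if part of the differential happened to look right you would be entangling two spectral sequences (Hochschild--Serre for each stabilizer, and the equivariant one for the action on $U_n$) in a way that is very hard to control.

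Neither Suslin's original argument nor the present paper works this way. Both first prove a separate acyclicity statement: the union of classifying spaces of the triangular subgroups $\bigcup_{\sigma\in\Pi_n}BT^\sigma(A)\subset B\GL_n(A)$ has vanishing reduced homology in the range $n\ge 2l+1$. Suslin obtains this over $\mbb{Q}$ via Malcev theory (passing to the associated nilpotent Lie algebras); the present paper replaces Malcev theory by a direct argument from Tor-unitality (the key Lemma~\ref{lem:key} and Theorem~\ref{thm:proacyclic}), and it is in \emph{this} proof that bar-type resolutions of $A$ genuinely enter, through pro-resolutions of pseudo-free $A$-modules. Only once the triangular acyclicity is available does one run the covering argument with Volodin spaces $V_n(A)$ and the unimodular complex, in an intertwined induction (Steps~1--4 of \S\ref{MainThm}) that establishes stability for $E_n$ and for $V_n$ simultaneously with the triviality of several conjugation actions. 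Your proposal omits the triangular/Volodin layer entirely; the identification $E^1_{p,q}\cong H_q(\GL_{n-p-1}(A))$ that you need is essentially Theorem~\ref{thm:prostability}(iv), which is one of the hardest outputs of the whole induction rather than a direct consequence of H-unitality.
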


Unfortunately, commutative rings really happen to be Tor-unital.
Instead, a recent trend has been to think \textit{Tor-unital pro rings}.
We say that a pro system $\{A_m\}$ of associative rings is \textit{Tor-unital} if the pro system $\{\Tor^{\mbb{Z}\ltimes A_m}_i(\mbb{Z},\mbb{Z})\}_m$ vanish for all $i>0$.
A notable discovery by Morrow \cite{Mo15} is that, for any ideal $A$ of a noetherian commutative ring, the pro ring $\{A^m\}_{m\ge 1}$ is Tor-unital.
Besides, Geisser-Hesselholt \cite{GH06} has generalized Suslin's excision theorem to the pro setting:
If $\{A_m\}$ is a Tor-unital pro ring then the canonical map
\[
	\{K_n(\mbb{Z}\ltimes A_m,A_m)\}_m \xrightarrow{\sim} \{K_n(R_m,A_m)\}_m
\]
is a pro isomorphism for any pro system of unital rings $\{R_m\}$ with a level map $\{A_m\}\to\{R_m\}$ which exhibits each $A_m$ as a two-sided ideal of $R_m$.

Our main theorem is a pro version of Theorem \ref{stab:Su96}.
\begin{theorem}\label{main}
Let $\{A_m\}$ be a commutative Tor-unital pro ring\footnotemark, $r=\max(\sr(A_m),2)$ and $l\ge 0$.
Then the canonical map
\[
	\{H_l(\GL_n(A_m))\}_m \to \{H_l(\GL_{n+1}(A_m))\}_m
\]
is a pro epimorphism for $n\ge 2l+r-2$ and a pro isomorphism for $n\ge 2l+r-1$.
\end{theorem}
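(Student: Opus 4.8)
The plan is to carry Suslin's proof of Theorem~\ref{stab:Su96} over to the pro category, substituting pro-Tor-unitality at the one place where unitality is used. Set $R_m:=\mbb{Z}\ltimes A_m$, so that $\GL_n(R_m)=\GL_n(A_m)\rtimes\GL_n(\mbb{Z})$; the stable range $\sr(R_m)$ is bounded in terms of $r$ (via the standard estimates for $\sr$ of a split extension of $\mbb{Z}$ by an ideal, using $\sr(\mbb{Z})=2$), and this bound is absorbed into the numerology below. For each $n$ one takes van der Kallen's poset $\mcal{U}_n=\mcal{U}_n(R_m)$ of unimodular sequences of vectors in $R_m^n$; its associated ordered semisimplicial set is $\bigl((n-\sr(R_m)-1)/2\bigr)$-connected, $\GL_n(R_m)$ acts on it, and the stabilizer of a standard $k$-simplex is the affine group $\GL_{n-k-1}(R_m)\ltimes M_{(k+1)\times(n-k-1)}(R_m)$. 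Restricting the action on the augmented chain complex $C_\ast(\mcal{U}_n)$ along $\GL_n(A_m)\hookrightarrow\GL_n(R_m)$ and using that $\mcal{U}_n$ is highly connected, the equivariant homology spectral sequence of $\GL_n(A_m)$ acting on $C_\ast(\mcal{U}_n)$ reads
\[
E^1_{k,q}=\bigoplus_{\bar\sigma}H_q\!\left(\GL_{n-k-1}(A_m)\ltimes M_{(k+1)\times(n-k-1)}(A_m)\right)\ \Longrightarrow\ H_{k+q}(\GL_n(A_m))
\]
in total degrees $\le(n-\sr(R_m)-1)/2$, the index $\bar\sigma$ running over the $\GL_n(A_m)$-orbits of $k$-simplices, i.e.\ over the $k$-simplices of $\mcal{U}_n(\mbb{Z})$ (since $\GL_n(A_m)\backslash\GL_n(R_m)=\GL_n(\mbb{Z})$), and each summand being group homology of the displayed affine group with trivial coefficients (Shapiro's lemma). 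This $E^1$-page and its edge maps are formally those of the machine underlying the unital stability theorem; nothing so far depends on the ring beyond its stable range.

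The crux is the following pro-statement about the affine groups: for all $c,d\ge0$ the projection $\GL_d(A_m)\ltimes M_{c\times d}(A_m)\twoheadrightarrow\GL_d(A_m)$ induces a pro-isomorphism on $H_q$ (in a range of degrees growing linearly with $d$, which is all that is needed; I expect in fact all $q$). Via the Lyndon--Hochschild--Serre spectral sequence of this split extension, it suffices to show that $\{H_p(\GL_d(A_m);\widetilde H_t(M_{c\times d}(A_m)))\}_m$ is pro-zero for $t\ge1$, where $\widetilde H_t$ is the reduced homology of the abelian group $M_{c\times d}(A_m)\cong A_m^{\oplus cd}$. Filtering $\widetilde H_t(A_m^{\oplus cd})$ by polynomial-functor degree over $\mbb{Z}$ and passing to multilinear subquotients reduces this to the pro-vanishing of $\{H_p(\GL_d(A_m);A_m^{\otimes t}\otimes P)\}_m$ for $t\ge1$ and $P$ a permutation $\GL_d(A_m)$-module on the matrix indices. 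This is the pro-refinement of the homological core of Suslin--Wodzicki's excision theorem, and it is where the hypothesis is consumed: pro-Tor-unitality of $\{A_m\}$ says exactly that the bar complexes $(\cdots\to A_m^{\otimes2}\to A_m)$ are pro-acyclic in positive degrees, and feeding this pro-acyclicity into the $\GL_d$-equivariant bar resolution of the trivial module kills the coefficients, pro-uniformly in $d$ (the same pro-acyclicity underlies Geisser--Hesselholt's pro-excision theorem, and by Morrow's theorem the hypothesis is nonvacuous). Commutativity of the $A_m$ enters here just as in Suslin's proof of Theorem~\ref{stab:Su96}, to keep the polynomial coefficient systems under control. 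Note that the pro-structure is unavoidable: already $\{A_m^{\otimes2}\}_m$ need not be pro-zero, so the individual groups $H_p(\GL_d(A_m);A_m^{\otimes t}\otimes P)$ generally do not vanish.

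Given the crux, the $E^1$-page above pro-agrees, in the range of degrees that matters for $H_l$, with the $E^1$-page of the unital-type spectral sequence obtained by deleting every affine factor $M_{(k+1)\times(n-k-1)}(A_m)$. Since pro-epimorphisms and pro-isomorphisms of pro-abelian groups are stable under the operations appearing in a spectral-sequence comparison, Suslin's inductive homology-stability argument then runs verbatim in the pro category: one first shows that $\{H_l(\GL_n(A_m))\}\to\{H_l(\GL_{n+1}(A_m))\}$ is a pro-epimorphism for $n\ge2l+r-2$, then feeds this back through the $k=1$ column of the spectral sequence to upgrade it to a pro-isomorphism for $n\ge2l+r-1$, inducting on $n$; Suslin's Theorem~\ref{stab:Su96} for the unital rings $R_m$ and $R_m/A_m=\mbb{Z}$ can be used to pin down the common stable value. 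The single genuinely new ingredient is the crux --- making the Suslin--Wodzicki H-unitality vanishing work in the pro category with enough control in the rank $d$ --- everything else being the levelwise connectivity input (no pro-subtlety) and the standard numerical homology-stability chase, now performed with pro-epimorphisms and pro-isomorphisms in place of their honest counterparts.
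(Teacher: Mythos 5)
Your overall architecture (van der Kallen's highly connected complex of unimodular sequences, equivariant spectral sequence, stabilizers of affine type, reduce everything to showing that $H_q(\GL_d(\mbf{A})\ltimes M_{c\times d}(\mbf{A}))\to H_q(\GL_d(\mbf{A}))$ is a pro isomorphism in a range) is a reasonable reorganization of the standard stability machine, and your ``crux'' is indeed essentially equivalent to the hardest ingredient of the paper (Theorem \ref{thm:prostability}(iv) together with Theorem \ref{thm:proacyclic}). The gap is in your proof of the crux. You reduce it to the pro-vanishing of $\{H_p(\GL_d(A_m);A_m^{\otimes t}\otimes P)\}_m$ for $t\ge 1$ at \emph{fixed finite rank} $d$, and then assert that feeding the pro-acyclicity of the bar complex into a $\GL_d$-equivariant bar resolution ``kills the coefficients, pro-uniformly in $d$.'' This is exactly the step that does not follow. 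The Suslin--Wodzicki coefficient-vanishing arguments you are invoking work only for $\GL=\GL_\infty$: they crucially use shift/translation automorphisms to move the coefficient module off to infinity, which is unavailable at finite $d$. For finite rank the analogous vanishing is precisely the content of Suslin's Corollary 5.6 in \cite{Su96} (acyclicity of unions of triangular spaces), which he could only prove for $\mbb{Q}$-algebras via Malcev theory; replacing that input is the entire point of the present paper, and it is not a formal consequence of Tor-unitality plus a bar resolution.

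What the paper actually does instead (\S\ref{ProAcyclic}) is to give up on proving vanishing at a fixed finite level and prove a weaker, two-variable statement: Lemma \ref{lem:key} shows only that the stabilization map $\tilde H_l(T^\sigma(A_{s(m)},P_{s(m)}))\to \tilde H_l(T^{\sigma\bigstar[p]}(A_m,P_m))$ into a \emph{larger} triangular group is zero, after moving both in the pro-index $m$ (using pro resolutions of pseudo-free modules, Proposition \ref{prop2:TorUnital}) and in the poset $\sigma$ (the $\sigma\bigstar[p]$ construction, a pro analogue of Suslin's Lemma 7.4 in \cite{Su82}). Genuine vanishing at a fixed rank $n\ge 2l+1$ is then recovered by playing this off against a separately proved homology stability for the unions of triangular spaces themselves (Sublemma \ref{sublem:proacyclic}), and the passage from triangular groups to $E_n$ and $\GL_n$ goes through Volodin spaces and a four-step intertwined induction (\S\ref{MainThm}), not through your affine stabilizers directly. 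So your proposal correctly isolates where the new mathematics must go, but the argument you offer there would not close; you would need something like the $\bigstar[p]$ stabilization trick, or another substitute for the Malcev-theoretic contraction, to make the finite-rank coefficient vanishing work. (Two smaller points: your identification of the $\GL_n(A_m)$-orbits of simplices with those of $\mcal{U}_n(\mbb{Z})$ needs the relative $K_1$/elementary-subgroup control of \S\ref{K1}, since $\GL_n(A_m)$ does not act transitively on $A$-unimodular frames; and the paper's stability is run for $E_n(\mbf{A})$ first, with $\GL_n(\mbf{A})$ recovered at the end via the extension by $H_1(\GL(\mbf{A}))$.)
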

\footnotetext{``commutative'' means that each $A_m$ is commutative.
However, this condition may not be essential.
We expect that the theorem is true without the commutativity assumption.}

It follows from Theorem \ref{main} that if $\{A_m\}$ is commutative Tor-unital then the action of $\GL_n(\mbb{Z})$ on $\{H_l(\GL_n(A_m))\}_m$ is pro trivial for $n\ge 2l+r-1$, cf.\ Corollary \ref{cor:protrivial}.
Together with the standard argument this reproves Geisser-Hesselholt's pro excision theorem for commutative Tor-unital rings of finite stable range.

\subsection{Outline}

Suslin used the Malcev theory in the proof of Theorem \ref{stab:Su96}, which works only for $\mbb{Q}$-algebras.
More precisely, he used the Malcev theory to get an acyclicity of the union of triangular spaces, cf.\ \cite[Corollary 5.6]{Su96}.
We prove the pro version of the acyclicity by using a totally different method; it is closer to the methods developed in \cite{Su82,Su95}.

In \S\ref{K1}, we prove the pro stability for $H_1(\GL_n)$, cf.\ Theorem \ref{thm:stabilityK1}.
This essentially follows from Vaser\v{s}te\u\i n's stability for relative $K_1$.
In \S\ref{TorUnital}, we recall some properties of Tor-unital rings, which we need later.
\S\ref{ProAcyclic} is the technical heart of this paper.
In this section, we study triangular spaces and prove a pro acyclicity of the union of triangular spaces, cf.\ Theorem \ref{thm:proacyclic}.
In \S\ref{MainThm}, we complete the proof of Theorem \ref{main}, using the pro acyclicity of triangular spaces.

\subsection{Notation}\label{notation}

\begin{enumerate}[1.]
\item A ring means an associative, not necessarily unital, ring.
\item $\sr(A)$ is the stable range of a ring $A$, i.e.\ the minimum number $r\ge 1$ such that the stable range condition \cite[$(2.2)_n$]{Va69} holds for every $n\ge r$.
\item Let $A$ be a ring and $n\ge 1$.
\begin{enumerate}[(a)]
\item The general linear group $\GL_n(A)$ is the kernel of the canonical map $\GL_n(\mbb{Z}\ltimes A) \to \GL_n(\mbb{Z})$.
\item The elementary subgroup $E_n(A)$ is the subgroup of $\GL_n(A)$ generated by the elementary matrices $e_{ij}(a)$ with $a\in A$ and $1\le i\ne j\le n$.
\end{enumerate}
We regard $\GL_n(A)$ as a subgroup of $\GL_{n+1}(A)$ by sending a matrix $\alpha$ to $\left(\begin{smallmatrix}\alpha & 0 \\0 & 1\end{smallmatrix}\right)$.
We write $\GL(A)=\GL_\infty(A)=\bigcup_n\GL_n(A)$ and $E(A)=E_\infty(A)=\bigcup_nE_n(A)$.
\item A pro ring is a pro system of rings indexed by a filtered poset.
Typically, we denote a pro ring by a bold letter $\mbf{A}=\{A_m\}$ and the structured maps $A_m\to A_n$ by $\iota_{m,n}$ or just by $\iota$.
\item A unital (resp.\ commutative) pro ring is a pro ring which is levelwise unital (resp.\ commutative).
Unless otherwise stated, we use standard operations of rings levelwise for pro rings:
E.g.\ $\GL_n(\mbf{A})=\{\GL_n(A_m)\}_m$, $\Tor^{\mbb{Z}\ltimes\mbf{A}}_*(\mbb{Z},\mbb{Z})=\{\Tor^{\mbb{Z}\ltimes A_m}_*(\mbb{Z},\mbb{Z})\}_m$, etc.
\item A left ideal of a pro ring $\mbf{A}=\{A_m\}_{m\in J}$ is a pro ring $\mbf{B}=\{B_m\}_{m\in J}$ with a level map $\mbf{B}\to\mbf{A}$ which exhibits each $B_m$ as a left ideal of $A_m$.
\end{enumerate}

\subsection{Acknowledgment}

The study of homology pro stability has started when I was working on relative Chern classes with Wataru Kai \cite{IK17}.
I am very grateful to him for many discussions and useful comments.
The majority of this paper was written when I was visiting Moritz Kerz in Universit\"at Regensburg.
I thank him for his kind hospitality and interest in my work.
This work was supported by JSPS KAKENHI Grant Number 16J08843, and by the Program for Leading Graduate Schools, MEXT, Japan.

\section{Pro stability for $K_1$}\label{K1}

\subsection{Vaser\v{s}te\u\i n's stability}

Let $R$ be a unital ring and $A$ a two-sided ideal of $R$.
The normal elementary subgroup $E_n(R,A)$ is the smallest normal subgroup of $E_n(R)$ which contains $E_n(A)$.
We write $E(R,A)=E_\infty(R,A)=\bigcup_nE_n(R,A)$.
By Whitehead's lemma, $E(R,A)$ is a normal subgroup of $\GL(A)$.
We define the relative $K_1$-group $K_1(R,A)$ to be the quotient group $\GL(A)/E(R,A)$.

\begin{theorem}[Vaser\v{s}te\u\i n {\cite{Va69}}]\label{thm:Vaserstein}
The canonical map
\[
	\GL_n(A) \to K_1(R,A)
\]
is surjective for $n\ge\sr(A)$, and the kernel is $E_n(R,A)$ for $n\ge\sr(A)+1$.
\end{theorem}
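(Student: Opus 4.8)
The plan is to establish both assertions by a size-reduction argument, the only non-formal ingredients being the stable range condition $(2.2)_n$ of \cite{Va69} and, for the kernel, a little structure theory of the relative elementary subgroup. For surjectivity I would first note that, as $E(R,A)$ is normal in $\GL(A)$ (Whitehead's lemma) and contains $E(A)$, left or right multiplication by an element of $E(A)$ does not change the class of a matrix in $K_1(R,A)$; so it is enough to show that, for $n\ge\sr(A)$, every $\alpha\in\GL_{n+1}(A)$ becomes block-diagonal $\beta\oplus 1$ with $\beta\in\GL_n(A)$ after left and right multiplication by elements of $E_{n+1}(A)$, since iterating this reduces every class to size $\sr(A)$. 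Given such an $\alpha$, its last column $v=(v_1,\dots,v_n,v_{n+1})^{\mathrm{t}}$, being a column of an invertible matrix, is left-unimodular over $\mbb{Z}\ltimes A$, with $v_i\in A$ for $i\le n$ and $v_{n+1}\in 1+A$. By $(2.2)_n$ there are $t_i\in A$ for which $(v_1+t_1v_{n+1},\dots,v_n+t_nv_{n+1})$ is again left-unimodular; left multiplication by $\prod_{i\le n}e_{i,n+1}(t_i)\in E_{n+1}(A)$ replaces each $v_i$ ($i\le n$) by $v_i+t_iv_{n+1}$ and fixes $v_{n+1}$, so we may assume $\sum_{i\le n}u_iv_i=1$ with $u_i\in\mbb{Z}\ltimes A$.

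Now write $v_{n+1}=1+a$ with $a\in A$. Left multiplication by $\prod_{i\le n}e_{n+1,i}\bigl((1-v_{n+1})u_i\bigr)$ is legitimate since $(1-v_{n+1})u_i=-au_i\in A$, and it turns $v_{n+1}$ into $v_{n+1}+(1-v_{n+1})\sum_{i\le n}u_iv_i=1$ while leaving $v_1,\dots,v_n$ untouched; then left multiplication by $\prod_{i\le n}e_{i,n+1}(-v_i)\in E_{n+1}(A)$ makes $v=e_{n+1}$. Since the resulting matrix still lies in $\GL_{n+1}(A)$, its last row is $(c_1,\dots,c_n,1)$ with $c_i\in A$, and right multiplication by $\prod_{i\le n}e_{n+1,i}(-c_i)\in E_{n+1}(A)$ clears it, leaving $\beta\oplus 1$ with $\beta\in\GL_n(A)$. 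For the kernel, the inclusion $E_n(R,A)\subseteq\ker(\GL_n(A)\to K_1(R,A))$ is immediate; for the reverse, one must show $E_N(R,A)\cap\GL_n(A)=E_n(R,A)$ for $N>n\ge\sr(A)+1$ and iterate down in $N$. I would run a parallel reduction, but now everything has to stay inside the \emph{relative} elementary group: one puts an element of $E_N(R,A)$ that already fixes the $N$-th basis vector into a normal form $E_{N-1}(R,A)\cdot V^+\cdot V^-$, with $V^\pm$ generated by the elementary matrices involving the $N$-th coordinate, and then uses $(2.2)_n$ together with the commutator identities $[e_{ij}(a),e_{jk}(b)]=e_{ik}(ab)$ and the normality of $E_N(R,A)$ under the relevant conjugations to absorb the $V^\pm$-factors into $E_{N-1}(R,A)$. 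The availability of a free extra coordinate is what makes these rewritings possible, which is why the bound $\sr(A)+1$ (rather than $\sr(A)$) appears; the facts about $E_N(R,A)$ that this requires are those of \cite{Va69,Su95}.

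The surjectivity half is, in the end, a bookkeeping exercise that --- as the computation shows --- never even leaves $E_n(A)$. I expect the main obstacle to be the kernel statement: one must control $E_N(R,A)$ precisely enough, via its normal form and its normality under conjugation in the (relative) stable range, to carry the reduction through without ever enlarging it to $E_N(R)$, and the small cases, with $N$ possibly as low as $3$ where the generic commutator calculus is tight, have to be checked separately.
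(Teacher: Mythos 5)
The paper does not actually prove this statement: it is quoted from Vaser\v{s}te\u\i n \cite{Va69} and used as a black box, so there is no proof of record to compare yours against; I can only judge your argument on its own terms, and the surjectivity half contains a genuine error. You apply the stable range condition to the last column $v=(v_1,\dots,v_n,v_{n+1})^{\mathrm{t}}$, with $v_i\in A$ for $i\le n$ and $v_{n+1}\in 1+A$, and claim to find $t_i\in A$ making the shortened row $(v_1+t_1v_{n+1},\dots,v_n+t_nv_{n+1})$ left-unimodular, so that after the corresponding row operations $\sum_{i\le n}u_iv_i=1$. This is impossible: every entry of that shortened row lies in $A$, which is the kernel of the augmentation $\mbb{Z}\ltimes A\to\mbb{Z}$, so every left combination of them lies in $A$ and can never equal $1$. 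The relative stable range condition shortens an $A$-unimodular row by deleting a coordinate that is $\equiv 0\pmod{A}$ while \emph{retaining} the coordinate that is $\equiv 1\pmod{A}$; you have deleted the wrong coordinate. Everything downstream of ``$\sum_{i\le n}u_iv_i=1$'' --- in particular the step that turns $v_{n+1}$ into $1$ --- collapses with it. The repair is not cosmetic: one must keep $v_{n+1}$ in the shortened column, obtain $1\in\sum_{i\ne j,\,i\le n}Rv_i+Rv_{n+1}$ for some deleted index $j$, and use that relation to clear $v_j$ and then normalize; this is exactly the content of Vaser\v{s}te\u\i n's transitivity of $E_{n+1}(R,A)$ on $A$-unimodular columns, and it is where the real work lives.

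The kernel half is an outline rather than a proof: the normal form $E_N(R,A)\subseteq E_{N-1}(R,A)\cdot V^+\cdot V^-$ and the absorption of the $V^\pm$-factors under the bound $n\ge\sr(A)+1$ constitute the hard part of Vaser\v{s}te\u\i n's injective stability theorem, and you assert them with a pointer to \cite{Va69,Su95} rather than establishing them. Since the paper itself treats the whole theorem as a citation, deferring to \cite{Va69} is defensible as a matter of exposition --- but then the surjectivity half should be deferred too, not rederived with a broken application of $(2.2)_n$.
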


\subsection{}

Let $R$ be a unital ring and $A$ a two-sided ideal of $R$.
The following lemma generalizes \cite[??]{Ti76} for noncommutative rings.
\begin{lemma}\label{lem:Tits}
For $n\ge 3$, $E_n(R,A^2)\subset [E_n(A),E_n(A)]$.
\end{lemma}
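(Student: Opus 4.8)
The plan is to express an arbitrary generator $e_{ij}(ab)$ of $E_n(R,A^2)$ — where $a,b\in A$ — as a product of commutators of elementary matrices with entries in $A$, and then to check that this description is stable under conjugation by $E_n(R)$, so that it descends to the whole normal subgroup $E_n(R,A^2)$. The starting point is the Steinberg-type identity
\[
	e_{ij}(ab) = [e_{ik}(a),e_{kj}(b)]
\]
valid for three distinct indices $i,j,k$, which requires $n\ge 3$. Since $a,b\in A$, both $e_{ik}(a)$ and $e_{kj}(b)$ lie in $E_n(A)$, so $e_{ij}(ab)\in [E_n(A),E_n(A)]$ immediately. Thus $E_n(A^2)\subset [E_n(A),E_n(A)]$ with no further work; the real content is upgrading $E_n(A^2)$ to the \emph{normal} subgroup $E_n(R,A^2)$.

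For the normality upgrade, recall that $E_n(R,A^2)$ is generated as a group by elements of the form $\gamma\, e_{ij}(c)\,\gamma^{-1}$ with $\gamma\in E_n(R)$ and $c\in A^2$; it suffices to treat $c=ab$ with $a,b\in A$ and a single generator $\gamma=e_{pq}(r)$, $r\in R$. Conjugating the identity above gives
\[
	\gamma\,e_{ij}(ab)\,\gamma^{-1} = [\gamma e_{ik}(a)\gamma^{-1},\ \gamma e_{kj}(b)\gamma^{-1}],
\]
and here we have the freedom to choose $k$ distinct from $i,j,p,q$ (possible since $n\ge 3$ forces $n\ge 4$ only if all four of $i,j,p,q$ are distinct — more carefully, when $n=3$ one checks the few index collision cases by hand, using that $\gamma e_{ik}(a)\gamma^{-1}$ is a product of $e_{ik}(a)$ with terms like $e_{pk}(\pm ra)$ or $e_{iq}(\pm ar)$, whose entries $ra$, $ar$ lie in $A$ because $A$ is a two-sided ideal). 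Either way, the two conjugated factors are products of elementary matrices all of whose off-diagonal entries lie in $A$, hence they lie in $E_n(A)$, and their commutator lies in $[E_n(A),E_n(A)]$. Since $[E_n(A),E_n(A)]$ is a subgroup, the product over all generators of $E_n(R,A^2)$ stays inside it, giving $E_n(R,A^2)\subset [E_n(A),E_n(A)]$.

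The main obstacle is purely bookkeeping: handling the index collisions in the conjugation step when $n=3$, where one cannot always choose the auxiliary index $k$ freely. I expect this to be dispatched by the standard commutator calculus for elementary matrices — the relations $e_{ij}(x)e_{ij}(y)=e_{ij}(x+y)$, $[e_{ij}(x),e_{jk}(y)]=e_{ik}(xy)$ for $i\ne k$, and $[e_{ij}(x),e_{kl}(y)]=1$ for $j\ne k$, $i\ne l$ — together with the observation that multiplying an ideal element of $A$ by a ring element of $R$ on either side lands back in $A$. No noncommutativity issue arises beyond keeping track of the order of $a$ and $b$, which is exactly why the statement is phrased with $A^2$ rather than with a symmetric product.
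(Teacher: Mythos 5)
Your first step --- $E_n(A^2)\subset[E_n(A),E_n(A)]$ for $n\ge 3$ via $e_{ij}(ab)=[e_{ik}(a),e_{kj}(b)]$ --- is correct and is also the paper's starting observation. The gap is in the ``normality upgrade''. The group $E_n(R,A^2)$ is generated by $\gamma\,e_{ij}(c)\,\gamma^{-1}$ for \emph{arbitrary} $\gamma\in E_n(R)$, and your claim that ``it suffices to treat a single generator $\gamma=e_{pq}(r)$'' is unjustified. To pass from single generators to arbitrary products $\gamma=\gamma_1\gamma_2\dotsm$ by induction on word length, you would need to know that the target $[E_n(A),E_n(A)]$ is stable under conjugation by each $e_{pq}(r)$, i.e.\ normal in $E_n(R)$. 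But an element of $[E_n(A),E_n(A)]$ is a product of commutators $[u,v]$ with $u,v\in E_n(A)$ arbitrary words in elementary matrices over $A$, and $\gamma u\gamma^{-1}\in E_n(A)$ would require $E_n(A)$ to be normal in $E_n(R)$ --- which is false in general and is precisely why $E_n(R,A)$ is defined as a normal closure. So your argument only bounds the subgroup generated by single-generator conjugates of $E_n(A^2)$, which need not be all of $E_n(R,A^2)$.

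There is a second, smaller problem already at the single-generator level: the collision case you defer to ``bookkeeping'' is genuinely not dispatched by the standard relations. If $\gamma=e_{ki}(r)$ conjugates the factor $e_{ik}(a)$ (indices transposed), the identity $[e_{ij}(x),e_{kl}(y)]$ gives no information (both exceptional conditions $j=k$ and $i=l$ hold), and the conjugate $e_{ki}(r)e_{ik}(a)e_{ki}(-r)=1+\left(\begin{smallmatrix}-ar & a\\ -rar & ra\end{smallmatrix}\right)$ is an element of $\GL_n(A)$ that is not visibly a product of elementary matrices over $A$; when $n=3$ you cannot move the auxiliary index $k$ out of the way. The paper's proof is built around exactly these two obstacles: it first rewrites an arbitrary $\gamma\in E_n(R)$ in a normal form as an alternating word in the blocks $X_j,X^j$, then runs a double induction (the assertions $(\heartsuit)_N$ and $(\diamondsuit)_Q$) on the length of that word, at each stage re-expressing the conjugate so that only conjugators of the form $e_{ij}(r),e_{ji}(r)$ remain and the commutator $e_{ji}(ab)=[e_{jt}(a),e_{ti}(b)]$ can be taken with an auxiliary index $t\ne i,j$, yielding the key identity $e_{ij}(r_1)e_{ji}(ab)e_{ij}(-r_1)=[e_{it}(r_1a)e_{jt}(a),\,e_{tj}(-br_1)e_{ti}(b)]$ with both entries in $E_n(A)$. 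That machinery is what your proposal is missing, and it does not appear to be avoidable by choosing indices more carefully.
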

\begin{proof}
Note the standard equality of elementary matrices;
\[
	[e_{ij}(a),e_{kl}(b)] = \begin{cases} 1 &\text{if } j\ne k, i\ne l \\ e_{il}(ab) &\text{if } j=k, i\ne l \\ e_{kj}(-ba) &\text{if }j\ne k, i=l, \end{cases}
\]
which we use throughout the proof.
One immediate consequence is that $E_n(A^2)\subset [E_n(A),E_n(A)]$ for $n\ge 3$.

For $r=(r_1,\dotsc,r_n)\in R^n$ with $r_j=1$, we write
\[
	X_j(r):= \prod_{k\ne j} e_{jk}(r_k) \quad \text{and} \quad X^j(r):=\prod_{k\ne j} e_{kj}(r_k).
\]
Fix $1\le j\le n$.
It is easy to see that every $x\in E_n(R)$ has the form
\[
	x_{2m}(U) := X^j(u_{2m})X_j(u_{2m-1})\dotsm X^j(u_2)X_j(u_1)
\]
for some $m>0$ and $U=(u_1,u_2\dotsc,u_{2m})\in (R^n)^{2m}$.
We also set $x_0(\emptyset):=1$ and 
\[
	x_{2m-1}(V) := X^j(v_{2m-1})X_j(v_{2m-2})\dotsm X_j(v_2)X^j(v_1)
\]
for $m>0$ and $V=(v_1,v_2\dotsc,v_{2m-1})\in (R^n)^{2m-1}$.

Consider the following assertion.
\begin{description}
\item[$(\heartsuit)_N$] For every $U\in (R^n)^N$, $x_N(U) E_n(A^2) x_N(U)^{-1} \subset [E_n(A),E_n(A)]$.
\end{description}
We have seen $(\heartsuit)_0$.
Let $N>0$ and suppose that $(\heartsuit)_l$ holds for $l<N$.
We shall prove $(\heartsuit)_N$ in case $N$ even; the case $N$ odd is proved in the same way.

Let $U=(u_1,\dotsc,u_N)\in (R^n)^N$ and $x:=x_N(U)$.
For $e_{ik}(a)$ with $a\in A^2$, $1\le i,k\le n$ and $k\ne j$, we have $X_j(u_1)e_{ik}(a)X_j(-u_1)\in E_n(A^2)$ and thus by the induction hypothesis $xe_{ik}(a)x^{-1}\in [E_n(A),E_n(A)]$.
For $e_{ij}(a)$ with $a\in A^2$ and $1\le i\ne j\le n$, we have
\[
\begin{split}
	X_j(u_1)e_{ij}(a)X_j(-u_1) &= e_{ji}(u_{1,i}) \Bigl( \prod_{k\ne i,j} e_{ik}(-a u_{1,k}) \cdot e_{ij}(a) \Bigr) e_{ji}(-u_{1,i}) \\
							   &= \prod_{k\ne i,j} e_{jk}(-u_{1,i} a u_{1,k}) e_{ik}(-a u_{1,k}) \cdot e_{ji}(u_{1,i}) e_{ij}(a) e_{ji}(-u_{1,i}).
\end{split}
\]
Hence, it follows from the induction hypothesis that $xE_n(A^2)x^{-1}$ is generated by $y_ie_{ij}(a)y_i^{-1}$, $y_i= X^j(u_N)X_j(u_{N-1})\dotsm X^j(u_2)e_{ji}(u_{1,i})$, with $a\in A^2$ and $1\le i\ne j\le n$ modulo $[E_n(A),E_n(A)]$.

For $U=(u_1,\dotsc,u_N)\in (R^n)^N$ and $1\le p\le N/2$, we set
\begin{gather*}
	y_i^{2p-1}(U): = X^j(u_N)X_j(u_{N-1})\dotsm X^j(u_{2p}) e_{ji}(u_{2p-1,i})\dotsm e_{ij}(u_{2,i})e_{ji}(u_{1,i}) \\
	y_i^{2p}(U): = X^j(u_N)X_j(u_{N-1})\dotsm X_j(u_{2p+1}) e_{ij}(u_{2p,i})\dotsm e_{ij}(u_{2,i})e_{ji}(u_{1,i}).
\end{gather*}
We claim that:
\begin{description}
\item[$(\diamondsuit)_Q$] For $U\in (R^n)^N$, $x_N(U)E_n(A^2)x_N(U)^{-1}$ is generated by $y_i^Q(U)e_{ij}(a) y_i^Q(U)^{-1}$, $a\in A^2$, $1\le i\ne j\le n$ modulo $[E_n(A),E_n(A)]$.
\end{description}
We have seen $(\diamondsuit)_1$.
Let $Q>1$ and suppose that $(\diamondsuit)_l$ holds for $l<Q$.
We prove $(\diamondsuit)_Q$ in case $Q$ even; the case $Q$ odd is proved in the same way.

Let $U=(u_1,\dotsc,u_N)\in (R^n)^N$.
According to $(\diamondsuit)_{Q-1}$, $x_N(U)E_n(A^2)x_N(U)^{-1}$ is generated by $y_i^{Q-1}(U)e_{ij}(a)y_i^{Q-1}(U)^{-1}$, $a\in A^2$, $1\le i\ne j\le n$ modulo $[E_n(A),E_n(A)]$.
We fix $1\le i\ne j\le n$ for a moment.
Now,
\[
	X^j(u_{Q})e_{ji}(u_{Q-1,i}) = e_{ij}(u_{Q,i}) e_{ji}(u_{Q-1,i}) \prod_{k\ne i,j} e_{kj}(u_{Q,k})e_{ki}(u_{Q,k}u_{Q-1,i}).
\]
Hence, by putting $\tilde{y}:=\prod_{k\ne i,j}e_{ki}(u_{2p,k}u_{2p-1,i})$, we have
\[
	y_i^{Q-1}(U) = X^j(u_N)X_j(u_{N-1})\dotsm X_j(u_{Q+1})e_{ij}(u_{Q,i})e_{ji}(u_{Q-1,i})X^j(u_{Q-2}')\dotsm X^j(u_2')X_j(u_1')\tilde{y}
\]
for some $u_1',\dotsc,u_{Q-2}'\in R^n$ with $u'_{q,i}=u_{q,i}$.
For $Q-1\le q\le N$, we set
\[
	u_q':= \begin{cases} u_{q,i}e_i+e_j &\text{if }q=Q-1,Q \\
						 u_q 			&\text{if }q>Q \end{cases}
\]
and $U':=(u_1',\dotsc,u_N')$, so that $y_i^{Q-1}(U)=x_N(U')\tilde{y}$ and $y_i^q(U')=y_i^q(U)$ for $q\ge Q$.
By applying $(\diamondsuit)_{Q-1}$ for $U'$, we see that $x_N(U')E_n(A^2)x_N(U')^{-1}$ is generated by $y_i^Q(U')e_{ij}(a)y_i^Q(U')^{-1}$, $a\in A^2$ modulo $[E_n(A),E_n(A)]$.
Varying $i$, this proves $(\diamondsuit)_Q$ for the given $U\in (R^n)^N$, and thus for all $U\in (R^n)^N$.

According to $(\diamondsuit)_N$, to prove $(\heartsuit)_N$, it suffices to show that $ye_{ij}(ab)y^{-1}\in [E_n(A),E_n(A)]$ for $y= e_{ij}(r_N)e_{ji}(r_{N-1}) \dotsm e_{ij}(r_2)e_{ji}(r_1)$ with $a,b\in A$, $r_1,\dotsc,r_N\in R$ and $1\le i\ne j\le n$.
Observe that we have
\[
\begin{split}
	e_{ij}(r_1)e_{ji}(ab)e_{ij}(-r_1) &= e_{ij}(r_1) [e_{jt}(a),e_{ti}(b)] e_{ij}(-r_1) \\
									  &= [e_{it}(r_1a)e_{jt}(a),e_{tj}(-br_1)e_{ti}(b)]
\end{split}
\]
for $t\ne i,j$.
Now, it is clear that $y'[e_{it}(r_1a)e_{jt}(a),e_{tj}(-br_1)e_{ti}(b)](y')^{-1}\in [E_n(A),E_n(A)]$ for $y'=e_{ij}(r_N) e_{ji}(r_{N-1}) \dotsm e_{ij}(r_2)$, and thus we get $(\heartsuit)_N$.
\end{proof}

\begin{corollary}\label{cor:Tits}
Let $\mbf{R}=\{R_m\}$ be a unital pro ring and $\mbf{A}=\{A_m\}$ a two-sided ideal of $\mbf{R}$.
Suppose that $\mbf{A}/\mbf{A}^2=\{A_m/A_m^2\}=0$.
Then, for $3\le n\le \infty$, the canonical maps
\[
\xymatrix{
	E_n(\mbf{A}) \ar[r]^-\simeq 							 & E_n(\mbf{R},\mbf{A}) \\
	[E_n(\mbf{A}),E_n(\mbf{A})] \ar[u]^\simeq \ar[r]^-\simeq & [E_n(\mbf{R},\mbf{A}),E_n(\mbf{R},\mbf{A})] \ar[u]^\simeq
}
\]
are pro isomorphisms.
\end{corollary}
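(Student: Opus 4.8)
The plan is to deduce all four pro-isomorphism assertions from a single application of Lemma~\ref{lem:Tits}: once a structure map lets us pass from $A$ to $A^2$, the whole relative elementary group $E_n(R,A)$ falls inside the commutator subgroup $[E_n(A),E_n(A)]$ of the honest elementary group. First I would settle injectivity once and for all. Via the inclusion $A_m\hookrightarrow R_m$ we regard $E_n(A_m)$ as a subgroup of $E_n(R_m)$, so that $[E_n(A_m),E_n(A_m)]\subseteq E_n(A_m)\subseteq E_n(R_m,A_m)$ and $[E_n(A_m),E_n(A_m)]\subseteq[E_n(R_m,A_m),E_n(R_m,A_m)]\subseteq E_n(R_m,A_m)$; thus all four maps in the square are levelwise inclusions of subgroups of $\GL_n(R_m)$, hence pro injective. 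It therefore remains only to prove that each is a pro epimorphism, and for a subgroup inclusion this amounts to showing that for every index $m$ some structure map carries the larger group at the new level into the smaller group at level $m$.

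Everything then reduces to the single claim $(\star)$: for each index $m$ there is a structure map $\iota$ on $\mbf R$, restricting to one on $\mbf A$, with $\iota\bigl(E_n(R_{m'},A_{m'})\bigr)\subseteq[E_n(A_m),E_n(A_m)]$ for all $3\le n\le\infty$. Granting $(\star)$, the four pro-epimorphism statements follow at once from the inclusions above: for $E_n(\mbf A)\to E_n(\mbf R,\mbf A)$ one has $\iota(E_n(R_{m'},A_{m'}))\subseteq[E_n(A_m),E_n(A_m)]\subseteq E_n(A_m)$; for $[E_n(\mbf A),E_n(\mbf A)]\to E_n(\mbf A)$ one uses $E_n(A_{m'})\subseteq E_n(R_{m'},A_{m'})$ and $(\star)$; for $[E_n(\mbf R,\mbf A),E_n(\mbf R,\mbf A)]\to E_n(\mbf R,\mbf A)$ one uses $(\star)$ and $[E_n(A_m),E_n(A_m)]\subseteq[E_n(R_m,A_m),E_n(R_m,A_m)]$; and for $[E_n(\mbf A),E_n(\mbf A)]\to[E_n(\mbf R,\mbf A),E_n(\mbf R,\mbf A)]$ one uses $[E_n(R_{m'},A_{m'}),E_n(R_{m'},A_{m'})]\subseteq E_n(R_{m'},A_{m'})$ and $(\star)$.

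To prove $(\star)$ I would use the hypothesis $\{A_m/A_m^2\}=0$ to choose, for a given $m$, an index $m'$ with $\iota_{m',m}(A_{m'})\subseteq A_m^2$. The structure map $\iota\colon R_{m'}\to R_m$ is a unital ring homomorphism sending $A_{m'}$ into $A_m^2$, so the induced homomorphism $\GL_n(R_{m'})\to\GL_n(R_m)$ carries each generator $g\,e_{ij}(a)\,g^{-1}$ of $E_n(R_{m'},A_{m'})$ (with $g\in E_n(R_{m'})$, $a\in A_{m'}$, $i\ne j$) to the generator $\iota(g)\,e_{ij}(\iota a)\,\iota(g)^{-1}$ of $E_n(R_m,A_m^2)$; hence $\iota(E_n(R_{m'},A_{m'}))\subseteq E_n(R_m,A_m^2)$, and Lemma~\ref{lem:Tits} gives $E_n(R_m,A_m^2)\subseteq[E_n(A_m),E_n(A_m)]$ for finite $n\ge 3$, the case $n=\infty$ following by passing to unions since every element of $E_\infty(R_m,A_m^2)$ lies in some $E_N(R_m,A_m^2)$ with $N\ge 3$ finite. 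I do not anticipate a genuine obstacle here: Lemma~\ref{lem:Tits} does all the real work, and the only points needing (routine) care are the passage to $n=\infty$ and the bookkeeping that makes the four maps honest subgroup inclusions, hence automatically pro injective.
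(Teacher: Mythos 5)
Your proposal is correct and follows essentially the same route as the paper: reduce to pro-surjectivity via the observation that all four maps are levelwise subgroup inclusions, then use $\{A_m/A_m^2\}=0$ to find a structure map with $\iota(A_{m'})\subseteq A_m^2$, so that $\iota(E_n(R_{m'},A_{m'}))\subseteq E_n(R_m,A_m^2)\subseteq[E_n(A_m),E_n(A_m)]$ by Lemma~\ref{lem:Tits}. The paper phrases this as the single statement that $[E_n(\mbf{A}),E_n(\mbf{A})]\to E_n(\mbf{R},\mbf{A})$ is a pro epimorphism (from which the other three follow formally), but your claim $(\star)$ is the identical key inclusion.
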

\begin{proof}
Since all the indicated maps are injections, it suffices to show that the map $[E_n(\mbf{A}),E_n(\mbf{A})]\to E_n(\mbf{R},\mbf{A})$ is a pro epimorphism.
By the assumption $\mbf{A}/\mbf{A}^2=0$, there exists $s\ge m$ for each $m$ such that $\iota_{s,m}(A_s)\subset A_m^2$.
Therefore,
\[
	\iota_{s,m}E_n(R_s,A_s) \subset E_n(R_m,A_m^2) \subset [E_n(R_m,A_m),E_n(R_m,A_m)],
\]
where the last inclusion is by Lemma \ref{lem:Tits}.
This proves that $[E_n(\mbf{A}),E_n(\mbf{A})]\to E_n(\mbf{R},\mbf{A})$ is a pro epimorphism.
\end{proof}

\subsection{Pro excision and pro stability}

Let $\mbf{R}=\{R_m\}$ be a unital pro ring and $\mbf{A}=\{A_m\}$ a two-sided ideal of $\mbf{R}$.
We define $\sr(\mbf{A}):=\max_m(\sr(A_m))$.

\begin{theorem}[Pro exision]\label{thm:excisionK1}
Suppose that $\mbf{A}/\mbf{A}^2=0$.
Then the canonical map
\[
	H_1(\GL(\mbf{A})) \xrightarrow{\sim} K_1(\mbf{R},\mbf{A})
\]
is a pro isomorphism.
\end{theorem}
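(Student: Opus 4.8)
The plan is to follow the classical recipe that relates relative $K_1$ to the first homology of $\GL$, but run it entirely in the pro category. First I would recall the relative version of Whitehead's lemma: for a unital ring $R$ and a two-sided ideal $A$, the commutator subgroup $[\GL(R),\GL(A)]$ equals $E(R,A)$, hence $H_1(\GL(A)) = \GL(A)/[\GL(A),\GL(A)]$ surjects onto $K_1(R,A) = \GL(A)/E(R,A)$ with kernel $E(R,A)/[\GL(A),\GL(A)]$. Applying this levelwise to $\mbf{R}$ and $\mbf{A}$, the map $H_1(\GL(\mbf{A}))\to K_1(\mbf{R},\mbf{A})$ is a levelwise epimorphism, so it is a pro epimorphism; the content is that the pro system of kernels $\{E(R_m,A_m)/[\GL(A_m),\GL(A_m)]\}_m$ vanishes. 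Equivalently, I must show that for each $m$ there is $s\ge m$ with $\iota_{s,m}\bigl(E(R_s,A_s)\bigr)\subset [\GL(A_m),\GL(A_m)]$.

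The key input is Corollary \ref{cor:Tits}. Since $\mbf{A}/\mbf{A}^2=0$, that corollary gives, for $n=\infty$, that $[E(\mbf{A}),E(\mbf{A})]\to E(\mbf{R},\mbf{A})$ is a pro isomorphism; in particular, unwinding the proof of the corollary, for each $m$ there is $s\ge m$ with
\[
	\iota_{s,m}\bigl(E(R_s,A_s)\bigr)\subset [E(R_m,A_m),E(R_m,A_m)] = [E(\mbf{A}),E(\mbf{A})]_m
\]
(using $E(R_m,A_m^2)\subset[E(R_m,A_m),E(R_m,A_m)]$ from Lemma \ref{lem:Tits} after choosing $s$ with $\iota_{s,m}(A_s)\subset A_m^2$). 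Since $E(A_m)\subset\GL(A_m)$, we get $[E(R_m,A_m),E(R_m,A_m)]\subset [\GL(A_m),\GL(A_m)]$, so $\iota_{s,m}\bigl(E(R_s,A_s)\bigr)\subset[\GL(A_m),\GL(A_m)]$, which is exactly the vanishing of the kernel pro system. Combined with the pro epimorphism established above, this shows $H_1(\GL(\mbf{A}))\to K_1(\mbf{R},\mbf{A})$ is a pro isomorphism.

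The main obstacle is really just the bookkeeping of the relative Whitehead lemma in the non-unital, pro setting: one must be careful that $\GL(A_m)$ is defined as the kernel of $\GL(\mbb{Z}\ltimes A_m)\to\GL(\mbb{Z})$ rather than as a matrix group over a unital ring, so the classical commutator identities proving $[\GL(R),\GL(A)]=E(R,A)$ should be applied with $R$ replaced by (a suitable unital ring containing $A_m$, and then transported along $\mbb{Z}\ltimes A_m$). Once the identification $H_1(\GL(A_m))=\GL(A_m)/E(R_m,A_m)\oplus$-nothing — i.e.\ the exact sequence $E(R_m,A_m)/[\GL(A_m),\GL(A_m)]\to H_1(\GL(A_m))\to K_1(R_m,A_m)\to 0$ — is in place, everything else is a formal consequence of Corollary \ref{cor:Tits}. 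No finiteness of the stable range is needed here; the hypothesis $\mbf{A}/\mbf{A}^2=0$ does all the work, which is precisely why this pro excision statement is cleaner than its classical counterpart.
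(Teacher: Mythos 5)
Your proposal is correct and follows essentially the same route as the paper: the paper writes down the levelwise exact sequence $H_1(E(\mbf{R},\mbf{A}))\to H_1(\GL(\mbf{A}))\to K_1(\mbf{R},\mbf{A})\to 0$ coming from the extension $1\to E(\mbf{R},\mbf{A})\to\GL(\mbf{A})\to K_1(\mbf{R},\mbf{A})\to 1$ and kills the left-hand term by Corollary \ref{cor:Tits}, which is exactly your identification of the kernel as $\{E(R_m,A_m)/[\GL(A_m),\GL(A_m)]\}_m$ together with the containment $\iota_{s,m}(E(R_s,A_s))\subset[E(A_m),E(A_m)]\subset[\GL(A_m),\GL(A_m)]$ extracted from Lemma \ref{lem:Tits}. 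The only cosmetic difference is that you unwind the five-term exact sequence into explicit commutator-subgroup inclusions; the key input and the structure of the argument are identical.
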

\begin{proof}
Since $K_1(\mbf{R},\mbf{A})$ is levelwise abelian, we have a levelwise exact sequence
\[
\xymatrix@1{
	H_1(E(\mbf{R},\mbf{A})) \ar[r] & H_1(\GL(\mbf{A})) \ar[r] & K_1(\mbf{R},\mbf{A}) \ar[r] & 0.
}
\]
According to Corollary \ref{cor:Tits}, $H_1(E(\mbf{R},\mbf{A}))=0$, and thus we get $H_1(\GL(\mbf{A})) \simeq K_1(\mbf{R},\mbf{A})$.
\end{proof}

\begin{theorem}[Pro stability]\label{thm:stabilityK1}
Suppose that $\mbf{A}/\mbf{A}^2=0$.
Then the canonical map
\[
	H_1(\GL_n(\mbf{A})) \to H_1(\GL(\mbf{A}))
\]
is a pro epimorphism for $n\ge\sr(\mbf{A})$ and a pro isomorphism for $n\ge\max(3,\sr(\mbf{A})+1)$.
\end{theorem}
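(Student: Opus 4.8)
The strategy is to compare both sides with the relative $K_1$-group, combining Vaser\v{s}te\u\i n's stability (Theorem \ref{thm:Vaserstein}) with pro excision (Theorem \ref{thm:excisionK1}). Fix $n$. For each level $m$ the group $K_1(R_m,A_m)$ is abelian, so the canonical surjection $\GL_n(A_m)\to K_1(R_m,A_m)$ factors through the abelianization $H_1(\GL_n(A_m))$, and these factorizations are natural in $m$. We thus obtain a natural pro map $\varphi\colon H_1(\GL_n(\mbf{A}))\to K_1(\mbf{R},\mbf{A})$ which factors as
\[
	H_1(\GL_n(\mbf{A}))\longrightarrow H_1(\GL(\mbf{A}))\xrightarrow{\ \sim\ }K_1(\mbf{R},\mbf{A}),
\]
the second arrow being the pro isomorphism of Theorem \ref{thm:excisionK1} (which applies since $\mbf{A}/\mbf{A}^2=0$). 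Hence $H_1(\GL_n(\mbf{A}))\to H_1(\GL(\mbf{A}))$ is a pro epimorphism (resp.\ a pro isomorphism) if and only if $\varphi$ is, and it suffices to treat $\varphi$.

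\textit{Pro epimorphism for $n\ge\sr(\mbf{A})$.} Then $n\ge\sr(A_m)$ for every $m$, so Theorem \ref{thm:Vaserstein} makes $\GL_n(A_m)\to K_1(R_m,A_m)$ surjective, hence also $H_1(\GL_n(A_m))\to K_1(R_m,A_m)$ surjective. Thus $\varphi$ is a levelwise, in particular a pro, epimorphism.

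\textit{Pro isomorphism for $n\ge\max(3,\sr(\mbf{A})+1)$.} By the previous step it remains to show that $\varphi$ is a pro monomorphism. Since $n\ge\sr(A_m)+1$, Theorem \ref{thm:Vaserstein} gives for each $m$ a short exact sequence of groups $1\to E_n(R_m,A_m)\to\GL_n(A_m)\to K_1(R_m,A_m)\to1$. As abelianization is right exact and $K_1(R_m,A_m)$ is already abelian, the kernel of $H_1(\GL_n(A_m))\to K_1(R_m,A_m)$ equals the image of $E_n(R_m,A_m)$ under $\GL_n(A_m)\to H_1(\GL_n(A_m))$. It remains to check that this pro system of kernels vanishes. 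Since $\mbf{A}/\mbf{A}^2=0$, for each $m$ there is $s\ge m$ with $\iota_{s,m}(A_s)\subset A_m^2$, and then by Lemma \ref{lem:Tits} (recall $n\ge3$)
\[
	\iota_{s,m}E_n(R_s,A_s)\subset E_n(R_m,A_m^2)\subset[E_n(A_m),E_n(A_m)]\subset[\GL_n(A_m),\GL_n(A_m)],
\]
so the transition map from level $s$ to level $m$ annihilates the kernel at level $s$. Hence $\varphi$ is a pro monomorphism, so a pro isomorphism, which finishes the proof.

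Given the earlier results the argument is essentially formal; the only point that requires care is that the kernels in the last step need only be \emph{pro} trivial rather than trivial, and this is exactly the estimate already exploited in the proof of Corollary \ref{cor:Tits}. In other words, all the substantive content is carried by Lemma \ref{lem:Tits} and Vaser\v{s}te\u\i n's stability.
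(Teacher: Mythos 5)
Your proof is correct and follows essentially the same route as the paper: both reduce the statement to $K_1(\mbf{R},\mbf{A})$ via Theorem \ref{thm:excisionK1}, use Theorem \ref{thm:Vaserstein} to identify the relevant kernel with (the image of) $E_n(\mbf{R},\mbf{A})$, and kill it pro-wise using Lemma \ref{lem:Tits} together with $\mbf{A}/\mbf{A}^2=0$. The only cosmetic difference is that you invoke Lemma \ref{lem:Tits} directly on the kernel of the abelianization map, whereas the paper routes the same input through Corollary \ref{cor:Tits} and a comparison of five-term exact sequences.
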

\begin{proof}
The composite
\[
	H_1(\GL_n(\mbf{A})) \to H_1(\GL(\mbf{A})) \xrightarrow{\sim} K_1(\mbf{R},\mbf{A})
\]
is a levelwise surjection for $n\ge\sr(\mbf{A})$ by Theorem \ref{thm:Vaserstein}.
Since the last map is a pro isomorphism by Theorem \ref{thm:excisionK1}, the first map is a pro epimorphism for $n\ge\sr(\mbf{A})$.

Consider the commutative diagram
\[
\xymatrix{
	H_1(E_n(\mbf{R},\mbf{A})) \ar[r] \ar[d] & H_1(\GL_n(\mbf{A})) \ar[r] \ar[d] & H_1(\GL_n(\mbf{R},\mbf{A})/E_n(\mbf{R},\mbf{A})) \ar[r] \ar[d] & 0 \\
	H_1(E(\mbf{R},\mbf{A})) \ar[r] 			& H_1(\GL(\mbf{A})) \ar[r] 			& H_1(K_1(\mbf{R},\mbf{A})) \ar[r] & 0
}
\]
with levelwise exact rows.
The left terms are zero for $n\ge 3$ by Corollary \ref{cor:Tits}.
According to Theorem \ref{thm:Vaserstein}, the right vertical map is a levelwise bijection for $n\ge\sr(\mbf{A})+1$.
Hence, the middle term is a pro isomorphism for $n\ge\max(3,\sr(\mbf{A})+1)$.
\end{proof}

\begin{theorem}\label{thm2:stabilityK1}
Set $\bar{E}_n(\mbf{A}):=\GL_n(\mbf{A})\cap E(\mbf{A})$.
Suppose that $\mbf{A}/\mbf{A}^2=0$.
Then the canonical map
\[
	E_n(\mbf{A}) \to \bar{E}_n(\mbf{A})
\]
is a pro isomorphism for $n\ge\max(3,\sr(\mbf{A})+1)$.
\end{theorem}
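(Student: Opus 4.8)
The plan is to realize $\bar E_n(\mbf{A})$ as a level subgroup squeezed between $E_n(\mbf{A})$ and the relative elementary subgroup $E_n(\mbf{R},\mbf{A})$ of a suitable unital pro ring, and then to collapse the outer two terms using Corollary~\ref{cor:Tits}. Take $\mbf{R}=\mbb{Z}\ltimes\mbf{A}=\{\mbb{Z}\ltimes A_m\}$, so that $\mbf{A}$ is a two-sided ideal of $\mbf{R}$ and $\GL_n(\mbf{A})$ is exactly the relative general linear group of the pair $(\mbf{R},\mbf{A})$. Since $E_n(A_m)\to\bar E_n(A_m)$ is an inclusion at every level, the map $E_n(\mbf{A})\to\bar E_n(\mbf{A})$ is automatically a pro monomorphism; it remains to show that it is a pro epimorphism, i.e.\ that for each index $m$ there exists $s\ge m$ with $\iota_{s,m}\bigl(\bar E_n(A_s)\bigr)\subseteq E_n(A_m)$.

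For the squeeze, first observe that $\bar E_n(A_m)=\GL_n(A_m)\cap E(A_m)\subseteq\GL_n(A_m)\cap E(R_m,A_m)$, and that this last group is precisely the kernel of the canonical map $\GL_n(A_m)\to K_1(R_m,A_m)=\GL(A_m)/E(R_m,A_m)$. By Theorem~\ref{thm:Vaserstein} this kernel equals $E_n(R_m,A_m)$ once $n\ge\sr(A_m)+1$, hence at every level as soon as $n\ge\sr(\mbf{A})+1$; thus we have level inclusions $E_n(\mbf{A})\subseteq\bar E_n(\mbf{A})\subseteq E_n(\mbf{R},\mbf{A})$ for such $n$. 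On the other hand, $\mbf{A}/\mbf{A}^2=0$, so Corollary~\ref{cor:Tits} applies to $(\mbf{R},\mbf{A})$ and tells us that $E_n(\mbf{A})\to E_n(\mbf{R},\mbf{A})$ is a pro isomorphism for $3\le n\le\infty$; being in particular a pro epimorphism, for each $m$ there is $s\ge m$ with $\iota_{s,m}\bigl(E_n(R_s,A_s)\bigr)\subseteq E_n(A_m)$. Feeding this $s$ into the inclusion of the previous sentence (valid at level $s$ because $n\ge\sr(\mbf{A})+1\ge\sr(A_s)+1$) gives
\[
	\iota_{s,m}\bigl(\bar E_n(A_s)\bigr)\ \subseteq\ \iota_{s,m}\bigl(E_n(R_s,A_s)\bigr)\ \subseteq\ E_n(A_m),
\]
which is the desired pro surjectivity, valid for $n\ge\max(3,\sr(\mbf{A})+1)$.

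I do not expect a serious obstacle: the statement falls out by sandwiching and applying results already proved. The points that need care are the choice of the ambient unital pro ring $\mbb{Z}\ltimes\mbf{A}$ — so that $\GL_n(\mbf{A})$ is genuinely a relative group and both Theorem~\ref{thm:Vaserstein} and Corollary~\ref{cor:Tits} apply — the bookkeeping of the reindexing $s\ge m$ extracted from the pro epimorphism in Corollary~\ref{cor:Tits}, and the elementary observation that a level map of pro groups that is simultaneously levelwise injective and a pro epimorphism is a pro isomorphism.
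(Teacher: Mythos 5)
Your proof is correct and follows essentially the same route as the paper: sandwich $\bar E_n(\mbf{A})$ between $E_n(\mbf{A})$ and $E_n(\mbf{R},\mbf{A})=\GL_n(\mbf{A})\cap E(\mbf{R},\mbf{A})$ (the latter identification being Theorem~\ref{thm:Vaserstein} for $n\ge\sr(\mbf{A})+1$) and collapse the outer terms via Corollary~\ref{cor:Tits}. The only difference is that you make explicit the choice $\mbf{R}=\mbb{Z}\ltimes\mbf{A}$ and the reindexing that upgrades the pro epimorphism to the desired containment, which the paper leaves implicit.
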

\begin{proof}
Let $\bar{E}_n(\mbf{R},\mbf{A}):=\GL_n(\mbf{A})\cap E(\mbf{R},\mbf{A})$.
According to Theorem \ref{thm:Vaserstein}, the canonical map $E_n(\mbf{R},\mbf{A})\to\bar{E}_n(\mbf{R},\mbf{A})$ is a levelwise bijection for $n\ge \sr(\mbf{A})+1$.
Hence, the theorem follows from Corollary \ref{cor:Tits}.
\end{proof}

\section{Tor-unital pro rings}\label{TorUnital}

The treatment of this section closely follows Suslin \cite{Su95} and Geisser-Hesselholt \cite{GH06}.

\subsection{Definitions}

\begin{definition}\label{def:TorUnital}
A pro ring $\mbf{A}=\{A_m\}$ is \textit{Tor-unital} if 
\[
	\Tor^{\mbb{Z}\ltimes\mbf{A}}_i(\mbb{Z},\mbb{Z})=\{\Tor^{\mbb{Z}\ltimes A_m}_i(\mbb{Z},\mbb{Z})\}_m = 0
\]
as pro abelian groups for all $i>0$.
\end{definition}

\begin{example}
\leavevmode
\begin{enumerate}[(i)]
\item A unital pro ring, i.e.\ a pro system of unital rings, is Tor-unital.
\item (Morrow \cite{Mo15}) Let $A$ be an ideal of a noetherian commutative ring, then $\{A^m\}_{m\ge 1}$ is Tor-unital.
\end{enumerate}
\end{example}

\begin{definition}\label{def:special}
Let $\mbf{A}=\{A_m\}_{m\in J}$ be a pro ring.
\begin{enumerate}[(i)]
\item A \textit{left $\mbf{A}$-module} is a pro abelian group $\mbf{M}=\{M_m\}_{m\in J}$ with a level map $\mbf{A}\times\mbf{M}\to\mbf{M}$ which exhibits each $M_m$ as a left $A_m$-module.
A \textit{morphism between left $\mbf{A}$-modules $\mbf{M}=\{M_m\}$ and $\mbf{N}=\{N_m\}$} is a level map $f\colon \mbf{M}\to\mbf{N}$ such that each $f_m\colon M_m\to N_m$ is a morphism of left $A_m$-modules.
\item A left $\mbf{A}$-module $\mbf{P}$ is \textit{pseudo-free} if there is an isomorphism of left $\mbf{A}$-modules $\mbf{A}\otimes\mbf{L}\xrightarrow{\simeq}\mbf{P}$ for some pro system $\mbf{L}$ of free abelian groups.
We call such an $\mbf{L}$ a \textit{free basis of $\mbf{P}$}.
\item A morphism $f\colon\mbf{P}\to\mbf{M}$ of left $\mbf{A}$-modules is \textit{special} if $\mbf{P}$ is pseudo-free with a free basis $\mbf{L}$ and $f$ is induced from a level morphism of pro abelian groups $\mbf{L}\to\mbf{M}$.
\end{enumerate}
\end{definition}

\subsection{Pro resolution}\label{proresol}

\begin{proposition}[Suslin \cite{Su95}, Geisser-Hesselholt \cite{GH06}]\label{prop:TorUnital}
Let $\mbf{A}=\{A_m\}$ be a Tor-unital pro ring.
Suppose we are given an augmented complex
\[
	\dotso \to \mbf{C}_1 \to \mbf{C}_0 \xrightarrow{\epsilon} \mbf{C}_{-1}
\]
of left $\mbf{A}$-modules such that:
\footnote{We thank Takeshi Saito for pointing out an unnecessary condition, the augmentation $\epsilon$ is special, which was in the first draft and in \cite{Su95,GH06} too.}
\begin{enumerate}[(i)]
\item Each $\mbf{C}_k$ with $k\ge -1$ is pseudo-free.
\item The homology $H_k(C_{\bullet,m})$ is annihilated by $A_m$ for every $m$ and $k\ge -1$.
\end{enumerate}
Then
\[
	H_k(\mbf{C}_\bullet) = \{H_k(C_{\bullet,m})\}_m = 0
\]
for all $k\ge -1$.
\end{proposition}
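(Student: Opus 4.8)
The plan is to recognize $H_\bullet(\mbf{C}_\bullet)$ as the homology of the total complex of a double complex whose \emph{other} spectral sequence collapses to zero --- in effect, to compute $\Tor^{\mbb{Z}\ltimes\mbf{A}}(\mbb{Z},\mbf{C}_\bullet)$ in two ways and compare.

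First I would note that $\mbf{C}_\bullet$ is in fact a complex of \emph{left $\mbb{Z}\ltimes\mbf{A}$-modules}: a pseudo-free $A_m$-module is canonically a $\mbb{Z}\ltimes A_m$-module, and a map of left $A_m$-modules between two pseudo-free modules, being additive and $A_m$-linear, is automatically $\mbb{Z}\ltimes A_m$-linear. Fix a resolution $\mbf{B}_\bullet\to\mbb{Z}$ of $\mbb{Z}$ by free right $\mbb{Z}\ltimes\mbf{A}$-modules that is functorial in the ring (e.g.\ the canonical free resolution), so that it is a genuine resolution of pro-objects, and set $\mbf{K}_{p,q}:=\mbf{B}_p\otimes_{\mbb{Z}\ltimes\mbf{A}}\mbf{C}_q$ for $p\ge0$, $q\ge-1$. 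For each $n\ge-1$ the group $\Tot(\mbf{K})_n=\bigoplus_{p+q=n}\mbf{K}_{p,q}$ is a \emph{finite} direct sum, so both spectral sequences of $\mbf{K}$ converge in the category of pro abelian groups, each abutting to $H_\bullet(\Tot(\mbf{K}))$.

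Computing homology first in the $\mbf{B}$-direction gives $E^1_{p,q}=\Tor^{\mbb{Z}\ltimes\mbf{A}}_p(\mbb{Z},\mbf{C}_q)$. Since $\mbf{C}_q\cong\mbf{A}\otimes\mbf{L}_q$ with $\mbf{L}_q$ levelwise free abelian, hence flat over $\mbb{Z}$, this equals $\Tor^{\mbb{Z}\ltimes\mbf{A}}_p(\mbb{Z},\mbf{A})\otimes\mbf{L}_q$; and the long exact $\Tor$-sequence of $0\to\mbf{A}\to\mbb{Z}\ltimes\mbf{A}\to\mbb{Z}\to0$ identifies $\{\Tor^{\mbb{Z}\ltimes A_m}_p(\mbb{Z},A_m)\}_m$ with $\{\Tor^{\mbb{Z}\ltimes A_m}_{p+1}(\mbb{Z},\mbb{Z})\}_m$ for $p\ge1$ and with $\{A_m/A_m^2\}_m=\{\Tor^{\mbb{Z}\ltimes A_m}_1(\mbb{Z},\mbb{Z})\}_m$ for $p=0$, all of which vanish because $\mbf{A}$ is Tor-unital. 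Hence this spectral sequence has $E^1=0$, so $H_n(\Tot(\mbf{K}))=0$ for every $n\ge-1$. Computing homology first in the $\mbf{C}$-direction instead, flatness of $\mbf{B}_p$ gives $E^1_{p,q}=\mbf{B}_p\otimes_{\mbb{Z}\ltimes\mbf{A}}H_q(\mbf{C}_\bullet)$ and therefore $E^2_{p,q}=\Tor^{\mbb{Z}\ltimes\mbf{A}}_p(\mbb{Z},H_q(\mbf{C}_\bullet))$. By hypothesis (ii), $A_m$ annihilates $H_q(C_{\bullet,m})$, so $\mbb{Z}\ltimes A_m$ acts on it through the augmentation $\mbb{Z}\ltimes A_m\to\mbb{Z}$; by the change-of-rings spectral sequence for this surjection together with Tor-unitality --- which says precisely that $\{\Tor^{\mbb{Z}\ltimes A_m}_*(\mbb{Z},\mbb{Z})\}_m$ is $\mbb{Z}$ in degree $0$ and vanishes in positive degrees --- one gets $\{\Tor^{\mbb{Z}\ltimes A_m}_p(\mbb{Z},H_q(C_{\bullet,m}))\}_m=H_q(\mbf{C}_\bullet)$ for $p=0$ and $=0$ for $p\ge1$. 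So $E^2$ is concentrated in the column $p=0$, the spectral sequence degenerates, and $H_n(\Tot(\mbf{K}))\cong H_n(\mbf{C}_\bullet)$ for $n\ge-1$. Combining the two computations yields $H_n(\mbf{C}_\bullet)=0$ for all $n\ge-1$.

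The point to be careful about --- and the reason hypotheses (i) and (ii) are needed together --- is that none of the vanishing statements above hold termwise; they become available only after applying $\Tor^{\mbb{Z}\ltimes\mbf{A}}(\mbb{Z},-)$ and only as pro abelian groups, so the whole argument must be run in the abelian category of pro abelian groups, where one has to check that the two spectral sequences of $\mbf{K}$ still converge. That is unproblematic because for each total degree only finitely many $(p,q)$ with $p\ge0$, $q\ge-1$ occur, so neither the possible unboundedness of $\mbf{C}_\bullet$ and $\mbf{B}_\bullet$ nor the extra augmentation term $q=-1$ causes any difficulty; granting this, every remaining step is a routine long-exact-sequence or flat-base-change computation.
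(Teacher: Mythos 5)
Your argument is correct, and it reaches the conclusion by a route organized differently from the paper's. The common core is the same key computation: for a pseudo-free module $\mbf{P}=\mbf{A}\otimes\mbf{L}$ one has $\Tor^{\mbb{Z}\ltimes\mbf{A}}_p(\mbb{Z},\mbf{P})\simeq\Tor^{\mbb{Z}\ltimes\mbf{A}}_{p+1}(\mbb{Z},\mbb{Z})\otimes\mbf{L}=0$. From there the paper argues by induction on the homological degree $k$: it brutally truncates to $\mbf{Z}_k\to\mbf{C}_k\to\dotsb\to\mbf{C}_0$, uses the induction hypothesis to identify the abutment of the resulting hyper-Tor spectral sequence with $\Tor_*(\mbb{Z},\mbf{C}_{-1})=0$, extracts only $\Tor_0(\mbb{Z},\mbf{Z}_k)=\mbf{Z}_k/\mbf{A}\mbf{Z}_k=0$, and finishes with the elementary observation that hypothesis (ii) gives $\mbf{A}\mbf{Z}_k\subset\mbf{B}_k$. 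You instead balance Tor in one shot over the whole unbounded complex, and must therefore feed hypothesis (ii) into the second spectral sequence through the change-of-rings vanishing $\Tor^{\mbb{Z}\ltimes\mbf{A}}_p(\mbb{Z},\mbf{M})=0$ for $p\ge 1$ and $\mbf{M}$ killed by $\mbf{A}$ --- an extra input the paper never needs, though it does follow from Tor-unitality exactly as you indicate: over $\mbb{Z}$ the Cartan--Eilenberg spectral sequence has only the columns $s=0,1$, and every entry contributing to total degree $\ge 1$ carries a factor $\Tor^{\mbb{Z}\ltimes\mbf{A}}_{\ge 1}(\mbb{Z},\mbb{Z})$, so is pro-zero by functoriality in that variable. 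Your convergence worries are also correctly dispatched: the double complex lives in $p\ge 0$, $q\ge -1$, so each total degree sees finitely many terms, the filtrations are finite, and pro-zero objects are stable under levelwise subquotients, so the degeneration arguments go through in the abelian category of pro abelian groups. What your packaging buys is a shorter, induction-free proof; what the paper's buys is that it uses only $\Tor_0$ of the cycle modules and makes the quantitative refinement of Proposition \ref{prop2:TorUnital} (index functions $s(m)$ independent of the complex and of $\sigma$) immediate --- though that refinement would also fall out of your argument, since in each total degree only finitely many applications of Tor-unitality enter.
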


In fact, a finer result holds.
\begin{proposition}\label{prop2:TorUnital}
Let $\mbf{A}=\{A_m\}_{m\in J}$ be a Tor-unital pro ring and $k\ge -1$.
Then there exists $s(m)\ge m$ for each $m\in J$ such that the map
\[
	\iota_{s(m),m}\colon H_k(C_{\bullet,s(m)}) \to H_k(C_{\bullet,m})
\]
is zero for all augmented complexes of left $\mbf{A}$-modules which satisfy the conditions (i) (ii).
\end{proposition}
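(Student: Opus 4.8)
The plan is to deduce Proposition~\ref{prop2:TorUnital} formally from Proposition~\ref{prop:TorUnital}, by applying the non-uniform vanishing to a single ``universal'' complex that carries every possible counterexample at once.

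So I would fix $k\ge -1$ and $m\in J$ and suppose, for a contradiction, that no $s(m)$ as in the statement exists. Then for every $s\ge m$ there is an augmented complex $\mbf{C}^{(s)}_\bullet$ of left $\mbf{A}$-modules satisfying conditions (i) and (ii) for which $\iota_{s,m}\colon H_k(C^{(s)}_{\bullet,s})\to H_k(C^{(s)}_{\bullet,m})$ is nonzero. Next I would form the levelwise direct sum $\mbf{C}_\bullet:=\bigoplus_{s\ge m}\mbf{C}^{(s)}_\bullet$, that is, $C_{j,\ell}:=\bigoplus_{s\ge m}C^{(s)}_{j,\ell}$ with the direct sums of the transition maps as structure maps, and check that $\mbf{C}_\bullet$ again satisfies (i) and (ii): a direct sum of pseudo-free modules $\{A_\ell\otimes L^{(s)}_{j,\ell}\}_\ell$ is pseudo-free with free basis $\{\bigoplus_s L^{(s)}_{j,\ell}\}_\ell$, and, since homology of complexes of abelian groups commutes with direct sums, $H_j(C_{\bullet,\ell})=\bigoplus_s H_j(C^{(s)}_{\bullet,\ell})$ is annihilated by $A_\ell$ as each summand is. There is no set-theoretic difficulty here: $\{s\in J:s\ge m\}$ is a set and we pick only one complex per index.

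Then I would apply Proposition~\ref{prop:TorUnital} to $\mbf{C}_\bullet$; it gives $H_k(\mbf{C}_\bullet)=0$ as a pro abelian group, so there is $t\ge m$ with $\iota_{t,m}\colon H_k(C_{\bullet,t})\to H_k(C_{\bullet,m})$ the zero map. Under the identifications above this map is the direct sum over $s\ge m$ of the maps $\iota_{t,m}\colon H_k(C^{(s)}_{\bullet,t})\to H_k(C^{(s)}_{\bullet,m})$, hence each of these is zero; taking $s=t$ contradicts the choice of $\mbf{C}^{(t)}_\bullet$.

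Because the reduction is this soft I do not expect a serious obstacle; the only points needing care are that direct sums preserve conditions (i) and (ii) and are compatible with the transition maps and with passage to homology, all routine. One could instead reprove Proposition~\ref{prop:TorUnital} while tracking indices, observing that the only large indices it introduces come from the pro-vanishing of finitely many of the groups $\Tor^{\mbb{Z}\ltimes\mbf{A}}_i(\mbb{Z},\mbb{Z})$, which does not depend on the complex; but the direct-sum argument avoids that bookkeeping entirely.
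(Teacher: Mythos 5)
Your argument is correct, but it is not the route the paper takes. The paper proves Propositions \ref{prop:TorUnital} and \ref{prop2:TorUnital} simultaneously: it runs the induction on $k$ once and observes at the end that ``a finer variant is also clear from this proof,'' meaning that the only transition indices ever consumed come from the pro-vanishing of finitely many of the groups $\Tor^{\mbb{Z}\ltimes\mbf{A}}_i(\mbb{Z},\mbb{Z})\otimes\mbf{L}$, hence depend only on $\mbf{A}$ and $k$ and not on the complex --- exactly the bookkeeping alternative you sketch in your last sentence. Your main argument instead deduces the uniform statement formally from the non-uniform one by aggregating all putative counterexamples for a fixed $m$ into a single direct-sum complex $\bigoplus_{s\ge m}\mbf{C}^{(s)}_\bullet$ and applying Proposition \ref{prop:TorUnital} once. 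The verifications you flag are indeed the only ones needed and all go through: $\mbf{A}\otimes(-)$ and homology commute with direct sums, a direct sum of free abelian groups is free, the index set $\{s\in J: s\ge m\}$ is a set, and condition (ii) is inherited summandwise; note also that since the footnote to Proposition \ref{prop:TorUnital} drops the requirement that the augmentation be special, you do not even need to check that specialness survives direct sums. What your approach buys is a genuinely self-contained proof of the proposition as a black-box consequence of the unquantified vanishing statement, replacing the paper's terse appeal to re-reading its own induction; what it costs is nothing of substance, though it is worth being aware that this quantifier-exchange trick relies on the target statement being a \emph{vanishing} of a single map at each level (so that it can be detected summand by summand), which is precisely the form Proposition \ref{prop2:TorUnital} has.
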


\begin{proof}
Let $\mbf{C}$ be a pseudo-free left $\mbf{A}$-module with a free basis $\mbf{L}$.
Then we have levelwise isomorphisms
\[
\begin{split}
	\Tor^{\mbb{Z}\ltimes\mbf{A}}_q(\mbb{Z},\mbf{C}) &\simeq \Tor^{\mbb{Z}\ltimes\mbf{A}}_q(\mbb{Z},\mbf{A}\otimes\mbf{L}) \\
											 		&\simeq \Tor^{\mbb{Z}\ltimes\mbf{A}}_q(\mbb{Z},\mbf{A})\otimes\mbf{L} \\
											 		&\simeq \Tor^{\mbb{Z}\ltimes\mbf{A}}_{q+1}(\mbb{Z},\mbb{Z})\otimes\mbf{L}.
\end{split}
\]
Since $\mbf{A}$ is Tor-unital, we see that 
\[
	\Tor^{\mbb{Z}\ltimes\mbf{A}}_q(\mbb{Z},\mbf{C}) = 0
\]
for every $q\ge 0$.

Let $\mbf{Z}_k$ and $\mbf{B}_{k-1}$ are the kernel and the image of $\mbf{C}_k\to \mbf{C}_{k-1}$ respectively.
By the assumption (ii), we have a levelwise inclusion $\mbf{A}\mbf{C}_{-1}\subset \mbf{B}_{-1}$, and thus there is a levelwise surjection $\mbf{C}_{-1}/\mbf{A}\mbf{C}_{-1}\twoheadrightarrow H_{-1}(\mbf{C}_\bullet)$.
Since $\mbf{C}_{-1}$ is pseudo-free, $\mbf{C}_{-1}/\mbf{A}\mbf{C}_{-1} = \Tor^{\mbb{Z}\ltimes\mbf{A}}_0(\mbb{Z},\mbf{C}_{-1})=0$.
Therefore, $H_{-1}(\mbf{C}_\bullet)=0$.

Let $k\ge 0$ and suppose that $H_l(\mbf{C}_\bullet) = 0$ for $l<k$.
Consider the levelwise spectral sequence
\[
	\mbf{E}^1_{pq} = \begin{cases}
						\Tor^{\mbb{Z}\ltimes\mbf{A}}_q(\mbb{Z},\mbf{C}_p) &\text{if } 0\le p \le k \\
						\Tor^{\mbb{Z}\ltimes\mbf{A}}_q(\mbb{Z},\mbf{Z}_k) &\text{if } p=k+1 \\
						0 &\text{otherwise}, \end{cases}
\]
which arises from the brutal truncation of the complex
\[
	\mbf{Z}_k \to \mbf{C}_k \to \mbf{C}_{k-1} \to \dotso \to \mbf{C}_0.
\]
By the induction hypothesis, the complex is pro quasi-isomorphic to $\mbf{C}_{-1}$ and thus
\[
	\mbf{E}^\infty_q \simeq \Tor^{\mbb{Z}\ltimes\mbf{A}}_q(\mbb{Z},\mbf{C}_{-1}) = 0
\]
for $q\ge 0$.
Since $\mbf{C}_p$ is pseudo-free, we also have $\mbf{E}^1_{pq}=0$ for $0\le p\le k$.
Hence, 
\[
	\mbf{Z}_k/\mbf{A}\mbf{Z}_k = \Tor^{\mbb{Z}\ltimes\mbf{A}}_0(\mbb{Z},\mbf{Z}_k) = \mbf{E}^\infty_{k+1} = 0.
\]
On the other hand, by the assumption (ii), we have $\mbf{A}\mbf{Z}_k\subset \mbf{B}_k$.
Therefore, $H_k(\mbf{C}_\bullet)= 0$.

A finer variant is also clear from this proof.
\end{proof}

\begin{lemma}\label{lem:proresol}
Let $\mbf{A}$ be a pro ring and $\mbf{P}$ a pseudo-free left $\mbf{A}$-module.
Then there exists an augmented complex $\mbf{P}_\bullet$ of left $\mbf{A}$-modules with $\mbf{P}_{-1}=\mbf{P}$ which satisfies the conditions (i) (ii) and
\begin{enumerate}[(iii)]
\item The augmentation $\epsilon\colon\mbf{P}_0 \to \mbf{P}_{-1}$ is special
\end{enumerate}
\end{lemma}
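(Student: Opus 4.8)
The plan is to construct $\mbf{P}_\bullet$ by induction on the degree, at each stage building a pseudo-free module mapping \emph{specially} onto the cycles of the previous stage, in such a way that condition (ii) is automatic. First I would recall the key mechanism: if $\mbf{M}=\{M_m\}$ is any left $\mbf{A}$-module, I can form the pseudo-free module $\mbf{A}^+\otimes\mbf{Z}[\mbf{M}]$, where $\mbf{Z}[\mbf{M}]=\{\mbb{Z}^{(M_m)}\}_m$ is the levelwise free abelian group on the underlying sets and $\mbf{A}^+=\mbb{Z}\ltimes\mbf{A}$; the tautological level map $\mbf{Z}[\mbf{M}]\to\mbf{M}$ induces a special morphism $\mbf{A}^+\otimes\mbf{Z}[\mbf{M}]\to\mbf{M}$ of $\mbf{A}^+$-modules which is a levelwise surjection (since $1\in\mbb{Z}\ltimes A_m$ hits every element). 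Restricting scalars, this gives a pseudo-free left $\mbf{A}$-module, but one must be slightly careful: the canonical surjection lands in $\mbf{M}$ with kernel containing $\mbf{A}\cdot(\text{basis})$, so the cokernel of the induced map of \emph{$\mbf{A}$-modules} (not $\mbf{A}^+$-modules) is annihilated by $\mbf{A}$. This is exactly what produces condition (ii).

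Concretely, set $\mbf{P}_{-1}=\mbf{P}$. Given that $\mbf{P}_{-1},\dotsc,\mbf{P}_{k-1}$ have been constructed together with the differentials, let $\mbf{Z}_{k-1}=\{Z_{k-1,m}\}$ be the levelwise kernel of $\mbf{P}_{k-1}\to\mbf{P}_{k-2}$ (with the convention $\mbf{Z}_{-1}=\mbf{P}_{-1}$, i.e.\ for $k=0$ we take the whole module and the augmentation must land in it). Put $\mbf{P}_k:=\mbf{A}^+\otimes\mbf{Z}[\mbf{Z}_{k-1}]$ with the special differential $d_k\colon\mbf{P}_k\to\mbf{P}_{k-1}$ induced by $\mbf{Z}[\mbf{Z}_{k-1}]\to\mbf{Z}_{k-1}\hookrightarrow\mbf{P}_{k-1}$; this visibly factors through $\mbf{Z}_{k-1}$, so $d_{k-1}d_k=0$. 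Each $\mbf{P}_k$ is pseudo-free by construction, so (i) holds, and $\epsilon=d_0$ is special, so (iii) holds. For (ii): the image of $d_k$ is the $\mbf{A}^+$-submodule of $\mbf{Z}_{k-1}$ generated by the image of the basis, which equals all of $Z_{k-1,m}$ as a set-theoretic generating set is used; hence $d_k$ is a levelwise surjection onto $\mbf{Z}_{k-1}$, and therefore $H_k(C_{\bullet,m})=Z_{k-1,m}/\mathrm{im}(d_k)_m=0$ for all $k\ge 0$ and all $m$ --- in particular it is annihilated by $A_m$. For $k=-1$, $H_{-1}(C_{\bullet,m})=P_{-1,m}/\mathrm{im}(d_0)_m$; here $\mathrm{im}(d_0)$ is the $\mbf{A}^+$-module generated by the basis of $\mbf{Z}[\mbf{P}_{-1}]$, which is again all of $\mbf{P}_{-1}$ levelwise, so in fact $H_{-1}(C_{\bullet,m})=0$ too. (If one prefers the weaker formulation, one can instead arrange only that $A_m$-multiples of generators lie in the image, which still gives (ii); but surjectivity is cleanest.)

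The only genuinely delicate point is functoriality/levelwise compatibility: one must check that the assignments $\mbf{M}\rightsquigarrow\mbf{Z}[\mbf{M}]$ and $\mbf{M}\rightsquigarrow\mbf{A}^+\otimes\mbf{Z}[\mbf{M}]$ are compatible with the structure maps $\iota_{m,n}$ of the pro system, so that $\mbf{P}_k$ really is a pro system of $A_m$-modules indexed by the same poset $J$ and the differentials are level maps. This is routine --- $\mbf{Z}[-]$ applied levelwise to the underlying sets is functorial, and tensoring with the fixed pro ring $\mbf{A}^+$ preserves level maps --- but it is where the bookkeeping lives. A secondary subtlety, already flagged in the statement of Proposition \ref{prop:TorUnital} via the footnote about Takeshi Saito, is that one does \emph{not} need to worry about making anything beyond $\epsilon$ special; the differentials $d_k$ for $k\ge 1$ are special here for free, but even if they were not, (i) and (ii) would suffice for the applications. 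I expect the main obstacle to be purely expository: presenting the inductive construction cleanly while keeping track of the kernel $\mbf{Z}_{k-1}$ as a sub-pro-module and verifying that taking levelwise kernels is compatible with the indexing poset.
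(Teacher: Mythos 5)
There is a genuine problem with your main construction. You build the covering modules as $\mbf{A}^+\otimes\mbb{Z}[\mbf{M}]$ with $\mbf{A}^+=\mbb{Z}\ltimes\mbf{A}$ and assert that restriction of scalars makes this a pseudo-free left $\mbf{A}$-module. It is not, in the sense of Definition \ref{def:special}: pseudo-free means isomorphic to $\mbf{A}\otimes\mbf{L}$ for a pro system $\mbf{L}$ of free abelian groups, whereas $\mbf{A}^+\otimes\mbf{L}\cong\mbf{L}\oplus(\mbf{A}\otimes\mbf{L})$ as $\mbf{A}$-modules. This is not a cosmetic mismatch of definitions. The entire point of pseudo-freeness in Proposition \ref{prop2:TorUnital} is the computation $\Tor^{\mbb{Z}\ltimes\mbf{A}}_q(\mbb{Z},\mbf{A}\otimes\mbf{L})\simeq\Tor^{\mbb{Z}\ltimes\mbf{A}}_{q+1}(\mbb{Z},\mbb{Z})\otimes\mbf{L}$, which vanishes as a pro group for \emph{all} $q\ge 0$, including $q=0$ (where it equals $(\mbf{A}/\mbf{A}^2)\otimes\mbf{L}$). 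For your modules one instead gets $\Tor^{\mbb{Z}\ltimes\mbf{A}}_0(\mbb{Z},\mbf{A}^+\otimes\mbf{L})\simeq\mbf{L}$, which does not vanish, so a resolution built from them would be useless in the proof of Proposition \ref{prop2:TorUnital}. Consequently condition (i) fails for your $\mbf{P}_k$, and condition (iii) fails too, since ``special'' presupposes that the source is pseudo-free. The levelwise surjectivity you gain from the unit element is precisely what you must give up.

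The correct construction is the one you relegate to your final parenthetical, and it is exactly what the paper does: set $\mbf{P}_0=\mbf{A}\otimes\mbb{Z}[\mbf{P}]$ with the special augmentation induced by the tautological map $\mbb{Z}[\mbf{P}]\to\mbf{P}$, let $\mbf{R}$ be its levelwise kernel, set $\mbf{P}_1=\mbf{A}\otimes\mbb{Z}[\mbf{R}]$, and iterate. The differentials are then not surjective onto the cycles; instead the image of $\mbf{P}_k$ in the cycles $\mbf{Z}_{k-1}$ is $\mbf{A}\mbf{Z}_{k-1}$, so the homology is $\mbf{Z}_{k-1}/\mbf{A}\mbf{Z}_{k-1}$, which is annihilated levelwise by $A_m$ --- this is exactly the (deliberately weak) condition (ii), and it is all that Proposition \ref{prop2:TorUnital} needs, since $\mbf{Z}_k/\mbf{A}\mbf{Z}_k=\Tor_0^{\mbb{Z}\ltimes\mbf{A}}(\mbb{Z},\mbf{Z}_k)$ is then killed by the spectral-sequence argument. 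Your remarks on functoriality and on not needing the higher differentials to be special are correct but secondary; the fix you need is to delete the unitalization throughout.
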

We call $\mbf{P}_\bullet$ a \textit{pro resolution} of $\mbf{P}$.
\begin{proof}
Write $\mbf{P}=\{P_m\}$ and let $\mbb{Z}[\mbf{P}]=\{\mbb{Z}[P_m]\}$ be the pro system of the free abelian groups generated by the sets $P_m$.
Then $\mbf{P}_0 := \mbf{A} \otimes \mbb{Z}[\mbf{P}]$ is a pseudo-free $\mbf{A}$-module and the canonical map $\mbb{Z}[\mbf{P}]\to\mbf{P}$ induces a special morphism $\epsilon\colon\mbf{P}_0\to\mbf{P}$.

Let $\mbf{R}=\{R_m\}$ be the kernel of $\epsilon$, and $\mbb{Z}[\mbf{R}]=\{\mbb{Z}[R_m]\}$ the pro system of the free abelian group generated by $R_m$.
Then $\mbf{P}_1 := \mbf{A}\otimes \mbb{Z}[\mbf{R}]$ is a pseudo-free $\mbf{A}$-module.
Repeating this procedure, we obtain an augmented complex $\mbf{P}_\bullet$ with $\mbf{P}_{-1}=\mbf{P}$ which satisfies the desired conditions.
\end{proof}

\section{Pro acyclicity of triangular spaces}\label{ProAcyclic}

The goal of this section is to prove Theorem \ref{thm:proacyclic}.

\subsection{Preliminaries on homology}\label{grouphomology}

\subsubsection{}

For a simplicial set $X$, we denote by $C_*(X)$ the complex freely generated by $X_*$ with the differential being the alternating sum of the faces.
We write $H_n(X)=H_n(C_*(X))$.
Also, we write $\tilde{H}_n(X)$ for the reduced homology.

Let $G$ be a group.
We write $EG$ for the simplicial set whose degree $n$-part is $G^{\times(n+1)}$ and whose $i$-th face (resp.\ the $i$-th degeneracy) omits the $i$-th entry (resp.\ repeats the $i$-th entry).
We give a right $G$-action on $EG$ by $(g_0,\dotsc,g_n)\cdot g := (g_0g,\dotsc,g_ng)$.
The classifying space $BG$ is defined to be $EG/G$.

\subsubsection{}

By a pro object or pro system, we mean a pro object whose index category is a left filtered small category.

\begin{lemma}\label{lem:proC_*}
Let $f\colon X\to Y$ be a morphism between pro systems of pointed simplicial sets.
Suppose that $f$ induces pro isomorphisms
\[
	\pi_n(X) \xrightarrow{\sim} \pi_n(Y)
\]
for all $n\ge 0$.
Then $f$ induces pro isomorphisms
\[
	H_n(X) \xrightarrow{\sim} H_n(Y)
\]
for all $n\ge 0$.
\end{lemma}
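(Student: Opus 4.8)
**Proof proposal for Lemma \ref{lem:proC_*}.**

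The plan is to reduce the statement to the classical Hurewicz-type comparison between homotopy and homology, carried out with enough uniformity that it passes to pro objects. First I would recall that for a pointed simplicial set $Z$ there is a natural Postnikov-type tower, or more economically the skeletal/Moore filtration, which lets one express $H_n(Z)$ in terms of the homotopy groups $\pi_q(Z)$ for $q\le n$ via a first-quadrant spectral sequence (e.g.\ the one associated to the Postnikov tower, or the mod-$\mcal{C}$ Hurewicz machinery). Applying this naturally to both $X$ and $Y$ produces a morphism of pro systems of spectral sequences. Since $f$ induces pro isomorphisms on all $\pi_n$ by hypothesis, it induces pro isomorphisms on every $E^2$-page (each $E^2_{pq}$ is built functorially out of the groups $\pi_*(X)$, $\pi_*(Y)$), and because pro isomorphisms form a saturated class closed under kernels, cokernels, and extensions in the category of pro abelian groups — which is abelian — the induced map on each subsequent page, and hence on the abutment $H_n$, is a pro isomorphism.

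The one genuine subtlety, and the main obstacle, is the passage to the limit of the spectral sequence argument in the pro category: one must know that the relevant spectral sequences converge and that "pro iso on each $E^2$-term'' really does yield "pro iso on $E^\infty$'' and then on $H_n$, uniformly. For a fixed $n$ only finitely many terms $E^2_{pq}$ with $p+q=n$ contribute, and the spectral sequence degenerates at a finite page in that total degree, so there is no actual infinite-limit problem in the index; the differentials in and out of $E^r_{pq}$ involve only finitely many terms. Thus the induction on $r$ (pro iso on $E^r \Rightarrow$ pro iso on $E^{r+1}$, using exactness of the category of pro abelian groups) terminates, and a further finite induction on the filtration length gives the pro isomorphism on $H_n$. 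An alternative and perhaps cleaner route, avoiding spectral sequences entirely, is to argue directly: by the pro Whitehead-style reasoning one may replace $X$ and $Y$ levelwise by their $n$-th Postnikov sections up to pro isomorphism, and then build $H_n$ from the finitely many stages by the long exact sequences of the fibrations $K(\pi_q, q) \to P_q \to P_{q-1}$, again invoking closure of pro isomorphisms under the five lemma.

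I would write the proof in the second style: fix $n$, take Postnikov truncations, induct on the number of stages $q = 0, 1, \dots, n$, and at each stage feed the fibration's long exact homology sequence (Serre/Leray–Serre for the simplicial fibration, which is natural) into the pro five lemma. The hypotheses $\pi_q(X) \xrightarrow{\sim} \pi_q(Y)$ for $q \le n$ are exactly what is needed to start and propagate the induction, and since pro abelian groups form an abelian category, the five lemma is available there verbatim. The base case $n = 0$ is the observation that $H_0$ of a pointed connected-up-to-pro object is determined by $\pi_0$. This gives the pro isomorphism $H_n(X) \xrightarrow{\sim} H_n(Y)$ for every $n \ge 0$, as desired.
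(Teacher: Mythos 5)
Your endgame (Postnikov truncation plus the Hurewicz/Serre fact that $H_k(X)\to H_k(P_nX)$ is a levelwise isomorphism for $k\le n$) matches the paper's, but the way you compare the truncations of $X$ and $Y$ has a genuine gap. First, the claim in your opening paragraph that there is a first-quadrant spectral sequence computing $H_n(Z)$ whose $E^2$-page is ``built functorially out of the groups $\pi_*(Z)$'' is false: no such spectral sequence exists (homology is not a functor of the homotopy groups; the $k$-invariants intervene). What actually exists is the \emph{iterated} family of Serre spectral sequences of the fibrations $K(\pi_q,q)\to P_q\to P_{q-1}$, which is the version you pivot to. But there the $E^2$-term is $H_p\bigl(P_{q-1};\mathcal{H}_s(K(\pi_q,q))\bigr)$ with \emph{local} coefficients twisted by the $\pi_1(P_{q-1})$-action, and your inductive hypothesis --- a pro isomorphism on integral homology of $P_{q-1}$ together with a pro isomorphism on $\pi_q$ --- does not control these twisted groups. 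One cannot even formulate the needed comparison of coefficient systems, because the ``inverse'' maps witnessing the pro isomorphisms exist only on homotopy groups, not at the space level. For simple spaces (trivial $\pi_1$-actions) your induction does close up via the natural universal coefficient sequence and the five lemma in the abelian category of pro abelian groups, and this would cover the paper's actual application to simplicial abelian groups; but the lemma is stated for arbitrary pointed simplicial sets, and your argument does not prove it in that generality.

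The paper sidesteps all of this by citing Isaksen's theorem: a map of pro-simplicial sets inducing pro isomorphisms on all $\pi_n$ induces a \emph{strict} weak equivalence (isomorphic in the pro category to a levelwise weak equivalence) on each Postnikov section $P_n(X)\to P_n(Y)$. That strictification is precisely the upgrade from ``pro isomorphism on homotopy groups'' to a space-level statement that your purely $E^2$-level comparison cannot see, and once it is in hand the homology comparison is the classical levelwise theorem. To repair your proof you would either need to restrict to simple spaces, strengthen the induction to homology with all compatible local systems (which runs into the formulation problem above), or invoke a strictification result of Isaksen's type --- at which point you have the paper's proof.
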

\begin{proof}
Since $\mbb{Z}\pi_0(X)\simeq H_0(X)$, the assertion is clear for $n=0$.
Hence, by taking the connected components, we may assume that $X$ and $Y$ are connected.

Then, according to \cite{Is01}, for each $n\ge 1$,
the induced map $P_n(X) \to P_n(Y)$ on the $n$-th Postnikov sections is a strict weak equivalence, i.e.\ isomorphic to a levelwise weak equivalence.
Hence, the induced map $C_*(P_n(X))\to C_*(P_n(Y))$ is isomorphic to a levelwise quasi-isomorphism.

On the other hand, by Hurewicz theorem and Serre spectral sequence, we have levelwise isomorphisms $H_k(X) \simeq H_k(P_n(X))$ for $k\le n$.
Now, in the commutative diagram
\[
\xymatrix{
	H_k(X) \ar[r] \ar[d]^\simeq 	& H_k(Y) \ar[d]^\simeq \\
	H_k(P_n(X)) \ar[r]^\simeq 	& H_k(P_n(Y)),
}
\]
the vertical maps and the bottom map are pro isomorphisms for $k\le n$, and so is the top map.
Since $n$ is arbitrary, $H_k(X)\xrightarrow{\simeq}H_k(Y)$ is a pro isomorphism for every $k\ge 0$.
\end{proof}

For a simplicial group $G$, we consider the bi-simplicial set $BG$ constructed degreewise.
For a bi-simplicial set $X$, we denote by $C_*(X)$ the double-complex freely generated by $X_*$ with the differential being the alternating sum of the faces.

\begin{corollary}\label{cor:proC_*}
Let $f\colon P \to Q$ be a morphism between pro systems of simplicial abelian groups.
Suppose that $f$ induces pro isomorphisms
\[
	\pi_n(P) \xrightarrow{\sim} \pi_n(Q)
\]
for all $n\ge 0$.
Then $f$ induces pro isomorphisms
\[
	H_n(\Tot C_*(BP)) \xrightarrow{\sim} H_n(\Tot C_*(BQ))
\]
for all $n\ge 0$.
\end{corollary}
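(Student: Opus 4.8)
The plan is to reduce Corollary \ref{cor:proC_*} to Lemma \ref{lem:proC_*} by exhibiting the bi-simplicial objects $BP$ and $BQ$ as (diagonals of) pro systems of pointed simplicial sets to which the lemma applies, and by identifying the homology of the total complex with the homology of the diagonal. First I would recall that for a bi-simplicial set $X$, the Eilenberg--Zilber theorem gives a natural chain homotopy equivalence $\Tot C_*(X) \simeq C_*(\diag X)$, hence a natural isomorphism $H_n(\Tot C_*(X)) \simeq H_n(\diag X)$; this is levelwise and natural, so it passes to pro systems. Thus it suffices to show that $\diag BP \to \diag BQ$ induces pro isomorphisms on $H_n$ for all $n$.

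Next I would verify the hypotheses of Lemma \ref{lem:proC_*} for the level map $\diag BP \to \diag BQ$ of pro systems of pointed simplicial sets (pointed by the basepoint $0$ of the simplicial abelian groups). The key input is that for a simplicial abelian group $P$, the space $\diag BP$ is naturally pointed and its homotopy groups are computed by $\pi_n(\diag BP) \simeq \pi_{n-1}(P)$ for $n\ge 1$ and $\pi_0(\diag BP) = *$ — this is the usual statement that $B$ shifts homotopy up by one degree, which holds because $P$, being a simplicial group (indeed abelian), is a Kan complex and $P \simeq \Omega BP$. This identification is natural in $P$, so it is compatible with the pro structure. Since $f\colon P\to Q$ induces pro isomorphisms $\pi_n(P)\xrightarrow{\sim}\pi_n(Q)$ for all $n\ge 0$ by hypothesis, the induced map $\diag BP \to \diag BQ$ induces pro isomorphisms on all $\pi_n$. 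Lemma \ref{lem:proC_*} then yields pro isomorphisms $H_n(\diag BP)\xrightarrow{\sim} H_n(\diag BQ)$, and combining with the Eilenberg--Zilber identification gives the claim.

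The main obstacle — really the only subtle point — is ensuring that all the comparisons ($\pi_n(\diag BP)\simeq\pi_{n-1}(P)$ and $\Tot C_*(BP)\simeq C_*(\diag BP)$) are genuinely \emph{natural} and \emph{levelwise}, so that they are morphisms of pro systems and not merely objectwise isomorphisms that fail to assemble. Both are classical and natural: the shuffle/Alexander--Whitney maps of Eilenberg--Zilber are natural in the bi-simplicial set, and the identification of $\pi_*(\diag BP)$ with $\pi_{*-1}(P)$ comes from the natural weak equivalence $P \to \Omega\,\diag BP$ available because each $P_m$ is a simplicial abelian group. One should also be mildly careful that the pro-homotopy-group hypothesis is exactly what Lemma \ref{lem:proC_*} consumes — but after the degree shift this matches up, using that $\pi_0$ on both sides is trivial. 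Beyond these bookkeeping checks, the corollary is a formal consequence of the lemma.
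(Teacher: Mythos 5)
Your argument is correct, but it is not the route the paper takes. The paper's proof applies Lemma \ref{lem:proC_*} \emph{degreewise in the bar direction}: since $B_kP=P^{\times k}$ and $\pi_n$ commutes with finite products, the hypothesis gives pro isomorphisms $\pi_n(B_kP)\to\pi_n(B_kQ)$ for every fixed $k$, whence the lemma yields pro isomorphisms $H_n(C_*(B_kP))\to H_n(C_*(B_kQ))$; the first-quadrant spectral sequence of the double complex then assembles these into pro isomorphisms on $H_n(\Tot C_*(B-))$. You instead apply the lemma \emph{once}, to the diagonal $\diag BP\to\diag BQ$, after invoking the natural Eilenberg--Zilber equivalence $\Tot C_*(X)\simeq C_*(\diag X)$ and the natural degree shift $\pi_n(\diag BP)\simeq\pi_{n-1}(P)$ (with $\pi_0(\diag BP)=*$ since $\diag BP$ is reduced). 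Both arguments are sound, and your attention to the naturality of the two comparisons is exactly the right concern, since only natural identifications descend to morphisms of pro systems. The trade-off: the paper's version needs nothing beyond the compatibility of $\pi_n$ with products and a standard bicomplex spectral sequence in the abelian category of pro abelian groups, so it is the more elementary bookkeeping; your version avoids the spectral sequence entirely but leans on two classical (if heavier) facts about classifying spaces of simplicial groups --- the Dold--Puppe form of Eilenberg--Zilber and the delooping $P\simeq\Omega\,\diag BP$. Either is acceptable; if you keep your version, state explicitly that the comparison $\pi_*(\diag BP)\cong\pi_{*-1}(P)$ is induced by the natural principal fibration $P\to WP\to\overline{W}P$ (or an equivalent natural construction), since that is where the naturality actually lives.
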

\begin{proof}
Now, the morphism $B_k P \to B_k Q$ induces pro isomorphisms $\pi_n(B_kP)\to \pi_n(B_kQ)$ for all $n\ge 0$.
Hence, by Lemma \ref{lem:proC_*}, the induced maps
\[
	H_n(C_*(B_kP))\to H_n(C_*(B_kQ))
\]
are pro isomorphisms for all $n\ge 0$.
By the standard spectral sequence, we obtain the corollary.
\end{proof}

\subsubsection{}

Let us quote a lemma from \cite[\S2]{Su95}.
\begin{lemma}\label{lem:semidirect}
Let $G$ be a group and $H$ a group with a left $G$-action.
Then there exists a natural quasi-isomorphism
\[
	C_*(B(G\ltimes H)) \simeq C_*(EG)\otimes_G C_*(BH).
\]
\end{lemma}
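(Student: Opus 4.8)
The plan is to realize $B(G\ltimes H)$ explicitly as the quotient of $EG\times BH$ by a free diagonal $G$-action, and then reduce the statement to the Eilenberg--Zilber theorem. Write $K=G\ltimes H$. The key starting observation is that the face and degeneracy maps of $EK$ are given by omission and repetition of entries, \emph{not} by the group law of $K$; since $K=G\times H$ as a set, this yields a simplicial isomorphism $EK\cong EG\times EH$. Under it, the normal subgroup $H\le K$ acts (on the right) only on the second factor, by right translation, so passing to $H$-orbits gives a simplicial isomorphism $EK/H\cong EG\times BH$. The residual action of $G=K/H$ is by right translation on $EG$ and, on the second factor, through the given $G$-action on $H$ (hence on $BH$); denote the quotient by $EG\times_G BH$. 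As $G$ acts freely on $EG$, the diagonal action on $EG\times BH$ is free, and therefore
\[
	B(G\ltimes H)\ =\ EK/K\ \cong\ (EG\times BH)/G.
\]

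Because the diagonal $G$-action on $EG\times BH$ is free, $C_*(EG\times BH)$ is a complex of free $\mbb{Z}[G]$-modules and the quotient map induces an isomorphism of complexes
\[
	C_*(B(G\ltimes H))\ \cong\ \mbb{Z}\otimes_{\mbb{Z}[G]}C_*(EG\times BH),
\]
that is, $C_*(B(G\ltimes H))$ is the complex of $G$-coinvariants of $C_*(EG\times BH)$ for the diagonal action.

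Next I invoke the Eilenberg--Zilber theorem: the shuffle map and the Alexander--Whitney map are mutually inverse natural chain homotopy equivalences between $C_*(EG)\otimes C_*(BH)$ and $C_*(EG\times BH)$, and the chain homotopies witnessing the two compositions are natural as well. Naturality in each variable makes all of this data $G$-equivariant for the diagonal action (right translation on $C_*(EG)$, the induced action on $C_*(BH)$). Applying the additive functor $\mbb{Z}\otimes_{\mbb{Z}[G]}-$ — which preserves chain homotopy equivalences precisely because the homotopies are equivariant, so no flatness is needed — and using the identification $(C_*(EG)\otimes C_*(BH))_G\cong C_*(EG)\otimes_G C_*(BH)$ of diagonal coinvariants with the relative tensor product, I obtain the desired natural quasi-isomorphism
\[
	C_*(EG)\otimes_G C_*(BH)\ \xrightarrow{\ \simeq\ }\ \mbb{Z}\otimes_{\mbb{Z}[G]}C_*(EG\times BH)\ \cong\ C_*(B(G\ltimes H)).
\]
Naturality in $G$ and $H$ is inherited from the naturality of every construction used.

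The step requiring the most care is the first one: fixing the semidirect-product convention and keeping the left- and right-$G$-actions straight, so that the residual $G$-action on $EG\times BH$ is correctly identified as the diagonal one and the coinvariants land on exactly $C_*(EG)\otimes_G C_*(BH)$ (with $C_*(BH)$ carrying its natural left $G$-action) as in the statement. Once the conventions are pinned down, the $G$-equivariance of the Eilenberg--Zilber data is a formal consequence of naturality, and everything else is bookkeeping.
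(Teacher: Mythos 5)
Your proof is correct. Note that the paper does not actually prove this lemma: it is quoted verbatim from \cite[\S2]{Su95}, so there is no in-text argument to compare against. Your route --- observing that the simplicial structure of $EK$ for $K=G\ltimes H$ uses only the underlying set, so $EK\cong EG\times EH$, quotienting first by $H$ and then by the residual free diagonal $G$-action to get $B(G\ltimes H)\cong (EG\times BH)/G$, and finally applying the Eilenberg--Zilber equivalence together with its natural (hence $G$-equivariant) homotopies before passing to coinvariants --- is the standard proof and is essentially the one in Suslin's reference; the point you flag about the semidirect-product convention (so that right translation by $H$ touches only the $EH$ factor, and the residual $G$-action on $BH$ matches the relation defining $\otimes_G$) is indeed the only place where care is needed, and you handle it correctly.
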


Let $G=\{G_m\}$ be a pro group ($=$ a pro system of groups).
A \textit{left $G$-module $M$} is a pro abelian group $M=\{M_m\}$ with a level map $G\times M\to M$ which exhibits each $M_m$ as a left $G_m$-module.
A \textit{morphism between left $G$-modules $M=\{M_m\}$ and $N=\{N_m\}$} is a level map $f\colon M\to N$ such that each $f_m\colon M_m\to N_m$ is a morphism of left $G_m$-modules.
These form the category of left $G$-modules, and we consider simplicial objects in this category; simplicial left $G$-modules and morphisms between them.

\begin{corollary}\label{cor:semidirect}
Let $G$ be a pro group.
Let $P$ and $Q$ be simplicial left $G$-modules and $f\colon P \to Q$ a morphism between them.
Suppose that $f$ induces pro isomorphisms
\[
	\pi_n(P) \xrightarrow{\sim} \pi_n(Q)
\]
for all $n\ge 0$.
Then $f$ induces pro isomorphisms
\[
	H_n(\Tot C_*(B(G\ltimes P))) \xrightarrow{\sim} H_n(\Tot C_*(B(G\ltimes Q)))
\]
for all $n\ge 0$, where the semi-direct products are taken levelwise and degreewise.
\end{corollary}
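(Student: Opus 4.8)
The plan is to use Lemma \ref{lem:semidirect} to rewrite both sides in terms of the equivariant chains $C_*(EG)\otimes_G C_*(B(-))$, and then to reduce the comparison to Corollary \ref{cor:proC_*} by a first-quadrant spectral sequence argument carried out in the abelian category of pro abelian groups.

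First I would apply Lemma \ref{lem:semidirect} to the group $P_{k,m}$ for each pro-index $m$ and each simplicial degree $k$, obtaining natural quasi-isomorphisms
\[
	C_*\bigl(B(G_m\ltimes P_{k,m})\bigr) \simeq C_*(EG_m)\otimes_{G_m}C_*(BP_{k,m}).
\]
Since Lemma \ref{lem:semidirect} is natural in $P_{k,m}$, these are compatible with the simplicial structure in $k$ (the faces and degeneracies of $P$ being $G$-equivariant homomorphisms) and with $f$. As everything is concentrated in non-negative degrees, totalizing over $k$ preserves this quasi-isomorphism and the resulting multi-complexes may be rearranged freely, so I would obtain a pro quasi-isomorphism
\[
	\Tot C_*\bigl(B(G\ltimes P)\bigr) \xrightarrow{\ \sim\ } \Tot\bigl(C_*(EG)\otimes_G C_*(BP)\bigr)
\]
compatible with $f$, where $C_*(BP)$ is the double complex of the bi-simplicial set $BP$, so the right-hand side is the total complex of a triple complex concentrated in non-negative multidegrees. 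Hence it suffices to prove that $f$ induces pro isomorphisms on $H_n\bigl(\Tot(C_*(EG)\otimes_G C_*(BP))\bigr)$.

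Next I would put $N_\bullet:=\Tot C_*(BP)$, a bounded-below complex of pro $G$-modules, and form the double complex $C_p(EG)\otimes_G N_q$, whose total complex is $\Tot(C_*(EG)\otimes_G C_*(BP))$. Because each $C_p(EG)$ is a free $\mbb{Z}G$-module, taking homology first in the $N$-direction yields a first-quadrant spectral sequence in the category of pro abelian groups
\[
	E^2_{p,q}=H_p\bigl(G;H_q(\Tot C_*(BP))\bigr)\Longrightarrow H_{p+q}\bigl(\Tot(C_*(EG)\otimes_G C_*(BP))\bigr),
\]
and likewise for $Q$, with $f$ inducing a morphism of spectral sequences. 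By Corollary \ref{cor:proC_*}, $f$ induces pro isomorphisms $H_q(\Tot C_*(BP))\xrightarrow{\sim}H_q(\Tot C_*(BQ))$, whose kernels and cokernels, formed levelwise, are pro-zero pro $G$-modules. For such a pro $G$-module $\mbf{K}=\{K_m\}$ one picks for each $m$ an index $s\ge m$ with the transition map $K_s\to K_m$ zero; this map remains zero after applying $H_p(G_s;-)$ and then changing groups along $G_s\to G_m$, so $\{H_p(G_m;K_m)\}_m$ is pro-zero. Via the long exact sequence in group homology it follows that $H_p(G;-)$ preserves pro isomorphisms; hence $f$ is a pro isomorphism on $E^2$, therefore on $E^\infty$, and therefore — by the finite filtration on each $H_n$ of the abutment — on $H_n\bigl(\Tot(C_*(EG)\otimes_G C_*(BP))\bigr)$. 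Combined with the first step, this proves the corollary.

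The only real work is the organizational first step: verifying that Lemma \ref{lem:semidirect}, which concerns a single group, assembles naturally over the simplicial degrees of $P$ and over the pro-index into one pro quasi-isomorphism compatible with $f$, and that the total complexes of the relevant multi-complexes (all in non-negative multidegrees) can be rearranged at will. Once the statement has been recast in terms of $C_*(EG)\otimes_G(-)$, the rest is a formal spectral-sequence argument, whose only delicate point is that it is performed in the abelian category of pro abelian groups, where first-quadrant spectral sequences converge and pro-zero objects are both detected and preserved by passing to a later index at which a transition map vanishes.
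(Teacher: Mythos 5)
Your proposal is correct and is exactly the argument the paper intends: the paper's own proof is the one-line "This follows from Corollary \ref{cor:proC_*} and Lemma \ref{lem:semidirect}", and your write-up fills in the standard details (naturality of Lemma \ref{lem:semidirect} over the simplicial degree and the pro index, the hyperhomology spectral sequence $H_p(G;H_q(\Tot C_*(BP)))$, and the fact that $H_p(G;-)$ preserves pro isomorphisms because pro-zero coefficients give pro-zero group homology). No gaps.
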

\begin{proof}
This follows from Corollary \ref{cor:proC_*} and Lemma \ref{lem:semidirect}.
\end{proof}

\subsection{The key lemma}\label{keylem}

\subsubsection{Triangular spaces}\label{triangular}
Let $A$ be a ring and $P$ a left $A$-module.
Let $\sigma$ be a finite poset.
We define a group $T^\sigma(A,P)$ by
\[
	T^\sigma(A,P) := \prod_{i<_\sigma j,\,j\notin\max\sigma} A_{(i,j)} \times \prod_{i<_\sigma j, \,j\in\max\sigma} P_{(i,j)},
\]
where $A_{(i,j)}$ and $P_{(i,j)}$ are just the copies of $A$ and $P$ respectively.
For $\alpha\in T^\sigma(A,P)$, we denote its $(i,j)$-th component by $\alpha_{i,j}$; thus $\alpha_{i,j}\in A$ if $j\notin\max\sigma$, and $\alpha_{i,j}\in P$ if $j\in\max\sigma$.
For $\alpha,\beta\in T^\sigma(A,P)$, we define the composition $\alpha\cdot\beta$ by
\[
	(\alpha\cdot\beta)_{i,j} = \alpha_{i,j} + \beta_{i,j} + \sum_{i<_\sigma k <_\sigma j} \alpha_{i,k} \beta_{k,j}
\]
for $i<_\sigma j$.
We shorten $T^\sigma(A) := T^\sigma(A,A)$.

Set $\sigma_0:=\sigma\setminus\max\sigma$.
Then we have an identification
\[
	T^\sigma(A,P) = T^{\sigma_0}(A) \ltimes M_{\sigma_0,\max\sigma}(P),
\]
and canonical inclusion and projection
\[
	T^{\sigma_0}(A) \hookrightarrow T^\sigma(A,P) \twoheadrightarrow T^{\sigma_0}(A).
\]

Let $\theta\colon\sigma\to\tau$ be a morphism of finite posets.
Then it induces a morphism of groups 
\[
	T^\theta\colon T^\sigma(A) \to T^\tau(A).
\]
If $\theta^{-1}(\max\tau)=\max(\sigma)$, then it also induces a morphism $T^\sigma(A,P) \to T^\tau(A,P)$ for any left $A$-module $P$, which we also denote by $T^\theta$.

Let $f\colon P\to Q$ be a morphism of $A$-modules.
Then it induces a morphism of groups
\[
	T^f\colon T^\sigma(A,P) \to T^\sigma(A,Q)
\]
If $\theta\colon\sigma\to\tau$ satisfies $\theta^{-1}(\max\tau)=\max(\sigma)$, then we define 
\[
	T^{f,\theta}\colon T^\sigma(A,P)\to T^\tau(A,Q)
\]
to be the composite $T^f\circ T^\theta = T^\theta\circ T^f$.

\subsubsection{}
For a finite poset $\sigma$ and $p\ge 0$, let $[p]$ be the poset $0<1<2<\dotsb<p$ and endow $\sigma\times [p]$ with the lexicographical order.
We define
\[
	\sigma\bigstar [p] := \sigma\times [p] \setminus \max\sigma\times\{1,\dotsc,p\}.
\]
We denote by $\phi$ (resp.\ $\varphi$) the embedding $\sigma \to \sigma\times [p]$ (resp.\ $\sigma \to \sigma\bigstar [p]$), $a\mapsto (a,0)$.
Note that $\varphi^{-1}(\max(\sigma\bigstar [p]))=\max\sigma$ and that $(\sigma\bigstar [p])_0 = \sigma_0\times [p]$.

The following lemma is a variant of Lemma 7.4 in \cite{Su82}.
\begin{lemma}\label{lem:key}
Let $\{A_m\}_{m\in\Xi}$ be a commutative Tor-unital pro ring and $l\ge 0$.
Then there exist $p_l\ge 0$ and $s_l(m)\ge m$ for each $m\in\Xi$ such that:
\begin{enumerate}[(i)]
\item For all finite posets $\sigma$ and all pseudo-free $\{A_m\}$-modules $\{P_m\}$, the map
\[
	\iota_{s_l(m),m}H_l(T^\varphi) \colon \tilde{H}_l(T^\sigma(A_{s_l(m)},P_{s_l(m)}) \to \tilde{H}_l(T^{\sigma\bigstar [p_l]}(A_m,P_m))
\]
is equal to zero.
\item For all finite posets $\sigma$ and all special morphisms $f\colon \{P_m\}\to \{Q_m\}$ between pseudo-free $\{A_m\}$-modules, the map
\[
	\iota_{s_l(m),m}H_l(T^{f,\varphi}) \colon \tilde{H}_l(T^\sigma(A_{s_l(m)},P_{s_l(m)}) \to \tilde{H}_l(T^{\sigma\bigstar [p_l]}(A_m,Q_m))
\]
is equal to zero.
\end{enumerate}
\end{lemma}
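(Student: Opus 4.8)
The natural plan is to argue by induction on $l$, establishing (i) and (ii) at once. The case $l=0$ is immediate: the reduced zeroth homology of the classifying space of any group vanishes, so one may take $p_0=0$ and $s_0(m)=m$. For the inductive step I assume $p_{l'}$ and $s_{l'}$ have been produced for all $l'<l$, fix a finite poset $\sigma$, and set $\sigma_0=\sigma\setminus\max\sigma$. The organizing tool is the semidirect decomposition $T^\sigma(A,P)=T^{\sigma_0}(A)\ltimes M_{\sigma_0,\max\sigma}(P)$ together with its analogue $T^{\sigma\bigstar[q]}(A,P)=T^{\sigma_0\times[q]}(A)\ltimes M_{\sigma_0\times[q],\max\sigma}(P)$ (recalling $(\sigma\bigstar[q])_0=\sigma_0\times[q]$ and $\max(\sigma\bigstar[q])=\max\sigma$). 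By Lemma \ref{lem:semidirect} the homology of each of these groups is computed by a complex of the form $C_*(ET^{\sigma_0}(A))\otimes_{T^{\sigma_0}(A)}C_*(BM)$, and the comparison maps $T^\varphi$ and $T^{f,\varphi}$ respect this presentation; this yields a Lyndon--Hochschild--Serre spectral sequence with $E^2_{p,q}=H_p(T^{\sigma_0}(A);H_q(BM_{\sigma_0,\max\sigma}(P)))$ converging to $H_{p+q}(T^\sigma(A,P))$, natural in all of the data.

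I would then try to dispose of the coefficient module. The abelian group $M:=M_{\sigma_0,\max\sigma}(P)$ is a pseudo-free $A$-module, and the groups $H_q(BM)$ are built functorially from $M$ (by exterior powers, up to torsion corrections); replacing $M$ by a pro resolution by pseudo-free modules (Lemma \ref{lem:proresol}) and running it through the spectral sequence, Proposition \ref{prop2:TorUnital} should pro-annihilate all contributions with $q>0$ after a suitable index shift, while the inductive hypothesis applied in degrees $p<l$ to the poset $\sigma_0$ (here the uniformity of $s_{l'}$ in $m$ is what makes the bookkeeping possible) pro-annihilates the contributions with $0<p<l$. What remains in total degree $l$ is the edge term $E^2_{l,0}=H_l(T^{\sigma_0}(A))$ with trivial coefficients, i.e.\ an instance of the same problem for a poset of strictly smaller ``complexity''. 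One then closes by a nested induction on this complexity, the smallest cases ($\sigma$ empty or a single point) being trivial.

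The genuinely new ingredient, and the heart of the proof, is the explicit chain-level construction that feeds the nested induction, and it is exactly this that forces the fattening $\sigma\bigstar[p]$ with $p$ large: with enough ``parallel levels'' available in the poset, a degree-$l$ cycle on $T^\sigma(A,P)$ can be transported to $T^{\sigma\bigstar[p]}(A,P)$ and rewritten there --- by repeated use of the elementary commutator identities recorded in the proof of Lemma \ref{lem:Tits} --- as a boundary plus cycles supported on proper sub-posets plus cycles coming (via special morphisms) from pseudo-free coefficient modules annihilated by $A$; the last two kinds of terms are then killed by the inductive hypothesis and by Proposition \ref{prop2:TorUnital}, and $p_l$ is the number of levels the rewriting consumes, maximized over the finitely many relevant shapes of $\sigma$. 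The principal obstacles are two. First, carrying out this ``dispersal'' explicitly and uniformly in $\sigma$ and in the pseudo-free module, which is the analogue for pro rings of Suslin's \cite[Lemma 7.4]{Su82} and is where the commutative hypothesis is (if at all) used. Second, organizing the induction so that the two operations $\sigma\mapsto\sigma_0$ (passing to the non-maximal part) and $\sigma\mapsto\sigma\bigstar[p]$ (fattening), and the various $\times[q]$-fattenings they generate, interact controllably, so that a single constant $p_l$ and a single function $s_l(m)$ serve for all finite posets and all pseudo-free modules at once --- for this it is essential that Proposition \ref{prop2:TorUnital} delivers its index shift uniformly over all admissible augmented complexes.
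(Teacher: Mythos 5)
There is a genuine gap, on two counts. First, your reduction of the coefficient module does not work as stated: in the Lyndon--Hochschild--Serre spectral sequence $E^2_{p,q}=H_p(T^{\sigma_0}(A);H_q(BM))$ with $M=M_{\sigma_0,\max\sigma}(P)$, the rows with $q>0$ are built from $M$ and its exterior powers (e.g.\ $H_1(BM)=M$), and these are pseudo-free, hence nowhere near pro-zero; Proposition \ref{prop2:TorUnital} only pro-annihilates the homology of augmented complexes of pseudo-free modules whose homology is killed by $A$, and cannot touch these terms. The paper avoids this by resolving $P$ itself (Lemma \ref{lem:proresol}) and using the spectral sequence in the \emph{resolution} direction, $(E^1_{s,t})=H_t(T^\sigma(A,P[s]))\Rightarrow H_{s+t}(T^\sigma(A,P[-_{\ge 0}]))$: the filtration pieces with $s\ge 1$ sit in homological degree $L-s<L$ of the groups $T^\sigma(A,P[s])$ with $P[s]$ pseudo-free and are killed by the inductive hypothesis in $l$ after fattening by $p_{L-s}$, while Proposition \ref{prop:TorUnital} and Corollary \ref{cor:semidirect} are used only for the comparison $H_L(T^\sigma(A,P[-_{\ge 0}]))\simeq H_L(T^\sigma(A,P))$. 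What survives is the image of $H_L(T^\sigma(A,P[0]))$ under the \emph{special} augmentation $\epsilon$.

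Second, and more seriously, the step you defer as an ``obstacle'' is the actual content of the lemma: one must show that for any special morphism $f$ the map $\iota_{t(m),m}H_L(T^{f,\varphi})$ factors, after a single fattening $\sigma\bigstar[p_{L-1}+1]$, through the projection $T^\sigma(A,P)\twoheadrightarrow T^{\sigma_0}(A)$ and the inclusion $T^{\sigma_0\times[q]}(A)\hookrightarrow T^{\sigma\bigstar[q]}(A,Q)$ (Sublemma \ref{sublem:key}); only then does the nested induction on $\#\sigma$ (via $\sigma\mapsto\sigma_0$) close. The mechanism is not a chain-level rewriting by the commutator identities of Lemma \ref{lem:Tits}: it is an induction on the matrix entries $\alpha_{u,v}$ of $f$, in which one introduces a second embedding $\psi\colon\sigma\to\sigma\bigstar[q]$ meeting $\sigma\bigstar[q-1]$ only in $\max\sigma$, forms the product homomorphism $T^{\alpha,\varphi}\cdot T^{\beta,\psi}$, deduces $H_L(T^{\alpha,\varphi}\cdot T^{\beta,\psi})=H_L(T^{\alpha,\varphi})+H_L(T^{\beta,\psi})$ from the K\"unneth formula together with the inductive hypothesis in degrees $<L$, and then conjugates by an explicit element $\omega$ to replace $\alpha$ by the matrix $\alpha'$ with the $(u,v)$ entry deleted --- this conjugation identity is the one place where commutativity of $A$ is used. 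Concluding $H_L(T^{\alpha,\varphi})=H_L(T^{\alpha',\varphi})$ and iterating reduces to $\alpha=0$, whence the factorization. None of this is in your proposal, so what you have is a plausible skeleton with the load-bearing argument missing.
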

\begin{proof}
We prove the lemma by induction on $l\ge 0$.
The case $l=0$ is clear, here we can take $p_0=0$ and $s_0(m)=m$.
Let $L>0$ and suppose that we have constructed $p_0\le p_1\le \dotsb \le p_{L-1}$ and $s_0(m)\le s_1(m)\le \dotsb \le s_{L-1}(m)$ which satisfy the conditions of the lemma.

Set $q:=p_{L-1}+1$ and $t(m):=s_{L-1}(m)$.
First, we prove the following.
\begin{sublemma}\label{sublem:key}
For all finite posets $\sigma$ and all special morphisms $f\colon \{P_m\}\to \{Q_m\}$ between pseudo-free $\{A_m\}$-modules, the diagram
\[
\xymatrix@C+2pc{
	H_L(T^\sigma(A_{t(m)},P_{t(m)}) \ar[r]^-{\iota_{t(m),m}H_l(T^{f,\varphi})} \ar@{->>}[d] & H_L(T^{\sigma\bigstar [q]}(A_m,Q_m)) \\
	H_L(T^{\sigma_0}(A_{t(m)}) \ar[r]^-{\iota_{t(m),m}H_l(T^\phi)}							& H_L(T^{\sigma_0\times [q]}(A_m)) \ar@{^{(}->}[u]
}
\]
commutes, where the vertical maps are the canonical projection and inclusion.
\end{sublemma}
\begin{proof}
Let $f\colon\{P_m\}\to\{Q_m\}$ be a special morphism between pseudo-free $\{A_m\}$-modules and $\{L_m\}$ a free basis of $\{P_m\}$ such that $f$ is induced from a map $\{L_m\}\to\{Q_m\}$.
Note that we have an equality $\{\varinjlim_i L_m^{(i)}\} = \{L_m\}$, where $\{L_m^{(i)}\}$ is a sub-system of $\{L_m\}$ such that each $L_m^{(i)}$ is finitely generated and the limit runs around all such systems.
Hence, we have
\[
	 \varinjlim_i C_*(BM_{n,k}(A_m\otimes L_m^{(i)})) \simeq C_*(BM_{n,k}(P_m))
\]
for every $m$ and $n,k\ge 1$.
It follows that
\[
	C_*(BT^\sigma(A_m,P_m)) \simeq \varinjlim_i C_*(BT^\sigma(A_m,A_m\otimes L_m^{(i)}))
\]
and
\[
	H_*(T^\sigma(A_m,P_m)) \simeq \varinjlim_i H_*(T^\sigma(A_m,A_m\otimes L_m^{(i)})).
\]
Consequently, to show the sublemma, we may assume that $\{P_m\}=\{A_m\otimes_{\mbb{Z}} L_m\}$ with $L_m$ a free abelian group of finite rank.
We may also assume that $\{Q_m\} = \{A_m\otimes_{\mbb{Z}} M_m\}$ with $M_m$ a free abelian group of finite rank.

Fix $m\in\Xi$.
Let $e_1,\dotsc,e_I$ be a basis of $L_{t(m)}$ and $f_1,\dotsc,f_J$ a basis of $M_m$.
Since $f$ is special, the map $\iota_{t(m),m}f \colon P_{t(m)}\to Q_m$ is induced by a map $\alpha\colon L_{t(m)}\to Q_m$, which sends $e_i$ to $\sum_j \alpha_{i,j} f_j$ with $\alpha_{i,j}\in A_m$.
We may also denote $\iota_{t(m),m}f$ by $\alpha$.

If $\alpha=0$, then the diagram
\[
\xymatrix@C+2pc{
	H_L(T^\sigma(A_{t(m)},P_{t(m)}) \ar[r]^{\iota_{t(m),m}H_l(T^{f,\varphi})} \ar@{->>}[d] 	& H_L(T^{\sigma\bigstar [q]}(A_m,Q_m)) \\
	H_L(T^{\sigma_0}(A_{t(m)}) \ar[r]^{\iota_{t(m),m}H_l(T^\phi)}						& H_L(T^{\sigma_0\times [q]}(A_m)) \ar@{^{(}->}[u]
}
\]
commutes, and thus the sublemma holds in this case.
Let $(u,v)\in [1,I]\times[1,J]$ and suppose that the sublemma holds if $\alpha_{i,j} = 0$ for $(i,j) \ge (u,v)$ with respect to the lexicographical order.
We prove the sublemma in case $\alpha_{i,j} = 0$ for $(i,j)>(u,v)$.
We define a morphism $\beta\colon P_{t(m)}\to Q_m$ by sending $e_i$ to $\delta_{i,u}f_v$.

We define an embedding $\psi \colon \sigma \to \sigma\bigstar [q]$ by
\[
	\psi(x) = \begin{cases} 	(x,0) &\text{if }x\in\max\sigma \\
								(x,q) &\text{if }x\notin\max\sigma. \end{cases}
\]
Then the image $\tau$ of $\psi$ intersects with $\sigma\bigstar [q-1]\subset \sigma\bigstar [q]$ only at $\max\sigma\times\{0\}$, and thus the composite
\[
\xymatrix@1{
	T^\sigma \ar[r]^-{\text{diag}} & T^\sigma\times T^\sigma \ar[r]^-{T^\varphi\times T^\psi} & T^{\sigma\bigstar [q-1]} \times T^\tau \ar[r]^-{\text{prod}} & T^{\sigma\bigstar [q]}
}
\]
is a group homomorphism.
Now, we have a group morphism
\[
\xymatrix@R-2.5pc{
			& T^{\sigma\bigstar [q-1]}(A_m,Q_m) \ar@{}[dd]|\times \ar[rd] & \\
	T^\sigma(A_{t(m)},P_{t(m)}) \ar[ru]^-{T^{\alpha,\varphi}} \ar[rd]_-{T^{\beta,\psi}} & & T^{\sigma\bigstar [q]}(A_m,Q_m), \\
			& T^\tau(A_m,Q_m) \ar[ur] &
}
\]
which we denote by $T^{\alpha,\varphi}\cdot T^{\beta,\psi}$.
Since $q-1=p_{L-1}$ and $t(m)=s_{L-1}(m)$, by the induction hypothesis and by the K\"unneth formula, we obtain
\begin{equation}\label{eq1:key}
	H_L(T^{\alpha,\varphi}\cdot T^{\beta,\psi}) = H_L(T^{\alpha,\varphi}) + H_L(T^{\beta,\psi}).
\end{equation}

Set
\[
	\omega := \prod_{x\in \sigma_0} e_{\varphi(x),\psi(x)}(\alpha_{u,v}) \in T^{\sigma\bigstar [q]}(A_m).
\]
We define $(\alpha'_{i,j})\in M_{I,J}(A^m)$ by $\alpha'_{i,j}=\alpha_{i,j}$ unless $(i,j)=(u,v)$ and $\alpha'_{u,v}=0$,
which induces a map $\alpha'\colon Q_{t(m)}\to P_m$ by sending $e_i$ to $\sum_j \alpha_{i,j}'f_j$.
\begin{claim}
We have an equality \footnotemark
\footnotetext{Here is the only place we need the commutativity of pro rings}
\[
	\Ad(\omega)\circ(T^{\alpha',\varphi}\cdot T^{\beta,\psi}) = T^{\alpha,\varphi}\cdot T^{\beta,\psi}.
\]
\end{claim}
We calculate the $(k,l)$-entry of (\ref{eq2:key}) at $U\in T^\sigma(A_{t(m)},P_{t(m)})$.
It suffices to do this for;
\begin{enumerate}[(1)]
\item $(k,l)=(\varphi(x),\varphi(y))$ with $x\in\sigma_0$ and $y\in\sigma$.
\item $(k,l)=(\varphi(x),\psi(y))$ with $x\in\sigma_0$ and $y\in\sigma_0$.
\item $(k,l)=(\psi(x),\psi(y))$ with $x\in\sigma_0$ and $y\in\sigma_0$.
\item $(k,l)=(\psi(x),\varphi(y))$ with $x\in\sigma_0$ and $y\in\sigma$.
\end{enumerate}
Case (1):
\[
\begin{split}
	&\bigl(\Ad(\omega)\circ(T^{\alpha',\varphi} \cdot T^{\beta,\psi})(U)\bigr)_{\varphi(x),\varphi(y)} \\
	&= \bigl((T^{\alpha',\varphi}\cdot T^{\beta,\psi})(U)\bigr)_{\varphi(x),\varphi(y)} + \alpha_{u,v}\cdot \bigl((T^{\alpha',\varphi}\cdot T^{\beta,\psi})(U)\bigr)_{\psi(x),\varphi(y)}  \\
	&= T^{\alpha',\varphi}(U)_{\varphi(x),\varphi(y)} + \alpha_{u,v} \cdot T^{\beta,\psi}(U)_{\psi(x),\varphi(y)} \\ 
	&= \begin{cases} U_{x,y} &\text{if }y\in\sigma_0 \\ \alpha'(U_{x,y})+\beta(U_{x,y})\alpha_{u,v} = \alpha(U_{x,y}) &\text{if }y\in\max\sigma \end{cases} \\
	&= \bigl((T^{\alpha,\varphi}\cdot T^{\beta,\psi})(U)\bigr)_{\varphi(x),\varphi(y)}.
\end{split}
\]
Case (2):
\[
\begin{split}
	&\bigl((\Ad(\omega)\circ(T^{\alpha',\varphi}\cdot T^{\beta,\psi})(U)\bigr)_{\varphi(x),\psi(y)} \\
	&= \alpha_{u,v}\cdot\bigl((T^{\alpha',\varphi}\cdot T^{\beta,\psi})(U)\bigr)_{\varphi(x),\varphi(y)} - \bigl((T^{\alpha',\varphi}\cdot T^{\beta,\psi})(U)\bigr)_{\psi(x),\psi(y)}\cdot\alpha_{u,v} \\
	&= \alpha_{u,v}U_{x,y} - U_{x,y}\alpha_{u,v} \\
	&= 0 \\
	&= \bigl((T^{\alpha,\varphi}\cdot T^{\beta,\psi})(U)\bigr)_{\varphi(x),\psi(y)}.
\end{split}
\]
Case (3):
\[
\begin{split}
	&\bigl(\Ad(\omega)\circ(T^{\alpha',\varphi}\cdot T^{\beta,\psi})(U)\bigr)_{\psi(x),\psi(y)} \\
	&= \bigl((T^{\alpha',\varphi}\cdot T^{\beta,\psi})(U)\bigr)_{\psi(x),\psi(y)} - \bigl((T^{\alpha',\varphi}\cdot T^{\beta,\psi})(U)\bigr)_{\psi(x),\phi(y)}\cdot\alpha_{u,v} \\
	&= \bigl((T^{\alpha,\varphi}\cdot T^{\beta,\psi})(U)\bigr)_{\psi(x),\psi(y)}.
\end{split}
\]
Case (4):
\[
\begin{split}
	\bigl((\Ad(\omega)\circ(T^{\alpha',\varphi}\cdot T^{\beta,\psi})(U)\bigr)_{\psi(x),\varphi(y)} 	&= \bigl((T^{\alpha',\varphi}\cdot T^{\beta,\psi}(U)\bigr)_{\psi(x),\varphi(y)} \\
																					&= \bigl((T^{\alpha',\varphi}\cdot T^{\beta,\psi}(U)\bigr)_{\psi(x),\varphi(y)}.
\end{split}
\]
Consequently, we obtain the equality in the claim.

Again, by the induction hypothesis and by the K\"unneth formula, we obtain
\begin{equation}\label{eq2:key}
\begin{split}
	H_L(T^{\alpha,\varphi}\cdot T^{\beta,\psi}) 	&= H_L(T^{\alpha',\varphi}\cdot T^{\beta,\psi}) \\
											&= H_L(T^{\alpha',\varphi}) + H_L(T^{\beta,\psi}).
\end{split}
\end{equation}
It follows from (\ref{eq1:key}, \ref{eq2:key}) that
\[
	H_L(T^{\alpha,\varphi}) = H_L(T^{\alpha',\varphi}).
\]
Therefore, by induction, we get the sublemma.
\end{proof}

We return to the proof of Lemma \ref{lem:key}.
We prove $(i)_{l=L}$.
Let $\{P_m\}$ be a pseudo-free $\{A_m\}$-module.
Let $\{P_m[-]\}$ be a pro resolution of $\{P_m\}$, cf.\ Lemma \ref{lem:proresol}.
Then, by Proposition \ref{prop:TorUnital} and Corollary \ref{cor:semidirect}, $\{P_m[-_{\ge 0}]\}\to \{P_m\}$ induces a pro isomorphism
\[
	\Theta \colon \{H_L(T^\sigma(A_m,P_m[-_{\ge 0}]))\}_m \xrightarrow{\sim} \{H_L(T^\sigma(A_m,P_m))\}_m.
\]
In fact, for each $m\in\Xi$ there exists $r(m)\ge m$, which does not depend on $\{P_m\}$, $\{P_m[-]\}$ and $\sigma$, such that the maps $\iota_{r(m),m}\colon \ker\Theta_{r(m)}\to \ker\Theta_{m}$ and $\iota_{r(m),m}\colon \coker\Theta_{r(m)}\to \coker\Theta_{m}$ are equal to zero, cf.\ Proposition \ref{prop2:TorUnital}.

We set
\begin{gather*}
	p:=p_L:= \Bigl(\prod_{l=1}^{L-1}(p_l+1)\Bigr) (q+1)-1, \\
	s(m):=s_L(m):=r(s_1(\dotsb(s_{L-1}(t(m)))\dotsb)).
\end{gather*}
We claim that $(i)_{l=L}$ holds with these definitions.
We prove it by induction on $n:=\#\sigma\ge 1$.
The case $n=1$ is clear, and so let $n>1$.

By Lemma \ref{lem:semidirect}, we have
\[
	C_*(BT^\sigma(A_m,P_m[-_{\ge 0}]) = C_*(ET^{\sigma_0}(A_m))\otimes_{T^{\sigma_0}(A_m)}C_*(BM_{\sigma_0,\max\sigma}(P_m[-_{\ge 0}]))
\]
and thus we have a first quadrant spectral sequence
\[
	(E^1_{s,t})^\sigma_m = H_t(T^\sigma(A_m,P_m[s])) \Rightarrow H_{s+t}(T^\sigma(A_m,P_m[-_{\ge 0}])).
\]
It is clear that $(E^2_{s,0})^\sigma_m=0$ for $s>0$.
Hence, the spectral sequence induces a filtration
\[
	0=F_{-1,m}^\sigma \subset F_{0,m}^\sigma \subset \dotsb \subset F_{L-1,m}^\sigma=H_L(T^\sigma(A_m,P_m[-_{\ge 0}]))
\]
with $F_{i,m}^\sigma/F_{i-1,m}^\sigma$ a subquotient of $H_{L-i}(T^\sigma(A_m,P_m[i]))$.

Note that the map $\varphi\colon \sigma\to \sigma\bigstar [p]$ induces a morphism of spectral sequences 
\[
\xymatrix@R-1pc@C-1pc{
	(E^1_{s,t})^\sigma_m = H_t(T^\sigma(A_m,P_m[s])) \ar@{=>}[r] \ar[d] 					 & H_{s+t}(T^\sigma(A_m,P_m[-_{\ge 0}])) \ar[d] \\
	(E^1_{s,t})^{\sigma\bigstar [p]}_m = H_t(T^{\sigma\bigstar [p]}(A_m,P_m[s])) \ar@{=>}[r] & H_{s+t}(T^{\sigma\bigstar [p]}(A_m,P_m[-_{\ge 0}])).
}
\]
By the induction hypothesis, the induced map
\[
	F_{i,s_{L-i}(m)}^\sigma/F_{i-1,s_{L-i}(m)}^\sigma \to F_{i,m}^{\sigma\bigstar [p_{L-i}]}/F_{i-1,m}^{\sigma\bigstar [p_{L-i}]}
\]
is zero for $1\le i\le L-1$.
Also, observe that $(\sigma\bigstar [a])\bigstar [b] = \sigma\bigstar [(a+1)(b+1)-1]$.
It follows that, by putting $s'(m):=s_1(\dotsb(s_{L-1}(t(m)))\dotsb)$ and $p':=\prod_{l=1}^{L-1}(p_l+1)-1$, the canonical map
\[
	\iota_{s'(m),t(m)}H_l(T^\varphi)\colon H_L(T^\sigma(A_{s'(m)},P_{s'(m)}[-_{\ge 0}])) \to H_L(T^{\sigma\bigstar[p']}(A_{t(m)},P_{t(m)}[-_{\ge 0}]))
\]
factors through $F_{0,t(m)}^{\sigma\bigstar [p']}$.

Now, we have lifts in the commutative diagram
\[
\xymatrix@C-1pc{
	& & H_L(T^\sigma(A_{s(m)},P_{s(m)})) \ar[d]^{\iota_{s(m),s'(m)}} \ar@{.>}[dl] \\
	F_{0,s'(m)}^\sigma \ar@{^{(}->}[r] \ar[d] & H_L(T^\sigma(A_{s'(m)},P_{s'(m)}[-_{\ge 0}])) \ar[r]^-\Theta \ar[d] \ar@{.>}[ld]
		& H_L(T^\sigma(A_{s'(m)},P_{s'(m)})) \ar[d]^{\iota_{s'(m),t(m)}H_L(T^\varphi)} \\
	F_{0,t(m)}^{\sigma\bigstar[p']} \ar@{^{(}->}[r] \ar@/_/@{.>}[d] & H_L(T^{\sigma\bigstar[p']}(A_{t(m)},P_{t(m)}[-_{\ge 0}])) \ar[r]^-\Theta 
		& H_L(T^{\sigma\bigstar[p']}(A_{t(m)},P_{t(m)})) \ar@{=}[d] \\
	H_L(T^{\sigma\bigstar[p']}(A_{t(m)},P_{t(m)}[0])) \ar@{->>}[u] \ar[rr]^-{H_L(T^{\epsilon,\varphi})} & & H_L(T^{\sigma\bigstar[p']}(A_{t(m)},P_{t(m)})).
}
\]
Consider the following diagram
\[
\xymatrix{
	& H_L(T^{\sigma\bigstar[p']}(A_{t(m)},P_{t(m)}[0])) \ar[d] \ar[r]^-{H_L(T^{\epsilon,\varphi})} & H_L(T^{\sigma\bigstar[p]}(A_m,P_m)) \ar@{=}[d] \\
	H_L(T^\sigma(A_{s(m)},P_{s(m)})) \ar[d] \ar[r]_-{H_L(T^\varphi)} \ar@{.>}[ru] & H_L(T^{\sigma\bigstar[p']}(A_{t(m)},P_{t(m)})) \ar[d] \ar[r]_-{H_L(T^\varphi)}
		& H_L(T^{\sigma\bigstar[p]}(A_m,P_m)) \\
	H_L(T^{\sigma_0}(A_{s(m)})) \ar[r]_-{H_L(T^\phi)} & H_L(T^{\sigma_0\times[p']}(A_{t(m)})) \ar[r]_-{H_L(T^{\phi})} & H_L(T^{\sigma_0\times[p]}(A_m)) \ar[u]
}
\]
where we omit the structured maps $\iota_{*,*}$.
The right rectangle commutes by Sublemma \ref{sublem:key}, though the lower right square may not commute.
It follows from a simple diagram chase that the bottom rectangle commutes.
Therefore, by the induction hypothesis for $n=\#\sigma$, the middle composite $\iota_{s(m),m}H_L(T^\varphi)$ equals zero.

Finally, $(ii)_{l=L}$ follows immediately from $(i)_{l=L}$ and Sublemma \ref{sublem:key}.
\end{proof}

\subsubsection{}
Let $\sigma$ be a partial ordering on $\{1,\dotsc,n\}$.
Then we can naturally regard $T^\sigma(A)$ as a subgroup of $\GL_n(A)$.
For $k\ge 0$, we define ${}^k\tilde{\sigma}$ to be the partial ordering on $\{1,\dotsc,n+k\}$ obtained from $\sigma$ by adding the relations $i<n+j$ for $i\in\{1,\dotsc,n\}$ and $1\le j\le k$.
Set
\[
	{}^k\tilde{T}^\sigma(A,P) := \begin{cases} T^\sigma(A) &\text{if }k=0 \\ T^{{}^k\tilde{\sigma}}(A,P) &\text{if }k\ge 1 \end{cases}
							   = T^\sigma(A)\ltimes M_{n,k}(P).
\]
We write $\Pi_n$ for the set of all partial orderings on $\{1,\dotsc,n\}$.

\begin{corollary}\label{cor:key}
Let $\mbf{A}$ be a commutative Tor-unital pro ring and $l,n\ge 0$.
Let $\sigma_1,\dotsc,\sigma_t \in \Pi_n$.
Then there exists $p\ge 0$ such that the canonical map
\[
	\tilde{H}_l\Bigl(\bigcup_{i=1}^t B{}^k\tilde{T}^{\sigma_i}(\mbf{A},\mbf{P})\Bigr)
		\to \tilde{H}_l\Bigl(\bigcup_{i=1}^t B{}^k\tilde{T}^{\sigma_i\times [p]}(\mbf{A},\mbf{P})\Bigr)
\]
is equal to zero as a pro morphism for all $k\ge 0$ and all pseudo-free $\mbf{A}$-modules $\mbf{P}$, where the unions are taken in $B(\GL_n(\mbf{A})\ltimes M_{n,k}(\mbf{P}))$ and $B(\GL_{n(p+1)}(\mbf{A})\ltimes M_{n(p+1),k}(\mbf{P}))$ respectively.

In particular, there exists $N\ge n$ such that the canonical map
\[
	\tilde{H}_l\Bigl(\bigcup_{\sigma\in\Pi_n} B{}^k\tilde{T}^\sigma(\mbf{A},\mbf{P})\Bigr)
		\to \tilde{H}_l\Bigl(\bigcup_{\sigma\in\Pi_N} B{}^k\tilde{T}^\sigma(\mbf{A},\mbf{P})\Bigr)
\]
is equal to zero for all $k\ge 0$ and all pseudo-free $\mbf{A}$-modules $\mbf{P}$.
\end{corollary}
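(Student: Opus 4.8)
The plan is to feed Lemma~\ref{lem:key} into the Mayer--Vietoris (\v{C}ech) spectral sequence of the levelwise cover $\{X_i\}_{i=1}^t$, $X_i:=B{}^k\tilde T^{\sigma_i}(\mbf A,\mbf P)$, of $X:=\bigcup_{i=1}^t X_i$ inside $B(\GL_n(\mbf A)\ltimes M_{n,k}(\mbf P))$, and of the cover $\{X_i^{(p)}\}$, $X_i^{(p)}:=B{}^k\tilde T^{\sigma_i\times[p]}(\mbf A,\mbf P)$, of $X^{(p)}$. Here the canonical map $X\to X^{(p)}$ is the restriction of the level map $B(\GL_n(\mbf A)\ltimes M_{n,k}(\mbf P))\to B(\GL_{n(p+1)}(\mbf A)\ltimes M_{n(p+1),k}(\mbf P))$ induced by $\{1,\dotsc,n\}\hookrightarrow\{1,\dotsc,n(p+1)\}$, $x\mapsto(x,0)$, and the zero-padding $M_{n,k}\hookrightarrow M_{n(p+1),k}$; it carries each $X_i$ into $X_i^{(p)}$. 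As every space in sight is connected, $\tilde H_0$ vanishes identically and we may assume $l\ge1$, so that $\tilde H_l=H_l$ throughout.

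First I would record the combinatorics. Since the intersection of partial orders is a partial order, $\bigcap_{i\in I}X_i=B{}^k\tilde T^{\sigma_I}(\mbf A,\mbf P)$ with $\sigma_I:=\bigcap_{i\in I}\sigma_i\in\Pi_n$; likewise $\bigcap_{i\in I}(\sigma_i\times[p])=\sigma_I\times[p]$, so $\bigcap_{i\in I}X_i^{(p)}=B{}^k\tilde T^{\sigma_I\times[p]}(\mbf A,\mbf P)$. Every intersection contains the basepoint, so both covers have the full $(t-1)$-simplex $\Delta^{t-1}$ as nerve. On $\bigcap_{i\in I}X_i$ the canonical map is $BT^\varphi$ for the embedding $\varphi\colon{}^k\tilde\sigma_I\to{}^k\widetilde{\sigma_I\times[p]}$, $x\mapsto(x,0)$; identifying ${}^k\widetilde{\sigma_I\times[p]}$ with ${}^k\tilde\sigma_I\bigstar[p]$ when $k\ge1$ (both being $\sigma_I\times[p]$ with $k$ maximal elements adjoined above everything), this is exactly the map of Lemma~\ref{lem:key}(i) for the poset $\tau={}^k\tilde\sigma_I$ and the module $\mbf P$; when $k=0$ one has ${}^0\tilde T^\sigma(\mbf A,\mbf P)=T^\sigma(\mbf A)$ and the canonical map is $T^\phi$, $\phi\colon\sigma_I\to\sigma_I\times[p]$, $x\mapsto(x,0)$, which factors as $\sigma_I\xrightarrow{\varphi}\sigma_I\bigstar[p]\hookrightarrow\sigma_I\times[p]$, again reducing to Lemma~\ref{lem:key}(i) (with $\tau=\sigma_I$, module $\mbf A$).

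Next I would pick the dilation. For $0\le q\le l$ let $p_q,s_q(m)$ be as in Lemma~\ref{lem:key}; Lemma~\ref{lem:key}(i) persists under enlarging $p_q$ (compose with $\tau\bigstar[p_q]\hookrightarrow\tau\bigstar[p_q']$) and under enlarging $s_q(m)$, so we may assume $p_0\le\dotsb\le p_l$, $s_0(m)\le\dotsb\le s_l(m)$; set $p:=p_l$, $s(m):=s_l(m)$. Then for all $\sigma\in\Pi_n$, pseudo-free $\mbf P$, and $1\le q\le l$, the map
\[
 \tilde H_q\bigl(B{}^k\tilde T^\sigma(A_{s(m)},P_{s(m)})\bigr)\xrightarrow{\ \iota_{s(m),m}H_q(BT^\varphi)\ }\tilde H_q\bigl(B{}^k\tilde T^{\sigma\times[p]}(A_m,P_m)\bigr)
\]
vanishes, by Lemma~\ref{lem:key}(i) in degree $q$, the inclusion $\tau\bigstar[p_q]\hookrightarrow\tau\bigstar[p_l]$, and for $k=0$ the inclusion $\sigma\bigstar[p]\hookrightarrow\sigma\times[p]$. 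Now run the first-quadrant spectral sequence of the \v{C}ech double complex $C_{a,b}(X)=\bigoplus_{|I|=a+1}C_b(\bigcap_{i\in I}X_i)$, for which $\Tot C_{\bullet,\bullet}(X)\simeq C_\bullet(X)$ and $E^1_{a,b}(X)=\bigoplus_{|I|=a+1}H_b(\bigcap_{i\in I}X_i)\Rightarrow H_{a+b}(X)$, and likewise for $X^{(p)}$, with a comparison map induced by the canonical map. On the row $b=0$ this comparison is the identity on the simplicial chains of $\Delta^{t-1}$, so $E^2_{a,0}(X)=H_a(\Delta^{t-1})=0$ for $a\ge1$; on each row $1\le b\le l$ it is the direct sum over $I$ of the displayed map, hence pro-zero with index shift $s(m)$, uniformly in $I$, $k$, $\mbf P$. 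Since for $l\ge1$ the abutment filtration of $H_l(X)$ is finite with graded pieces the $E^\infty_{a,l-a}(X)$, and the comparison is pro-zero on each $E^\infty_{a,l-a}$ ($a\le l-1$: a subquotient of the pro-zero map on $E^1_{a,l-a}$; $a=l$: a subquotient of $E^2_{l,0}=0$), it is pro-zero on $H_l(X)=\tilde H_l(X)$. (Here I use that in the abelian category of pro-abelian groups a filtration-preserving map with pro-zero associated graded and finite filtration is pro-zero, by induction on the length: if $\pi$ is the projection of the target onto its top filtration quotient, then $\pi\circ f$ pro-zero forces $f$ to factor through $\ker\pi$, which has a shorter filtration.) This proves the first assertion with $p=p_l$.

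For the last assertion, apply the first to $\{\sigma_1,\dotsc,\sigma_t\}=\Pi_n$ to obtain $p$ and put $N:=n(p+1)$; the level map above then carries $\bigcup_{\sigma\in\Pi_n}B{}^k\tilde T^\sigma$ into $\bigcup_{\sigma\in\Pi_n}B{}^k\tilde T^{\sigma\times[p]}\subseteq\bigcup_{\tau\in\Pi_N}B{}^k\tilde T^\tau$ (as $\sigma\times[p]\in\Pi_{n(p+1)}$), so the canonical map $\tilde H_l(\bigcup_{\sigma\in\Pi_n}B{}^k\tilde T^\sigma)\to\tilde H_l(\bigcup_{\tau\in\Pi_N}B{}^k\tilde T^\tau)$ factors through $\tilde H_l(\bigcup_{\sigma\in\Pi_n}B{}^k\tilde T^{\sigma\times[p]})$ and is therefore pro-zero. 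The point I expect to be delicate is exactly the uniformity of the bookkeeping — that a single $p$ and a single index shift $s(m)$, independent of $k$ and $\mbf P$, control all intersection posets $\sigma_I$ and all degrees $\le l$ at once — together with the passage from pro-vanishing on the $E^1$-page to pro-vanishing on the abutment across the finite (but nontrivial) extension problem; the identification of the canonical map with the $T^\varphi$ of Lemma~\ref{lem:key} and the contractibility of the nerve are then routine.
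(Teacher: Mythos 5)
Your proof is correct, and it rests on exactly the same two inputs as the paper's argument: the fact that arbitrary intersections of the $B{}^k\tilde{T}^{\sigma_i}$ are again triangular spaces for the intersected orderings $\sigma_I=\bigcap_{i\in I}\sigma_i$, and Lemma \ref{lem:key} applied to those intersections (via the identification ${}^k\tilde{T}^{\sigma\times[p]}=T^{{}^k\tilde{\sigma}\bigstar[p]}$ for $k\ge 1$ and the factoring through $\sigma\bigstar[p]\hookrightarrow\sigma\times[p]$ for $k=0$, both of which you state and both of which the paper also uses). The organization, however, is genuinely different. The paper inducts on $t$, using only the two-piece Mayer--Vietoris sequence --- splitting off $B\tilde{T}^{\sigma_t}$, whose intersection with the union of the others is $\bigcup_{i<t}B\tilde{T}^{\sigma_i\cap\sigma_t}$ --- together with a lift across that sequence; the price is that the resulting $p$ grows with $t$, each inductive step replacing $q,q'$ by $(q+1)(q'+1)-1$. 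You instead run the full \v{C}ech spectral sequence of the $t$-fold cover in one pass, kill the bottom row by contractibility of the nerve, kill the rows $1\le b\le l$ by Lemma \ref{lem:key} with a single monotone choice of $p_l$, $s_l(m)$, and then transfer pro-vanishing from the $E^1$-page to the abutment by the finite-filtration argument. This buys a cleaner, $t$-independent bound $p=p_l$ and a single iterated index shift, uniform in $k$, $\mbf{P}$ and the $\sigma_i$; the filtration device you invoke at the end is the same one the paper itself uses inside the proof of Lemma \ref{lem:key}, so nothing new is needed there. The points you flag as delicate are all handled correctly.
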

\begin{proof}
Note that
\[
	{}^k\tilde{T}^{\sigma\times [p]}(A,P) 
		= \begin{cases} T^{\sigma\times[p]}(A) &\text{if }k=0 \\ T^{{}^k\tilde{\sigma}\bigstar [p]}(A,P) &\text{if }k\ge 1. \end{cases}
\]
Hence, the case $t=1$ is true by Lemma \ref{lem:key}.
Let $t>1$ and suppose that the corollary holds for $s<t$.

We abbreviate ${}^k\tilde{T}^{\sigma}(\mbf{A},\mbf{P})$ as $\tilde{T}^{\sigma}$.
Set $\sigma_{i,t}:=\sigma_i\cap\sigma_t$.
Then we have a commutative diagram
\[
\xymatrix@C-1pc{
	\tilde{H}_l\bigl(\bigcup_{i=1}^{t-1} B\tilde{T}^{\sigma_i}\bigr) \oplus \tilde{H}_l(B\tilde{T}^{\sigma_t}) \ar[d] \ar[r]
		& \tilde{H}_l\bigl(\bigcup_{i=1}^t B\tilde{T}^{\sigma_i}\bigr) \ar[r] \ar[d] \ar@{.>}[dl] & \tilde{H}_{l-1}\bigl(\bigcup_{i=1}^{t-1} B\tilde{T}^{\sigma_{i,t}}\bigr) \ar[d] \\
	\tilde{H}_l\bigl(\bigcup_{i=1}^{t-1} B\tilde{T}^{\sigma_i\times [q]}\bigr) \oplus \tilde{H}_l(B\tilde{T}^{\sigma_t\times [q]}) \ar[r]
		& \tilde{H}_l\bigl(\bigcup_{i=1}^t B\tilde{T}^{\sigma_i\times [q]}\bigr) \ar[r] & \tilde{H}_{l-1}\bigl(\bigcup_{i=1}^{t-1} B\tilde{T}^{\sigma_{i,t}\times [q]}\bigr)
}
\]
with exact rows.
By the induction hypothesis, the right vertical map is zero for some $q\ge 0$.
Thus, there exists a lift as indicated above.
Again, by the induction hypothesis, there exists $q'\ge 0$ such that the map
\[
	\tilde{H}_l\Bigl(\bigcup_{i=1}^{t-1} B\tilde{T}^{\sigma_i}\Bigr) \oplus \tilde{H}_l(B\tilde{T}^{\sigma_t}) 
		\to \tilde{H}_l\Bigl(\bigcup_{i=1}^{t-1} B\tilde{T}^{\sigma_i\times [q']}\Bigr) \oplus \tilde{H}_l(B\tilde{T}^{\sigma_t\times [q']})
\]
is zero.
It follows from $(\sigma_i\times [q])\times [q'] = \sigma_i\times [q'']$, $q'':=(q+1)(q'+1)-1$, that the map
\[
	\tilde{H}_l\Bigl(\bigcup_{i=1}^t B\tilde{T}^{\sigma_i}\Bigr) \to \tilde{H}_l\Bigl(\bigcup_{i=1}^t B\tilde{T}^{\sigma_i\times [q'']}\Bigr)
\]
is zero.
This completes the proof.
\end{proof}

\subsection{The pro acyclicity theorem}\label{proacyclic}

Recall that ${}^k\tilde{T}^\sigma(A)=T^\sigma(A)\ltimes M_{n,k}(A)$ for $\sigma\in\Pi_n$.

\begin{theorem}\label{thm:proacyclic}
Let $\mbf{A}$ be a commutative Tor-unital pro ring and $l\ge 0$.
Then:
\begin{enumerate}[(i)]
\item For $n\ge 2l+1$ and for any $k\ge 0$,
\[
	\tilde{H}_l\Bigl(\bigcup_{\sigma\in\Pi_n} B{}^k\tilde{T}^\sigma(\mbf{A})\Bigr) = 0,
\]
where the union is taken in $B(\GL_n(\mbf{A})\ltimes M_{n,k}(\mbf{A}))$.
\item For $n\ge 2l$ and for any $k\ge 0$, the canonical map
\[
	H_l\Bigl(\bigcup_{\sigma\in\Pi_n} BT^\sigma(\mbf{A})\Bigr) \to H_l\Bigl(\bigcup_{\sigma\in\Pi_n} B{}^k\tilde{T}^\sigma(\mbf{A})\Bigr)
\]
is a pro isomorphism.
\end{enumerate}
\end{theorem}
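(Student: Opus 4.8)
The theorem packages together the two inputs available to us: the vanishing of $\tilde{H}_l$ of the \emph{union} of triangular spaces once $n$ is large relative to $l$ (which is exactly the content extracted from Corollary \ref{cor:key}, via a limit argument over the stabilization size $N$), and a bookkeeping device that lets us shift from $T^\sigma$ to ${}^k\tilde{T}^\sigma$. Both parts should be proved simultaneously by induction on $l$, since the inductive step for (i) at level $l$ will invoke (ii) at level $l$, and (ii) at level $l$ will invoke (i) at smaller levels. The base case $l=0$ is trivial because all the spaces in sight are connected. So assume (i) and (ii) hold for all smaller homological degrees.

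\emph{Step 1: the key colimit trick.} The combinatorial point is that for a fixed partial ordering $\sigma$ on $\{1,\dots,n\}$, the suspension-type operation $\sigma\mapsto\sigma\times[p]$ (and its variant $\,\bigstar\,$) embeds $B{}^k\tilde{T}^\sigma(\mbf{A})$ into $B{}^k\tilde{T}^{\sigma\times[p]}(\mbf{A})$ inside a larger general linear group, and Corollary \ref{cor:key} tells us that the induced map on $\tilde{H}_l$ of the \emph{union over all $\sigma\in\Pi_n$} is the zero pro-morphism once the target is taken over $\Pi_N$ for $N$ large enough. Meanwhile, the union $\bigcup_{\sigma\in\Pi_n} B{}^k\tilde{T}^\sigma(\mbf{A})$ itself does not grow with $n$ in an essential way once $n\ge 2l+1$: one shows that the inclusion $\bigcup_{\sigma\in\Pi_n}\hookrightarrow\bigcup_{\sigma\in\Pi_{n+1}}$ induces an isomorphism on $H_l$ in this range, by a Mayer--Vietoris / nerve-of-a-covering argument comparing the poset $\Pi_n$ (ordered by refinement) with its image in $\Pi_{n+1}$. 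Combining these two facts: the identity on $\tilde{H}_l(\bigcup_{\sigma\in\Pi_n} B{}^k\tilde{T}^\sigma(\mbf{A}))$ factors through a map that Corollary \ref{cor:key} forces to be zero, hence $\tilde{H}_l$ itself is zero. This is part (i).

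\emph{Step 2: comparing $T^\sigma$ with ${}^k\tilde{T}^\sigma$ for part (ii).} Here I would filter the extra coordinates one at a time: ${}^{k}\tilde{T}^\sigma(\mbf{A}) = {}^{k-1}\tilde{T}^\sigma(\mbf{A})\ltimes M_{n+k-1,1}(\mbf{A})$, reducing (ii) to the case $k=1$ by induction on $k$. For $k=1$, the space $B{}^1\tilde{T}^\sigma(\mbf{A})$ sits in a fibration-type sequence over $BT^\sigma(\mbf{A})$ with fiber $BM_{n,1}(\mbf{A})$; running the associated spectral sequence for the whole union $\bigcup_{\sigma\in\Pi_n}$, the $E^2$-page involves the homology of $\bigcup_\sigma BT^\sigma$ with coefficients built from $H_*(M_{n,1}(\mbf{A}))$, which is an exterior/polynomial algebra on a module. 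The point is that the relevant positive-degree contributions are governed, after applying Lemma \ref{lem:semidirect} and Corollary \ref{cor:semidirect}, by $\tilde{H}$ of unions of triangular spaces \emph{with nontrivial pseudo-free module coefficients} in degrees $<l$ together with part (i) — and these vanish by the inductive hypotheses (for part (i) one uses the version of Corollary \ref{cor:key} with a pseudo-free $\mbf{P}$). Hence the edge map $H_l(\bigcup BT^\sigma)\to H_l(\bigcup B{}^1\tilde{T}^\sigma)$ is a pro isomorphism in the stated range $n\ge 2l$.

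\emph{Main obstacle.} The genuinely delicate point is the interplay of \emph{two} inductions with the pro-nature of the statements: ``zero as a pro morphism'' in Corollary \ref{cor:key} means only that some $\iota_{s(m),m}$ composite vanishes, and one must check that the index shifts $m\mapsto s(m)$ produced at each stage (over $k$, over $l$, and over the Mayer--Vietoris induction on $\#\{\sigma_i\}$) can be absorbed without circularity — i.e.\ that the stabilization size $N$ in Corollary \ref{cor:key} can be chosen uniformly in $\mbf{P}$ and $k$, which is exactly why that corollary was stated with the uniformity clause. The other subtlety is keeping the ambient groups straight: the various unions live in $B(\GL_n(\mbf{A})\ltimes M_{n,k}(\mbf{A}))$ for different $n$, and one must verify that the stabilization map $n\to N$ is compatible with the inclusions $\GL_n\hookrightarrow\GL_N$ so that the factorization in Step 1 literally makes sense. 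Once those compatibilities are nailed down, both parts follow by assembling Corollary \ref{cor:key}, Corollary \ref{cor:semidirect}, and the Mayer--Vietoris spectral sequence as above.
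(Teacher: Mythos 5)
Your overall strategy for part (i) is the paper's: show the stabilization maps $H_l\bigl(\bigcup_{\sigma\in\Pi_n}B{}^k\tilde{T}^\sigma(\mbf{A})\bigr)\to H_l\bigl(\bigcup_{\sigma\in\Pi_{n+1}}B{}^k\tilde{T}^\sigma(\mbf{A})\bigr)$ are pro isomorphisms for $n\ge 2l+1$, then note that Corollary \ref{cor:key} forces the composite up to some $\Pi_N$ to be zero, so the group vanishes. But the stabilization statement is itself the technical heart of the proof (Sublemma \ref{sublem:proacyclic}), and your justification of it is not the right one. The paper covers the union over $\Pi_{n+1}$ by the $n+1$ subspaces ${}^k\tilde{X}^i_{n+1}$ on which the $i$-th row and column of the triangular part vanish, identifies the $(p+1)$-fold intersections with ${}^k\tilde{X}_{n-p}$ (whence the bound $n\ge 2l+1$, via the induction hypothesis in degrees $<l$ applied to the covering spectral sequence), and then invokes Suslin's \cite[Corollary 6.6]{Su82} to pass from $\bigcup_i{}^k\tilde{X}^i_{n+1}$ to the full union ${}^k\tilde{X}_{n+1}$. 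Your proposed ``nerve of the covering comparing $\Pi_n$ ordered by refinement with its image in $\Pi_{n+1}$'' is a different covering that does not produce the identification of intersections with smaller unions, and it omits the Suslin input entirely; without these two ingredients the range $n\ge 2l+1$ does not come out.

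For part (ii) you take a genuinely different route, and it has a gap. Peeling off one column of $M_{n,k}(\mbf{A})$ at a time (note: the extra column is $M_{n,1}$, not $M_{n+k-1,1}$, since the added indices in ${}^k\tilde\sigma$ are mutually incomparable) and running the semidirect-product spectral sequence leaves you with coefficient contributions $H_q(M_{n,1}(\mbf{A}))$ for $q>0$, which do not vanish --- already $H_1(M_{n,1}(\mbf{A}))=M_{n,1}(\mbf{A})$, and over the discrete ordering $\sigma$ the group $T^\sigma$ is trivial, so nothing kills it piecewise. Your claim that these contributions are ``governed by $\tilde H$ of unions with pseudo-free module coefficients in degrees $<l$'' would require a version of part (i) for general pseudo-free coefficients $\mbf{P}$, which is not what the theorem asserts and is not available from the stated induction hypothesis; establishing it would mean redoing the entire induction in greater generality. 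The paper avoids all of this: it proves (ii) by comparing the two covering spectral sequences of Sublemma \ref{sublem:proacyclic} for $X_{2L}$ and ${}^k\tilde X_{2L}$, using part (i) at level $L$ to identify the quotients $H_L(X_{2L+1})\simeq H_L({}^k\tilde X_{2L+1})$ and the surjection $E^1_{2,L-1}=\bigoplus H_{L-1}(X_{2L-2})\twoheadrightarrow E^2_{2,L-1}$ together with part (ii) at level $L-1$ to handle the kernels. I would replace your Step 2 by this comparison argument.
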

\begin{proof}
We write ${}^k\tilde{X}_n(\mbf{A})=\bigcup_{\sigma\in\Pi_n} B{}^k\tilde{T}^\sigma(\mbf{A})$ and $X_n(\mbf{A})={}^0\tilde{X}_n(\mbf{A})$.

We prove the theorem by induction on $l$.
The case $l=0$ is trivial.
Let $L>0$ and suppose that the theorem holds for $l<L$.
\begin{sublemma}\label{sublem:proacyclic}
Let $k\ge 0$.
The canonical map
\[
	H_L({}^k\tilde{X}_n(\mbf{A})) \to H_L({}^k\tilde{X}_{n+1}(\mbf{A}))
\]
is a pro epimorphism for $n\ge 2L$ and a pro isomorphism for $n\ge 2L+1$.
\end{sublemma}
\begin{proof}
Let us introduce some notation.
Let $A$ be a ring, $\sigma\in\Pi_n$ and $1\le i\le n$.
We define $T^{\sigma,i}_n(A)$ be the subgroup of $T^\sigma_n(A)$ consisting of all $\alpha$ with $\alpha_{i,j} = \alpha_{j,i} = 0$ for $i\ne j$.
For $k\ge 0$, we set
\[
	{}^k\tilde{X}^i_n(A) := \bigcup_{\sigma\in\Pi_n}B T^{{}^k\tilde{\sigma},i}(A)
\]
and write ${}^k\tilde{X}^{i_1,\dotsc,i_p}_n(A)$ for the intersection of ${}^k\tilde{X}^{i_1}_n(A),\dotsc,{}^k\tilde{X}^{i_p}_n(A)$.
Then it is easy to see that ${}^k\tilde{X}^{i_1,\dotsc,i_p}_n(A)\simeq {}^k\tilde{X}_{n-p}(A)$.

Consider the spectral sequence
\begin{equation}\label{eq1:proacyclic}
	{}^k\tilde{E}^1_{p,q} = \bigsqcup_{i_0,\dotsc,i_p}H_q\bigl({}^k\tilde{X}^{i_0,\dotsc,i_p}_{n+1}(\mbf{A})\bigr)
		\Rightarrow H_{p+q}\Bigl(\bigcup_{1\le i\le n+1}{}^k\tilde{X}^i_{n+1}(\mbf{A})\Bigr).
\end{equation}
Since ${}^k\tilde{X}^{i_0,\dotsc,i_p}_{n+1}(\mbf{A}) \simeq {}^k\tilde{X}_{n-p}(\mbf{A})$, it follows form the induction hypothesis that 
\[
	{}^k\tilde{E}^2_{0,L} \simeq H_L({}^k\tilde{X}_n(\mbf{A}))
\]
for $n\ge 2L$.
Hence, the canonical map
\[
	H_L({}^k\tilde{X}_n(\mbf{A})) \to H_L\Bigl(\bigcup_{1\le i\le n+1}{}^k\tilde{X}^i_{n+1}(\mbf{A})\Bigr)
\]
is a pro epimorphism for $n\ge 2L$ and a pro isomorphism for $n\ge 2L+1$.
According to \cite[Corollary 6.6, see also the remark before Theorem 7.1]{Su82}\footnotemark, the canonical map
\[
	H_L\Bigl(\bigcup_{1\le i\le {n+1}}{}^k\tilde{X}^i_{n+1}(\mbf{A})\Bigr) \to H_L({}^k\tilde{X}_{n+1}(\mbf{A}))
\]
is a levelwise surjection for $n\ge 2L$ and a levelwise bijection for $n\ge 2L+1$.
\footnotetext{The proof works for non-unital rings as it is.}
Bringing these together, we obtain the sublemma.
\end{proof}

We show $(i)_{l=L}$.
Suppose that $n\ge 2L+1$.
According to Corollary \ref{cor:key}, the canonical map
\[
	H_L({}^k\tilde{X}_n(\mbf{A})) \to H_L({}^k\tilde{X}_N(\mbf{A}))
\]
is zero for some $N\ge n$.
On the other hand, by Sublemma \ref{sublem:proacyclic}, this map is a pro isomorphism,  and thus $H_L({}^k\tilde{X}_n(\mbf{A}))=0$.

To get $(ii)_{l=L}$, it remains to show that the canonical map
\[
	H_L(X_{2L}(\mbf{A})) \to H_L({}^k\tilde{X}_{2L}(\mbf{A}))
\]
is a pro isomorphism.
By the spectral sequences (\ref{eq1:proacyclic}), we have a commutative diagram
\[
\xymatrix{
	0 \ar[r] & E^2_{2,L-1} \ar[r] \ar[d] & H_L(X_{2L}(\mbf{A})) \ar[r] \ar[d] & H_L(X_{2L+1}(\mbf{A})) \ar[r] \ar[d]^\simeq & 0 \\
	0 \ar[r] & {}^k\tilde{E}^2_{2,L-1} \ar[r] & H_L({}^k\tilde{X}_{2L}(\mbf{A})) \ar[r] & H_L({}^k\tilde{X}_{2L+1}(\mbf{A})) \ar[r] & 0
}
\]
with exact rows.
Hence, it is enough to show that $E^2_{2,L-1} \to {}^k\tilde{E}^2_{2,L-1}$ is a pro isomorphism; equivalently it is a pro epimorphism.
This follows from the diagram
\[
\xymatrix{
	E^1_{2,L-1}=\bigoplus H_{L-1}(X_{2L-2}(\mbf{A})) \ar[r] \ar[d]^\simeq & E^2_{2,L-1} \ar[r] \ar[d] & 0 \\
	{}^k\tilde{E}^1_{2,L-1}=\bigoplus H_{L-1}({}^k\tilde{X}_{2L-2}(\mbf{A})) \ar[r] & {}^k\tilde{E}^2_{2,L-1} \ar[r] & 0
}
\]
with exact rows.
\end{proof}

\section{Homology pro stability}\label{MainThm}

In this section, we completes the proof of Theorem \ref{main}.
We follow Suslin \cite{Su96}, generalizing his argument to the pro setting.

\subsection{Volodin spaces}
\label{volodin}

Let $G$ be a group and $\{G_i\}_{i\in I}$ a family of subgroups of $G$.
We define the \textit{Volodin space} $V(G,\{G_i\}_{i\in I})$ to be the simplicial subset of $EG$ formed by simplices $(g_0,\dotsc,g_p)$
such that there exists $i\in I$ and $g_jg_k^{-1}\in G_i$ for all $0\le j,k\le p$.

The simplicial subset $V(G,\{G_i\}_{i\in I}) \subset EG$ is stable under the right action of $G$, and $V(G,\{G_i\})/G=\bigcup_{i\in I}BG_i$.
Hence, we have a spectral sequence
\begin{equation}\label{eq:volodin}
	E^2_{p,q} = H_p(G,H_q(V(G,\{G_i\}_{i\in I}))) \Rightarrow H_{p+q}\Bigl(\bigcup_{i\in I}BG_i\Bigr).
\end{equation}

Let $A$ be a ring.
We consider the Volodin space
\[
	V_n(A) := V(E_n(A),\{T^\sigma(A)\}_{\sigma\in\Pi_n}).
\]
The permutation group $\Sigma_n$ acts on $V_n(A)$ by conjugation, and $E_n(A)$ acts on $V_n(A)$ by the right multiplication.

Here are some properties of the Volodin spaces we need.
\begin{lemma}[Suslin-Wodzicki {\cite[Corollary 2.7]{SW92}}]
\label{lem:volodin}
Let $n\ge 1$ and $k\ge 0$.
The canonical projection and the inclusion
\[
	V_n(A) \rightleftarrows
		V\left(\begin{pmatrix} E_n(A) & * \\ 0 & 1_k \end{pmatrix},\left\{\begin{pmatrix} T^\sigma(A) & * \\ 0 & 1_k \end{pmatrix}\right\}_{\sigma\in\Pi_n}\right)
\]
are mutually inverse homotopy equivalences.
\end{lemma}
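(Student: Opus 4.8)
The plan is to reduce the statement to a general fact about Volodin spaces under a split extension of groups. Write $G=E_n(A)$, let $H=M_{n,k}(A)$ be the abelian group $\left(\begin{smallmatrix}1_n & * \\ 0 & 1_k\end{smallmatrix}\right)$, and set $G'=G\ltimes H=\left(\begin{smallmatrix}E_n(A) & * \\ 0 & 1_k\end{smallmatrix}\right)$; then the bottom Volodin space is $V(G,\{T^\sigma(A)\}_{\sigma\in\Pi_n})$ and the top one is $V(G',\{G'_\sigma\}_{\sigma\in\Pi_n})$ with $G'_\sigma=T^\sigma(A)\ltimes H=\left(\begin{smallmatrix}T^\sigma(A) & * \\ 0 & 1_k\end{smallmatrix}\right)$. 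Let $\pi\colon G'\to G$ be the projection and $s\colon G\to G'$, $\alpha\mapsto\left(\begin{smallmatrix}\alpha & 0\\ 0 & 1_k\end{smallmatrix}\right)$, its section. Since $\pi(G'_\sigma)=T^\sigma(A)$ and $s(T^\sigma(A))\subset G'_\sigma$, the simplicial maps $E\pi\colon EG'\to EG$ and $Es\colon EG\to EG'$ induced by functoriality of $E(-)$ restrict to simplicial maps $q\colon V(G',\{G'_\sigma\})\to V(G,\{T^\sigma(A)\})$ and $\iota\colon V(G,\{T^\sigma(A)\})\to V(G',\{G'_\sigma\})$ — these are precisely the projection and the inclusion in the statement — and $\pi s=\id_G$ gives $q\iota=\id$ on the nose. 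So the whole content is to construct a simplicial homotopy $\iota q\simeq\id$ on $V(G',\{G'_\sigma\})$.

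The key structural observation I would use is that $H$ is contained in \emph{every} $G'_\sigma$, since the identity matrix lies in $T^\sigma(A)$ for all $\sigma$. Writing each $g'\in G'$ uniquely as $g'=h\cdot s\pi(g')$ with $h\in H$, this gives: for $g',g''\in G'$, one has $g'(g'')^{-1}\in G'_\sigma$ if and only if $\pi(g')\pi(g'')^{-1}\in T^\sigma(A)$ (indeed $g'(g'')^{-1}=h_1\,s\bigl(\pi(g')\pi(g'')^{-1}\bigr)\,h_2^{-1}$ with $h_1,h_2^{-1}\in H\subset G'_\sigma$). Thus whether a tuple of elements of $G'$ spans a simplex of $V(G',\{G'_\sigma\})$ depends only on its image in $EG$. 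I would then define homotopy operators $\eta_i\colon V(G',\{G'_\sigma\})_p\to V(G',\{G'_\sigma\})_{p+1}$, $0\le i\le p$, by
\[
	\eta_i(g'_0,\dots,g'_p):=\bigl(g'_0,\dots,g'_i,\;s\pi(g'_i),\;s\pi(g'_{i+1}),\dots,s\pi(g'_p)\bigr),
\]
and check that $\eta_i(g'_0,\dots,g'_p)$ really lands in the Volodin subspace: if $g'_a(g'_b)^{-1}\in G'_\sigma$ for all $a,b$, then every pairwise quotient of the new $(p+2)$-tuple has the form $h\cdot s\bigl(\pi(g'_a)\pi(g'_b)^{-1}\bigr)$ with $h\in H\subset G'_\sigma$, hence lies in $G'_\sigma$.

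Finally I would verify the simplicial-homotopy identities for the $\eta_i$. The endpoints are $d_0\eta_0=\iota q$ and $d_{p+1}\eta_p=\id$; the identity $d_{j+1}\eta_{j+1}=d_{j+1}\eta_j$ holds because both sides equal $(g'_0,\dots,g'_j,s\pi(g'_{j+1}),\dots,s\pi(g'_p))$; and compatibility with the remaining faces and with all degeneracies is immediate from the defining formula. This yields $\iota q\simeq\id$, so $q$ and $\iota$ are mutually inverse homotopy equivalences. I expect the only point that needs genuine care to be the stability of the homotopy inside the Volodin subspace — which is exactly what the inclusions $H\subset G'_\sigma$ provide — everything else being formal bookkeeping. (Note that the argument uses nothing about $\Pi_n$: it works verbatim for any family of subgroups of $E_n(A)$, which is why the $\Sigma_n$-action plays no role here. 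Conceptually, $q$ is the simplicial map whose fibre over each simplex is the corresponding simplex-set of $EH$, and the homotopy above is just the contraction of $EH$ transported along $s$.)
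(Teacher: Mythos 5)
The paper does not prove this lemma --- it quotes it from Suslin--Wodzicki \cite[Corollary 2.7]{SW92} --- so there is no internal proof to compare against; your argument is correct, complete, and is essentially the standard one. The identity $q\iota=\id$ is immediate from $\pi s=\id$, your operators $\eta_i$ satisfy the simplicial homotopy identities with endpoints $d_0\eta_0=\iota q$ and $d_{p+1}\eta_p=\id$, and the only non-formal point --- that the $\eta_i$ preserve the Volodin subspace --- is exactly supplied by your observation that $H=M_{n,k}(A)$ is contained in every $\left(\begin{smallmatrix} T^\sigma(A) & * \\ 0 & 1_k \end{smallmatrix}\right)$, whence $g'(g'')^{-1}\in G'_\sigma$ depends only on $\pi(g')\pi(g'')^{-1}$.
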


\begin{lemma}[Suslin-Wodzick {\cite[Lemma 2.8]{SW92}}]
\label{lem2:volodin}
For every $n,l\ge 0$, the action of $E_{n+1}(A^2)$ on the image of the canonical map
\[
	H_l(V_n(A)) \to H_l(V_{n+1}(A))
\]
is trivial.
\end{lemma}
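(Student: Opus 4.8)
The plan is to reduce the statement to a one-generator computation and then carry out an explicit simplicial homotopy. Write $M\subseteq H_l(V_{n+1}(A))$ for the image of the canonical map, and recall that $E_{n+1}(A)$ acts on $V_{n+1}(A)$ on the right (this preserves $V_{n+1}(A)$, since right multiplication does not change the pairwise quotients $g_ag_b^{-1}$). Let $H^+:=\langle e_{i,n+1}(a):1\le i\le n,\ a\in A\rangle\cong M_{n,1}(A)$ and $H^-:=\langle e_{n+1,j}(b):1\le j\le n,\ b\in A\rangle\cong M_{1,n}(A)$ be the last-column and last-row subgroups of $E_{n+1}(A)$. I claim it suffices to show that each generator of $H^+$ and of $H^-$ fixes $M$ pointwise: indeed, if every generator and its inverse fixes $M$ pointwise, then so does every word in them, so $\langle H^+,H^-\rangle$ acts trivially on $M$; and $E_{n+1}(A^2)\subseteq\langle H^+,H^-\rangle$, because the generators $e_{k,n+1}(c),e_{n+1,l}(c)$ with $c\in A^2\subseteq A$ lie in $H^+$ resp.\ $H^-$, while for $k\ne l$ with $k,l\le n$ and $c=\sum_t a_tb_t\in A^2$ one has $e_{kl}(c)=\prod_t[e_{k,n+1}(a_t),e_{n+1,l}(b_t)]$ by the Steinberg relations.

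For the one-generator statement, fix $e:=e_{i,n+1}(a)$ and consider the two simplicial maps $V_n(A)\to V_{n+1}(A)$ given by the inclusion $\iota$ and by $v\mapsto \iota(v)\cdot e$; since simplicially homotopic maps agree on homology, and since $\iota$ induces the inclusion of $M$, this gives the claim for $e$. I would produce the homotopy by the standard prism construction: to a $p$-simplex $(g_0,\dots,g_p)$ of $V_n(A)$, with witnessing partial order $\sigma\in\Pi_n$ (so $g_ag_b^{-1}\in T^\sigma(A)$ for all $a,b$), associate the $(p+1)$-simplices
\[
	\rho_a:=(g_0,\dots,g_a,\ g_ae,g_{a+1}e,\dots,g_pe),\qquad 0\le a\le p .
\]
These are natural in the simplex and have the correct faces, so the only point to check is that each $\rho_a$ lies in $V_{n+1}(A)$. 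Take as witness the partial order $\tau:={}^1\tilde\sigma\in\Pi_{n+1}$ obtained by adjoining $n+1$ as a new maximum, so that $T^\tau(A)=T^\sigma(A)\ltimes M_{n,1}(A)$. Two vertices of $\rho_a$ in the same half have quotient $g_ag_b^{-1}\in T^\sigma(A)\subseteq T^\tau(A)$; a mixed quotient such as $g_b(g_ae)^{-1}=(g_bg_a^{-1})(g_ae^{-1}g_a^{-1})$ lies in $T^\tau(A)$ because $g_bg_a^{-1}\in T^\sigma(A)$ and $g_ae^{-1}g_a^{-1}\in M_{n,1}(A)$ — here one uses that $g_a\in E_n(A)$ normalizes the last-column subgroup $M_{n,1}(A)$, acting on it through $\GL_n$, and that $M_{n,1}(A)$ is the normal factor of $T^\tau(A)$. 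The case $e=e_{n+1,j}(b)$ is entirely symmetric, now adjoining $n+1$ as a new minimum so that $T^\tau(A)=M_{1,n}(A)\rtimes T^\sigma(A)$.

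The main obstacle is precisely this verification: exhibiting a single partial order $\tau\in\Pi_{n+1}$ witnessing all pairwise quotients of $\rho_a$ at once, and controlling the mixed quotients between the two halves. This is the whole reason for enlarging the ambient rank from $n$ to $n+1$: adjoining one extremal element turns the last-column (resp.\ last-row) elementary generators into elements of the Volodin family, and the normality of $M_{n,1}(A)$ (resp.\ $M_{1,n}(A)$) in $T^\tau(A)$ absorbs the conjugates $g_ae^{\pm1}g_a^{-1}$. Alternatively, one can route the argument through Lemma \ref{lem:volodin}, replacing $V_n(A)$ by the homotopy equivalent Volodin space of $E_n(A)\ltimes M_{n,1}(A)$ with respect to the family $\{T^\sigma(A)\ltimes M_{n,1}(A)\}_\sigma$, in which $M_{n,1}(A)$ sits as a normal subgroup of every member of the family, and run the same prism argument there; but the direct computation above avoids invoking a transposed form of that lemma for the row subgroup. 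Finally, the low-rank cases ($n\le 1$, where $V_n(A)$ is a point) and the case $l=0$ (where $M$ is the class of a connected component of $V_{n+1}(A)$) are immediate.
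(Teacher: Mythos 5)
The paper does not prove this lemma itself; it is quoted from Suslin--Wodzicki \cite[Lemma 2.8]{SW92}, and your argument is correct and essentially the standard one from that source. Both the reduction (the Steinberg identity $e_{kl}(ab)=[e_{k,n+1}(a),e_{n+1,l}(b)]$ expressing $E_{n+1}(A^2)$ in terms of the last-column and last-row generators) and the key step (the prism homotopy whose $(p+1)$-simplices are witnessed by the order obtained from $\sigma$ by adjoining $n+1$ as a new maximum, resp.\ minimum, so that $T^\sigma(A)\ltimes M_{n,1}(A)$, resp.\ $M_{1,n}(A)\rtimes T^\sigma(A)$, absorbs the conjugates $g_ae^{\pm1}g_a^{-1}$) check out.
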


\begin{corollary}\label{cor:volodin}
Let $\mbf{A}$ be a pro ring such that $\mbf{A}/\mbf{A}^2=0$.
Then, for every $n,l\ge 0$, the action of $E_{n+1}(\mbf{A})$ on the image of the canonical map
\[
	H_l(V_n(\mbf{A})) \to H_l(V_{n+1}(\mbf{A}))
\]
is pro trivial.
\end{corollary}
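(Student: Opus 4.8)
The plan is to deduce the corollary directly from Lemma \ref{lem2:volodin} by the standard ``raising to a higher power'' trick that the hypothesis $\mbf{A}/\mbf{A}^2=0$ makes available. First I would fix notation: let $\mbf{I}=\{I_m\}$ denote the image of the canonical level map $H_l(V_n(\mbf{A}))\to H_l(V_{n+1}(\mbf{A}))$, regarded as a pro abelian subgroup of $H_l(V_{n+1}(\mbf{A}))$. Since the transition maps $\iota_{s,m}$ are compatible with the inclusions $V_n(A_m)\hookrightarrow V_{n+1}(A_m)$, this image is computed levelwise, i.e.\ $I_m=\operatorname{im}\bigl(H_l(V_n(A_m))\to H_l(V_{n+1}(A_m))\bigr)$ with structure maps restricted from those of $H_l(V_{n+1}(\mbf{A}))$. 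The assertion to be proved is that the action morphism $E_{n+1}(\mbf{A})\times\mbf{I}\to H_l(V_{n+1}(\mbf{A}))$ and the composite $E_{n+1}(\mbf{A})\times\mbf{I}\to\mbf{I}\hookrightarrow H_l(V_{n+1}(\mbf{A}))$ of projection and inclusion agree as pro morphisms; equivalently, that for each index $m$ there is $s\ge m$ after which the two become equal upon applying $\iota_{s,m}$.

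Given $m$, I would use $\mbf{A}/\mbf{A}^2=0$ to choose $s\ge m$ with $\iota_{s,m}(A_s)\subset A_m^2$, hence $\iota_{s,m}\bigl(E_{n+1}(A_s)\bigr)\subset E_{n+1}(A_m^2)$. For $g\in E_{n+1}(A_s)$ and $x\in I_s$, the action of $E_{n+1}(\mbf{A})$ on $H_l(V_{n+1}(\mbf{A}))$ being a level map compatible with the structure maps yields $\iota_{s,m}(g\cdot x)=\iota_{s,m}(g)\cdot\iota_{s,m}(x)$. Since $\iota_{s,m}(g)\in E_{n+1}(A_m^2)$ and $\iota_{s,m}(x)\in I_m$, Lemma \ref{lem2:volodin} applied to the ring $A_m$ gives $\iota_{s,m}(g)\cdot\iota_{s,m}(x)=\iota_{s,m}(x)$. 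Therefore $\iota_{s,m}(g\cdot x)=\iota_{s,m}(x)$, which is exactly the required agreement at level $m$, so the action is pro trivial.

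There is essentially no serious obstacle here: the only point needing a little care is the bookkeeping that the pro object $\mbf{I}$ is levelwise and that the $E_{n+1}(\mbf{A})$-action is a genuine level map of pro modules, so that ``applying $\iota_{s,m}$'' commutes with acting and with passing to the image; both facts are immediate from the definitions in \S\ref{notation} and \S\ref{grouphomology}. The actual content is carried entirely by the already-cited non-pro statement, Lemma \ref{lem2:volodin} of Suslin--Wodzicki.
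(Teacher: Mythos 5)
Your proof is correct and is essentially identical to the paper's own argument: choose $s\ge m$ with $\iota_{s,m}(A_s)\subset A_m^2$ using $\mbf{A}/\mbf{A}^2=0$, then apply the Suslin--Wodzicki Lemma \ref{lem2:volodin} at level $m$ to conclude $\iota_{s,m}(gx)=\iota_{s,m}(x)$. The only difference is that you spell out the levelwise bookkeeping more explicitly than the paper does.
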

\begin{proof}
Write $\mbf{A}=\{A_m\}$.
By the assumption, there exists $s\ge m$ for each $m$ such that $\iota_{s,m}A_s\subset A_m^2$.
Hence, given $x$ in the image of $H_l(V_n(A_s)) \to H_l(V_{n+1}(A_s))$ and $g\in E_n(A_s)$, we have $\iota_{s,m}(gx)=\iota_{s,m}(x)$.
\end{proof}

\subsection{Van der Kallen's acyclicity}\label{vdKallen}

Let $A$ be a ring and $n\ge 1$.
Fix a unital ring $R$ which contains $A$ as a two sided ideal.
Let $I$ be a finite subset of $\{1,\dotsc,n\}$ and $R^n$ the free right $R$-module with basis $e_1,\dotsc,e_n$.
A map $f\colon I\to R^n$ is called an \textit{$A$-unimodular function} if $\{f(i)\}_{i\in I}$ forms a basis of a free direct summand of $R^n$ and $f(i)\equiv e_i$ modulo $A$.
We denote by $\msf{Uni}_{A,n}=\msf{Uni}_{A,n}(R)$ the set of all $A$-unimodular functions $f\colon I\to R^n$, which does not depend on $R$.

We define the associated semi-simplicial set as follows:
A $p$-simplex is an $A$-unimodular function $f\in \msf{Uni}_{A,n}$ with $|\dom f|=p+1$.
The $i$-th face $d_i\colon (\msf{Uni}_{A,n})_p \to (\msf{Uni}_{A,n})_{p-1}$, $0\le i\le p$, is defined by 
\[
	(f,\,\dom f=\{i_0,\dotsc,i_p\}) \mapsto f\vert_{\{i_0,\dotsc,\hat{i}_k,\dotsc,i_p\}}.
\]
As in the preceding section, for a semi-simplicial set $X$, we denote by $C_*(X)$ the complex freely generated by $X_*$ with the differential being the alternating sum of the faces.

The following result is proved by van der Kallen \cite{vdK80} in case $A$ is unital, and the proof can be easily modified for non-unital rings.
We can also find the complete proof in \cite[\S2]{Su96}.
\begin{theorem}\label{thm:vdKallen}
$\tilde{H}_l(C_*(\msf{Uni}_{A,n}))=0$ for $n\ge l+\sr(A)+1$.
\end{theorem}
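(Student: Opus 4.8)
The plan is to run van der Kallen's argument from \cite{vdK80}, reproduced with all details in \cite[\S2]{Su96}, and to check that unitality of $A$ is never used. The entire argument is formulated in terms of the ideal $A\subseteq R$ and of the stable range $\sr(A)$ of the possibly non-unital ring $A$ in the sense of \ref{notation}; in particular the stable range condition $(2.2)_n$ of \cite{Va69} is invoked for $A$, not for a unital ring. Fix once and for all a unital ring $R$ containing $A$ as a two-sided ideal, and work inside the free right $R$-modules $R^n$. It is convenient to prove the equivalent assertion that $\tilde{H}_l(C_*(\msf{Uni}_{A,n}))=0$ whenever $l\le n-\sr(A)-1$, by induction on $n$. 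For $n\le\sr(A)$ there is nothing to prove, and for $n=\sr(A)+1$ only $l=0$ occurs, so that one only needs $C_*(\msf{Uni}_{A,n})$ to be non-empty and connected: non-emptiness is witnessed by the $0$-simplex $i\mapsto e_i$, and connectedness follows from the stable range condition $(2.2)_n$ for $A$, which lets one join any two $A$-unimodular vectors over distinct coordinates by a bounded zig-zag of $1$-simplices.

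For the inductive step, fix $n>\sr(A)+1$ and $l$ with $1\le l\le n-\sr(A)-1$, and let $z$ be an $l$-cycle. The first ingredient is the identification of links. Given a $p$-simplex $f$, the submodule $F:=\langle f(i): i\in\dom f\rangle$ is a free direct summand, say $R^n=F\oplus P$ with $P$ free of rank $n-p-1$, and projection to $P$ identifies the link of $f$ in $C_*(\msf{Uni}_{A,n})$ with $C_*(\msf{Uni}_{A,n-p-1})$, where the $A$-framing on $P$ is the one induced by the basis vectors $e_i$ with $i\notin\dom f$, reduced modulo $F$, and where $\sr(A)$ is unchanged. By the induction hypothesis, the reduced homology of this link vanishes in all degrees $\le (n-p-1)-\sr(A)-1$.

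The second, and decisive, ingredient is the coning procedure. Since $z$ is a finite sum of simplices, each of rank at most $l+1\le n-\sr(A)$, the stable range condition $(2.2)_n$ for $A$ produces, after suitably splitting up $z$, $A$-unimodular vectors lying in the links of the relevant sub-collections of simplices; coning off the portion of $z$ joinable to such a vector $v$ pushes it, modulo a boundary, into the star of $v$, which is a cone and hence acyclic, while the remainder together with the correction terms supported in the links of the simplices of $z$ assembles into a lower-dimensional cycle, filled using the acyclicity established in the previous paragraph. Organized as a downward induction on the number of simplices of the chain being built, this shows that $z$ bounds. I expect this coning-and-bookkeeping step to be the main obstacle, as it is the technical heart of van der Kallen's proof; the only genuinely new points are that the stable-range input is applied to the non-unital ring $A$ and that the link identification above is compatible with the non-unital $A$-framing, both of which are routine once spelled out.
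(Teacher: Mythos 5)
Your proposal takes essentially the same route as the paper, which gives no argument of its own for this statement beyond citing van der Kallen \cite{vdK80} and the complete write-up in \cite[\S2]{Su96}, remarking only that the proof ``can be easily modified for non-unital rings.'' Your outline of that argument (induction on $n$ via the stable range condition $(2.2)_n$ for $A$, identification of links with $\msf{Uni}_{A,n-p-1}$, and the coning step) is consistent with the cited source, and the one step you defer --- the coning-and-bookkeeping --- is exactly the part the paper also defers to \cite{vdK80,Su96}.
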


Let $\msf{SUni}_{A,n}$ (resp.\ $\overline{\msf{SUni}}_{A,n}(R)$) be the set of all unimodular functions $f\in \msf{Uni}_{A,n}(R)$
for which there exists $\alpha\in E_n(A)$ (resp.\ $\alpha\in E_n(R,A)$) such that $f(i)=e_i \alpha$ for all $i\in\dom f$.
Then $\msf{SUni}_{A,n}$ and $\overline{\msf{SUni}}_{A,n}(R)$ are sub semi-simplicial sets of $\msf{Uni}_{A,n}(R)$.
\begin{corollary}
\label{cor:vdKallen}
\leavevmode
\begin{enumerate}[(i)]
\item $\tilde{H}_l(C_*(\overline{\msf{SUni}}_{A,n}(R)))=0$ for $n\ge l+\sr(A)+1$.
\item Let $\mbf{A}$ be a pro ring such that $\mbf{A}/\mbf{A}^2=0$.
Then 
\[
	\tilde{H}_l(C_*(\msf{SUni}_{\mbf{A},n})) = 0
\]
as pro abelian groups for $n\ge l+\sr(\mbf{A})+1$.
\end{enumerate}
\end{corollary}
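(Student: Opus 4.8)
The plan is to derive (i) from Theorem \ref{thm:vdKallen} by an acyclic-models / doubling argument, and then to derive (ii) from (i) via a pro-comparison between $\msf{SUni}$ and $\overline{\msf{SUni}}$ using the hypothesis $\mbf{A}/\mbf{A}^2=0$ and Corollary \ref{cor:Tits}. For (i), the key point is that the inclusion $\overline{\msf{SUni}}_{A,n}(R)\hookrightarrow\msf{Uni}_{A,n}(R)$ induces an isomorphism on reduced homology in the stable range; this is the standard fact that the elementary-orbit subcomplex carries all the homology of the unimodular complex, proved by Suslin in \cite{Su96} by the same inductive column-operation scheme used for $\msf{Uni}$ itself (one shows that any cycle in $\msf{Uni}$ is, up to boundary, supported on simplices $f$ with $f(i)=e_i\alpha$, $\alpha\in E_n(R,A)$, by repeatedly applying elementary transformations to clear the off-diagonal entries; this is exactly the content of the analysis in \cite[\S2]{Su96}). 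Combined with Theorem \ref{thm:vdKallen}, this gives $\tilde H_l(C_*(\overline{\msf{SUni}}_{A,n}(R)))=0$ for $n\ge l+\sr(A)+1$.

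For (ii), first note that $\msf{SUni}_{\mbf{A},n}$ and $\overline{\msf{SUni}}_{\mbf{A},n}(\mbf{R})$ are defined levelwise (for a fixed choice of unital pro ring $\mbf{R}=\{R_m\}$ containing $\mbf{A}$ as a two-sided ideal, e.g.\ $R_m=\mbb{Z}\ltimes A_m$), and that $\msf{SUni}_{A_m,n}\subset\overline{\msf{SUni}}_{A_m,n}(R_m)$, so we have a level map
\[
	C_*(\msf{SUni}_{\mbf{A},n}) \to C_*(\overline{\msf{SUni}}_{\mbf{A},n}(\mbf{R})).
\]
By (i) the target has vanishing reduced homology levelwise, hence as a pro abelian group; so it suffices to show this map is a pro isomorphism on reduced homology, and for that it is enough to produce, for each $m$, an index $s\ge m$ such that $\iota_{s,m}$ maps $\overline{\msf{SUni}}_{A_s,n}(R_s)$ into $\msf{SUni}_{A_m,n}(R_m)$. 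This is where $\mbf{A}/\mbf{A}^2=0$ enters: choosing $s$ with $\iota_{s,m}(A_s)\subset A_m^2$, Lemma \ref{lem:Tits} (equivalently Corollary \ref{cor:Tits}) gives $\iota_{s,m}(E_n(R_s,A_s))\subset E_n(R_m,A_m^2)\subset E_n(A_m)$ for $n\ge 3$, so a simplex $f$ with $f(i)=e_i\alpha$, $\alpha\in E_n(R_s,A_s)$, is carried to one with $\iota_{s,m}(f)(i)=e_i\,\iota_{s,m}(\alpha)$ and $\iota_{s,m}(\alpha)\in E_n(A_m)$, i.e.\ into $\msf{SUni}_{A_m,n}$. (For the low values $n\le 2$ outside the asserted range there is nothing to prove.) Thus the composite $C_*(\overline{\msf{SUni}}_{A_s,n})\to C_*(\msf{SUni}_{A_m,n})\to C_*(\overline{\msf{SUni}}_{A_m,n})$ equals $\iota_{s,m}$, which together with the levelwise vanishing from (i) forces $\{\tilde H_l(C_*(\msf{SUni}_{\mbf{A},n}))\}_m=0$ for $n\ge l+\sr(\mbf{A})+1$.

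The main obstacle is part (i): one must be careful that the column-clearing argument for $\msf{Uni}$ can be carried out using only elementary matrices in $E_n(R,A)$ (not arbitrary invertible ones), so that the comparison $\overline{\msf{SUni}}\hookrightarrow\msf{Uni}$ really is a homology isomorphism in the range and not merely a surjection on $H_l$. This is precisely the refinement recorded in \cite[\S2]{Su96}, and I would cite it rather than redo the induction; once (i) is in hand, (ii) is a short pro-theoretic argument. A minor point to check is that $\overline{\msf{SUni}}_{A_m,n}(R_m)$ depends on the choice of $R_m$ only up to the subgroup $E_n(R_m,A_m)$, so the level map $\mbf{A}\to\mbf{R}$ (rather than a canonical $R$) is all that is used, consistently with Notation \ref{notation}.
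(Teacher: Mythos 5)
Your proposal is correct and follows essentially the same route as the paper: part (i) is Suslin's \cite[Corollary 2.8]{Su96} (cited rather than reproved), and part (ii) is deduced from (i) by using $\mbf{A}/\mbf{A}^2=0$ together with Lemma \ref{lem:Tits}/Corollary \ref{cor:Tits} to see that $\msf{SUni}_{\mbf{A},n}\to\overline{\msf{SUni}}_{\mbf{A},n}(\mbf{R})$ is a pro isomorphism. The only quibble is your parenthetical that $n\le 2$ lies outside the asserted range: for $l=0$ and $\sr(\mbf{A})=1$ the case $n=2$ is in range while Lemma \ref{lem:Tits} needs $n\ge 3$, but this edge case is equally present in the paper's own use of Corollary \ref{cor:Tits}.
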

\begin{proof}
(i) See \cite[Corollary 2.8]{Su96}.

(ii) Let $\mbf{R}$ be a unital pro ring which contains $\mbf{A}$ as a two-sided ideal.
By Corollary \ref{cor:Tits}, the canonical map $\msf{SUni}_{\mbf{A},n} \to \overline{\msf{SUni}}_{\mbf{A},n}(\mbf{R})$ is a pro isomorphism.
Hence, (ii) follows from (i).
\end{proof}

\subsection{Homology pro stability for $V_n$ and $E_n$}

We say that a levelwise action of a pro group $\{G_m\}$ on a pro object $\{M_m\}$ is \textit{pro trivial}
if there exists $s\ge m$ for each $m$ such that $\iota_{s,m}(gx)= \iota_{s,m}(x)$ for all $g\in G_s$ and $x\in M_s$.

\begin{theorem}\label{thm:prostability}
Let $\mbf{A}$ be a commutative Tor-unital pro ring.
Let $r=\max(\sr(\mbf{A}),2)$ and $l\ge 0$. Then:
\begin{enumerate}[(i)]
\item The canonical map
\[
	H_l(V_n(\mbf{A})) \to H_l(V_{n+1}(\mbf{A}))
\]
is a pro epimorphism for $n\ge 2l+r+1$ and a pro isomorphism for $n\ge 2l+r+2$.
\item The conjugate action of $\Sigma_n$ on $H_l(V_n(\mbf{A}))$ is pro trivial for $n\ge 2l+r+2$.
\item The action of $E_n(\mbf{A})$ on $H_l(V_n(\mbf{A}))$ is pro trivial for $n\ge 2l+r+2$.
\item The canonical map
\[
	H_l(E_n(\mbf{A})) \to H_l\left(\begin{pmatrix} E_n(\mbf{A}) & * \\ 0 & 1_k \end{pmatrix}\right)
\]
is a pro isomorphism for $n\ge 2l+r-2$ and for any $k\ge 0$.
\item The conjugate action of $\Sigma_n$ on $H_l(E_n(\mbf{A}))$ is pro trivial for $n\ge 2l+r-1$.
\item The canonical map
\[
	H_l(E_n(\mbf{A})) \to H_l(E_{n+1}(\mbf{A}))
\]
is a pro epimorphism for $n\ge 2l+r-2$ and a pro isomorphism for $n\ge 2l+r-1$.
\end{enumerate}
\end{theorem}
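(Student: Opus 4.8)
The plan is to prove the six assertions (i)--(vi) simultaneously by induction on $l$, with an auxiliary induction on $n$ for each fixed $l$, following the scheme of Suslin \cite{Su96} but feeding in Theorem~\ref{thm:proacyclic} where \cite{Su96} uses Malcev theory, and feeding in Corollaries~\ref{cor:volodin} and~\ref{cor:vdKallen} as the pro refinements of the Suslin--Wodzicki and van der Kallen inputs. The case $l=0$ is immediate, and the degree-one $E_n$-statements follow from the pro stability of $K_1$ in \S\ref{K1}, using Theorem~\ref{thm2:stabilityK1} to replace $E_n(\mbf{A})$ by $\GL_n(\mbf{A})\cap E(\mbf{A})$ up to pro isomorphism (note that Tor-unitality forces $\mbf{A}/\mbf{A}^2=0$, so all results of \S\ref{K1} apply). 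So fix $l\ge 1$, assume (i)--(vi) in all degrees $<l$, and within degree $l$ argue by induction on $n$.

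\textbf{Step 1: pro acyclicity of $V_n(\mbf{A})$, and (i)--(iii).} First I would extract information on $H_l(V_n(\mbf{A}))$ from the Volodin spectral sequence \eqref{eq:volodin} for $G=E_n(\mbf{A})$ with the family $\{T^\sigma(\mbf{A})\}_{\sigma\in\Pi_n}$, whose abutment $\tilde H_{p+q}\bigl(\bigcup_\sigma BT^\sigma(\mbf{A})\bigr)$ is pro zero for $p+q\ge 1$ and $n\ge 2(p+q)+1$ by Theorem~\ref{thm:proacyclic}(i) with $k=0$. The inductive hypotheses (i), (iii) in degrees $<l$ make the coefficients $H_q(V_n(\mbf{A}))$ with $0<q<l$ pro isomorphic to their stable values with pro trivial $E_n(\mbf{A})$-action, so all differentials hitting the line $q=l$ originate in the already-known zone of coefficient degree $<l$; combined with the (geometrically proven) pro epimorphy of the stabilization map one reads off the pro vanishing of $\tilde H_l(V_n(\mbf{A}))$ in the range $n\ge 2l+1$, and, via Lemma~\ref{lem:volodin}, the same for the Volodin spaces with an extra $1_k$-block. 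Part (i) is this vanishing at $n$ and $n+1$; part (iii) comes from running the stable value through Corollary~\ref{cor:volodin}; and part (ii) is the ``center kills'' swindle, a transposition in $\Sigma_n$ becoming inner after passage to $V_{2n}(\mbf{A})$, so that its action on the stable value is pro trivial; the numerical bookkeeping produces the bounds $n\ge 2l+r+1$ and $n\ge 2l+r+2$.

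\textbf{Step 2: the $1_k$-block statement (iv).} Here I would compare the Volodin spectral sequences \eqref{eq:volodin} for the groups $E_n(\mbf{A})$ and $E_n(\mbf{A})\ltimes M_{n,k}(\mbf{A})$ acting on homotopy equivalent copies of $V_n(\mbf{A})$ (Lemma~\ref{lem:volodin}), with quotients $\bigcup_\sigma BT^\sigma(\mbf{A})$ and $\bigcup_\sigma B{}^k\tilde T^\sigma(\mbf{A})$. The map on abutments is a pro isomorphism for $n\ge 2l$ by Theorem~\ref{thm:proacyclic}(ii), the map on coefficients is the identity up to pro isomorphism, and those coefficients vanish in degrees $<l$ by Step 1; a comparison of the two spectral sequences then forces $H_l(E_n(\mbf{A}))\to H_l(E_n(\mbf{A})\ltimes M_{n,k}(\mbf{A}))$ to be a pro isomorphism for $n\ge 2l+r-2$.

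\textbf{Step 3: (v) and (vi).} Finally I would run the equivariant homology spectral sequence of the $E_n(\mbf{A})$-action on the pro complex $C_*(\msf{SUni}_{\mbf{A},n})$, which is pro acyclic in a range by Corollary~\ref{cor:vdKallen}(ii). Its $p$-simplices decompose into finitely many $E_n(\mbf{A})$-orbits with stabilizers pro isomorphic to conjugates of $E_{n-p-1}(\mbf{A})\ltimes M_{n-p-1,p+1}(\mbf{A})$, so $E^1_{p,q}$ is a finite sum of groups pro isomorphic, by Step 2 and the induction, to $H_q(E_{n-p-1}(\mbf{A}))$; chasing this against the stabilization maps, and using part~(ii) to match the various orbit contributions, yields the pro epimorphism for $n\ge 2l+r-2$ and the pro isomorphism for $n\ge 2l+r-1$, which is (vi), while the compatible comparison of the $\Sigma_n$- and $\Sigma_{n+1}$-actions yields (v). I expect the main obstacle to be Step 1: converting the pro acyclicity of the triangular union into the pro vanishing of $\tilde H_l(V_n(\mbf{A}))$ requires threading the two inductions together carefully, and all subsequent spectral-sequence comparisons must be kept inside the narrow ranges in which Theorem~\ref{thm:proacyclic} and Corollary~\ref{cor:vdKallen} apply --- the constants in the bounds $2l+r\pm c$ are precisely what this accounting forces --- and one must throughout work with vanishing of pro abelian groups rather than degeneration of spectral sequences.
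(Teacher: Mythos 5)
Your Step 1 contains a fatal error on which the rest of the proposal depends. You claim that the Volodin spectral sequence \eqref{eq:volodin}, together with the vanishing of the abutment $\tilde{H}_{p+q}\bigl(\bigcup_\sigma BT^\sigma(\mbf{A})\bigr)$ supplied by Theorem \ref{thm:proacyclic}, lets you ``read off the pro vanishing of $\tilde{H}_l(V_n(\mbf{A}))$''. This is false. Theorem \ref{thm:proacyclic} concerns the quotient $V_n(\mbf{A})/E_n(\mbf{A})=\bigcup_\sigma BT^\sigma(\mbf{A})$, not $V_n(\mbf{A})$ itself, and the spectral sequence $E^2_{p,q}=H_p(E_n(\mbf{A}),H_q(V_n(\mbf{A})))\Rightarrow H_{p+q}\bigl(\bigcup_\sigma BT^\sigma(\mbf{A})\bigr)$ has the non-vanishing bottom row $E^2_{p,0}=H_p(E_n(\mbf{A}))$; the vanishing of the abutment therefore forces cancellation \emph{against} the higher rows rather than their vanishing. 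The stable homology of the Volodin spaces carries the relative $K$-theory of $\mbf{A}$ (this is the point of the Suslin--Wodzicki theory) and is nonzero in general, which is exactly why statement (i) asserts only stability and why (ii) and (iii) are not vacuous. Even the weaker conclusion $H_0(E_n(\mbf{A}),H_l(V_n(\mbf{A})))=0$ would not yield $H_l(V_n(\mbf{A}))=0$ without already knowing the action is trivial, i.e.\ without (iii) --- so the argument is also circular. Since your Step 2 explicitly invokes ``those coefficients vanish in degrees $<l$ by Step 1'' and your Step 3 builds on Step 2, the whole proof collapses at this point.

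What the paper actually does for (iii) is a different covering argument: it covers $V_n(\mbf{A})=\bigcup_{i}V_n(\mbf{A})^i$ by the subspaces where the $i$-th coordinate is maximal, identifies the $E^1$-page of the resulting spectral sequence with $C_p(\msf{SUni}_{\mbf{A},n})\otimes H_q(V_{n-p-1}(\mbf{A}))$ up to pro isomorphism, kills $E^2_{p,q}$ in a range using Corollary \ref{cor:vdKallen}, and combines the resulting edge epimorphism $C_0(\msf{SUni}_{\mbf{A},n})\otimes H_{l}(V_{n-1}(\mbf{A}))\to H_{l}(V_n(\mbf{A}))$ with Corollary \ref{cor:volodin} to get pro triviality of the $E_n(\mbf{A})$-action. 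Statement (i) is then obtained separately, by comparing the spectral sequences \eqref{eq:volodin} for $n$ and $n+1$ via Lemma \ref{lem:spectral2}; crucially this comparison consumes (vi) in degrees up to $l+2$. That dependence also breaks your induction scheme: proving (i) and (ii) in degree $l$ requires (vi) in degree $l+2$, so ``all statements in degree $<l$ imply all statements in degree $l$'' does not close up, and the paper instead runs the four steps with the offset indices $(iii)_{L-1}$, $(iv)_{L+1}$, $(v,vi)_{L+1}$, $(i,ii)_{L-1}$. Your Steps 2 and 3 do use the correct ingredients (the comparison of Volodin spectral sequences for $E_n(\mbf{A})$ and $E_n(\mbf{A})\ltimes M_{n,k}(\mbf{A})$, and the hyperhomology spectral sequence of $E_{n+1}(\mbf{A})$ acting on $C_*(\msf{SUni}_{\mbf{A},n+1})$), but as written they rest on the false vanishing and on the uncloseable induction.
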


We prove Theorem \ref{thm:prostability} by induction on $l$.
The case $l=0$ is clear.
Also, $(iv,v,vi)_{l=1}$ holds for the obvious reasons:
$(v,vi)_{l=1}$ follows from that $H_1(E_n(\mbf{A}))=0$ for $n\ge 3$.
For $(vi)_{l=1}$, note that we have a levelwise exact sequence
\[
\xymatrix@1{
	M_{n,k}(A)_{E_n(\mbf{A})} \ar[r] & H_1(E_n(\mbf{A})\ltimes M_{n,k}(\mbf{A})) \ar[r] & H_1(E_n(\mbf{A})) \ar[r] & 0,
}
\]
and it is easy to see that $M_{n,k}(\mbf{A})_{E_n(\mbf{A})} = 0$ for $n\ge 2$.

Let $L>0$.
The proof is divided into the four steps.
\begin{description}
\item[Step 1:] $(i,ii,iii)_{l<L-1}\Rightarrow (iii)_{l=L-1}$.
\item[Step 2:] $(iii)_{l\le L-1},\, (iv)_{l<L+1} \Rightarrow (iv)_{l=L+1}$.
\item[Step 3:] $(iv)_{l\le L+1},\, (v,vi)_{l<L+1} \Rightarrow (v,vi)_{l=L+1}$.
\item[Step 4:] $(i,ii)_{l<L-1},\, (iii)_{l\le L-1},\, (vi)_{l\le L+1}\Rightarrow (i,ii)_{l=L-1}$.
\end{description}

\subsection{Step 1: Covering argument I}\label{step1}

Suppose that $(i,ii,iii)_{l<L-1}$ hold.
We show $(iii)_{l=L-1}$.
\footnote{In this step, we only need $\Tor_1^{\mbb{Z}\ltimes\mbf{A}}(\mbb{Z},\mbb{Z})=\mbf{A}/\mbf{A}^2=0$.}

\subsubsection{Covering spectral sequence}

Let $A$ be a ring.
For $I\subset \{1,\dotsc,n\}$, let $\Pi^I_n$ be the set of all partial ordering $\sigma$ of $\{1,\dotsc,n\}$ for which every $i\in I$ is maximal.
Set $V_n(A)^I := V_n(E_n(A),\{T^\sigma(A)\}_{\sigma\in\Pi^I_n})$.
Then $V_n(A) = \bigcup_{i=1}^n V_n(A)^i$, and there is a spectral sequence
\[
	E^1_{p,q}(A) = \bigsqcup_{|I|=p+1} H_q(V_n(A)^I) \Rightarrow H_{p+q}(V_n(A)).
\]

Let $\msf{SUni}_{A,n}^I$ be the subset of $\msf{SUni}_{A,n}$ consisting of those functions $f$ with $\dom f=I$.
We define a map $\phi\colon V_n(A)^I \to \msf{SUni}_{A,n}^I$ by $\phi(\alpha_0,\dotsc,\alpha_q)(i)=e_i\alpha_0$, $i\in I$.
Then $\phi$ is a morphism of simplicial sets regarding $\msf{SUni}_{A,n}^I$ as a constant simplicial set, and the inverse image of the unimodular function $f_0\colon i\mapsto e_i$ is $V(E_n(A)^I, \{T^\sigma(A)\}_{\sigma\in \Pi_n^I})$, where $E_n(A)^I$ is the subgroup of $E_n(A)$ generated by elementary matrices $\alpha$ such that $e_i \alpha=e_i$ for all $i\in I$.
For each $f\in \msf{SUni}_{A,n}^I$, choose $\Lambda(f)\in E_n(A)$ with $f(i)=e_i\Lambda(f)$, $i\in I$.
Since the map $\phi$ is $E_n(A)$-equivariant, $\Lambda(f)$ gives an isomorphism $\phi^{-1}(f_0) \simeq \phi^{-1}(f)$ and
\[
	\msf{SUni}_{A,n}^I \times V(E_n(A)^I, \{T^\sigma(A)\}_{\sigma\in \Pi_n^I}) \xrightarrow{\sim} V_n(A)^I, \quad (f,u)\mapsto u\Lambda(f).
\]
Also, the conjugation by the shuffle permutation $\sigma_I$, $\sigma_I\{n-p,\dotsc,n\}=I$, gives an isomorphism
\[
	V(E_n(A)^{n-p,\dotsc,n},\{T^\sigma(A)\}_{\sigma\in \Pi_n^{n-p,\dotsc,n}})\xrightarrow{\sim} V(E_n(A)^I, \{T^\sigma(A)\}_{\sigma\in \Pi_n^I}).
\]
Hence, we get an isomorphism
\[
	\Phi_\Lambda\colon C_p(\msf{SUni}_{A,n})\otimes H_q(V(E_n(A)^{n-p,\dotsc,n},\{T^\sigma(A)\}_{\sigma\in\Pi_n^{n-p,\dotsc,n}}))\xrightarrow{\sim}E^1_{p,q}(A).
\]
For another choice of $\Lambda'$, there exists $\{\gamma(f)\in E_n(A)^{n-p,\dotsc,n}\}_{f\in\msf{SUni}_{A,n}}$ such that $\Phi_{\Lambda'}(f,u)=\Phi_{\Lambda}(f,u\gamma(f))$.

Under the isomorphism $\Phi_\Lambda$, the differential $d^1\colon E_{p,q} \to E_{p-1,q}$ is given by, for $f\in \msf{SUni}_{A,n}^I$ and $u\in H_q(V(E_n(A)^{n-p,\dotsc,n},\{T^\sigma(A)\}_{\sigma\in\Pi_n^{n-p,\dotsc,n}})$,
\begin{equation}\label{eq1:step1}
	d^1 (f\otimes u) = \sum_{k=0}^p (-1)^k d_k f \otimes \tau_{I,k}(\delta u)\tau_{I,k}^{-1}\alpha_k.
\end{equation}
Here, $\alpha_k$ is a certain element in $E_n(A)^{n-p+1,\dotsc,n}$, $\tau_{I,k} := \sigma_{I\setminus\{i_k\}}^{-1}\sigma_I$, and $\delta$ is the map induced from the canonical embedding $E_n(A)^{n-p,\dotsc,n}\to E_n(A)^{n-p+1,\dotsc,n}$.

\subsubsection{Pro arguments}
We write $\mbf{A}=\{A_m\}_{m\in\Xi}$.

Set $\bar{E}_n(\mbf{A}):=\GL_n(\mbf{A})\cap E(\mbf{A})$.
Then the canonical maps
\[
\xymatrix@R-1pc{
	H_q(V(\bar{E}_{n-p-1}(\mbf{A}),\{T^\sigma(\mbf{A})\}_{\sigma\in\Pi_{n-p-1}})) \ar[d]^-\simeq \\
	H_q(V(\bar{E}_n(\mbf{A})^{n-p,\dotsc,n},\{T^\sigma(\mbf{A})\}_{\sigma\in\Pi_n^{n-p,\dotsc,n}})) \ar[d]^-\simeq \\
	H_q\left(V\left({\begin{pmatrix} \bar{E}_{n-p-1}(\mbf{A}) & * \\ 0 & 1_{p+1} \end{pmatrix}},
		\left\{{\begin{pmatrix} T^\sigma(\mbf{A}) & * \\ 0 & 1_{p+1} \end{pmatrix}}\right\}_{\sigma\in\Pi_{n-p-1}}\right)\right)
}
\]
are levelwise isomorphisms.
Indeed, the second map is an isomorphism by definition and the composite is an isomorphism by Lemma \ref{lem:volodin}.
Hence, by Theorem \ref{thm2:stabilityK1}, the canonical map
\[
	\lambda\colon H_q(V_{n-p-1}(\mbf{A})) \to H_q(V(E_n(\mbf{A})^{n-p,\dotsc,n}, \{T^\sigma(\mbf{A})\}_{\sigma\in\Pi_n^{n-p,\dotsc,n}}))
\]
is a pro isomorphism for $n-p-1\ge r+1$.

Suppose that $q<L-1$ and $n-p-1\ge 2q+r+2$.
Then, by $(iii)_{<L-1}$, the action of $E_{n-p-1}(\mbf{A})$ on $H_q(V_{n-p-1}(\mbf{A}))$ is pro trivial.
Hence, there exists $s(m)\ge m$ for each $m\in\Xi$ such that the composite $\Psi_m$
\[
\xymatrix{
	C_p(\msf{SUni}_{A_{s(m)},n})\otimes H_q(V_{n-p-1}(A_{s(m)})) \ar[d]_{\id\otimes\lambda} \ar@{.>}@/^3pc/[rdd]^-{\Psi_m} & \\
	C_p(\msf{SUni}_{A_{s(m)},n})\otimes H_q(V(E_n(A_{s(m)})^{n-p,\dotsc,n},\{T^\sigma(A_{s(m)})\}_{\sigma\in\Pi_n^{n-p,\dotsc,n}}))
		\ar[d]_{\Phi_\Lambda}^\simeq & \\
	E^1_{p,q}(A_{s(m)}) \ar[r]^-{\iota_{s(m),m}} & E^1_{p,q}(A_m)
}
\]
does not depend on the choice of $\Lambda$.
We may assume $s(m+1)>s(m)$ for every $m$, so that we obtain a morphism of pro abelian groups
\[
	\Psi\colon C_p(\msf{SUni}_{\mbf{A},n})\otimes H_q(V_{n-p-1}(\mbf{A})) \to E^1_{p,q}(\mbf{A}).
\]
Since $\lambda$ is a pro isomorphism and $\Phi_\Lambda$ is an isomorphism, we see that $\Psi$ is a pro isomorphism.

Now, by $(ii)_{<L-1}$, the action of $\Sigma_{n-p-1}$ on $H_q(V_{n-p-1}(\mbf{A}))$ is also pro trivial.
Hence, by modifying $s(m)\ge m$ if necessary, we see that the diagram
\[
\xymatrix@C+3pc{
	C_{p+1}(\msf{SUni}_{A_{s(m),n}})\otimes H_q(V_{n-p-2}(A_{s(m)})) \ar[r]^-{\iota_{s(m),m}\Phi_\Lambda(\id\otimes\lambda)} \ar[d]^{\sum(-1)^k d_k \otimes \delta}
		& E^1_{p+1,q}(A_m) \ar[d]^{d^1} \\
	C_p(\msf{SUni}_{A_{s(m),n}})\otimes H_q(V_{n-p-1}(A_{s(m)})) \ar[r]^-{\iota_{s(m),m}\Phi_{\Lambda'}(\id\otimes\lambda)} & E^1_{p,q}(A_m)
}
\]
commutes, cf.\ the formula (\ref{eq1:step1}).
The horizontal maps are the maps $\Psi_m$ unless $n-p-1 = 2q+r+2$; in the last case only the bottom horizontal map can be identified with $\Psi_m$.
Consequently, for $q<L-1$, we obtain a morphism of pro complexes
\begin{equation}\label{eq2:step1}
	\Psi \colon \sigma_{\le n-2q-r-3}(C_\bullet(\msf{SUni}_{\mbf{A},n})\otimes H_q(V_{n-1-\bullet}(\mbf{A}))) \to \sigma_{\le n-2q-r-3}E^1_{\bullet,q}(\mbf{A})\end{equation}
and it is a pro isomorphism.

\begin{claim}\label{claim:step1}
For $q<L-1$ and $0<p\le n-2q-r-3$,
\[
	E^2_{p,q}(\mbf{A}) = 0.
\]
\end{claim}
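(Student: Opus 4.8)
The plan is to identify $E^2_{p,q}(\mbf{A})$ in the indicated range with a homology group of the complex $C_\bullet(\msf{SUni}_{\mbf{A},n})$ tensored with a fixed pro abelian group, and then invoke van der Kallen's acyclicity (Corollary \ref{cor:vdKallen}(ii)) together with the freeness of $C_\bullet(\msf{SUni}_{\mbf{A},n})$. Concretely, by the pro isomorphism of complexes \eqref{eq2:step1}, for $q<L-1$ the truncated complex $\sigma_{\le n-2q-r-3}E^1_{\bullet,q}(\mbf{A})$ is pro isomorphic to $\sigma_{\le n-2q-r-3}(C_\bullet(\msf{SUni}_{\mbf{A},n})\otimes H_q(V_{n-1-\bullet}(\mbf{A})))$. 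However, the second tensor factor still depends on $\bullet$ through the subscript $n-1-\bullet$. The first step is therefore to remove this dependence: by $(i)_{l<L-1}$, the canonical maps $H_q(V_{n-p-1}(\mbf{A}))\to H_q(V_{n-p}(\mbf{A}))$ are pro isomorphisms as long as $n-p-1\ge 2q+r+2$, i.e.\ exactly on the range $p\le n-2q-r-3$ where the truncation lives. Hence on this range the complex is pro isomorphic to $C_\bullet(\msf{SUni}_{\mbf{A},n})\otimes H_q(V_{n-1}(\mbf{A}))$ with differential $\bigl(\sum(-1)^k d_k\bigr)\otimes\id$, i.e.\ the differential on $C_\bullet(\msf{SUni}_{\mbf{A},n})$ tensored with the identity.

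Next I would compute the homology of this complex. Since each $C_p(\msf{SUni}_{\mbf{A},n})$ is a levelwise free abelian group, the universal coefficient / Künneth argument shows that $H_p$ of $C_\bullet(\msf{SUni}_{\mbf{A},n})\otimes H_q(V_{n-1}(\mbf{A}))$ is computed by the reduced homology $\tilde H_p(C_\bullet(\msf{SUni}_{\mbf{A},n}))$ tensored with $H_q(V_{n-1}(\mbf{A}))$ in positive degrees $p>0$ (there is no $\Tor$ contribution because the $C_p$ are free). By Corollary \ref{cor:vdKallen}(ii), $\tilde H_p(C_\bullet(\msf{SUni}_{\mbf{A},n}))=0$ as a pro abelian group whenever $n\ge p+\sr(\mbf{A})+1$; since $p\le n-2q-r-3\le n-r-1\le n-\sr(\mbf{A})-1$ in our range (using $q\ge 0$ and $r\ge\sr(\mbf{A})$), this hypothesis is satisfied. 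Therefore $H_p$ of the truncated complex vanishes as a pro abelian group for $0<p\le n-2q-r-3$.

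Finally, one must be careful that passing through a pro isomorphism of complexes preserves the vanishing of homology: a pro isomorphism of pro complexes induces pro isomorphisms on homology pro groups, so $E^2_{p,q}(\mbf{A})=H_p(\sigma_{\le n-2q-r-3}E^1_{\bullet,q}(\mbf{A}))$ is pro isomorphic to $H_p$ of the other complex for $0<p\le n-2q-r-3$, which we have just shown to be zero. (At the very top degree $p=n-2q-r-3$ one only has a surjection onto the relevant cycles, but $E^2_{p,q}$ is a quotient of the homology of the truncated complex, so vanishing there is still inherited; alternatively one truncates one degree higher using that $(i)_{l<L-1}$ still applies with a small buffer.) The main obstacle is the bookkeeping in the first step: aligning the index shift $n-1-\bullet$ in the coefficient system with the precise range in which $(i)_{l<L-1}$ gives pro isomorphisms, and checking that the differential \eqref{eq1:step1} — which a priori involves the conjugations $\tau_{I,k}$ and the correction terms $\alpha_k\in E_n(\mbf{A})^{n-p+1,\dots,n}$ — really does reduce, after applying $(ii)_{l<L-1}$ and $(iii)_{l<L-1}$, to the bare simplicial differential on $C_\bullet(\msf{SUni}_{\mbf{A},n})$ tensored with the identity. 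This is exactly the content already packaged into \eqref{eq2:step1}, so the remaining work is the homological algebra of the two paragraphs above.
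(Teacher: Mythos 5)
Your argument works, and essentially coincides with the paper's, in the range $0<p<n-2q-r-3$: there the identification \eqref{eq2:step1} lets you compute $E^2_{p,q}(\mbf{A})$ from the complex $C_\bullet(\msf{SUni}_{\mbf{A},n})\otimes H_q(V_{n-1-\bullet}(\mbf{A}))$, and van der Kallen's acyclicity (Corollary \ref{cor:vdKallen}(ii)) together with the pro epimorphy/isomorphy from $(i)_{l<L-1}$ kills the homology. (The paper keeps the varying coefficients $H_q(V_{n-p-1}(\mbf{A}))$ and the differential $\sum(-1)^kd_k\otimes\delta$ instead of first passing to constant coefficients; that difference is cosmetic. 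One small slip: the absence of a $\Tor$-term in your K\"unneth step is not ``because the $C_p$ are free'' --- the relevant term is $\Tor(H_{p-1}(C_\bullet(\msf{SUni}_{\mbf{A},n})),-)$, which is pro zero because $\tilde H_{p-1}$ is pro zero by van der Kallen in this range, not because the chain groups are free.)

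The genuine gap is the top degree $p=n-2q-r-3$, which the Claim includes and which neither of your two suggested fixes handles. The brutal truncation $\sigma_{\le p}$ has $H_p(\sigma_{\le p}E^1_{\bullet,q})=\ker(d^1\colon E^1_{p,q}\to E^1_{p-1,q})$, i.e.\ the full group of cycles; this is \emph{not} pro zero (it contains all the boundaries from $E^1_{p+1,q}$, and under \eqref{eq2:step1} it corresponds to $Z_p(\msf{SUni}_{\mbf{A},n})\otimes H_q(V_{n-p-1}(\mbf{A}))$, which van der Kallen identifies with the boundaries but does not annihilate). So ``$E^2_{p,q}$ is a quotient of the homology of the truncated complex'' gives you nothing: you must show cycles are boundaries, which forces you to look at degree $p+1$. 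Your fallback of ``truncating one degree higher'' also fails: at $p+1=n-2q-r-2$ one has $n-(p+1)-1=2q+r+1$, where $(i)_{l<L-1}$ only gives a pro \emph{epimorphism} $H_q(V_{n-p-2}(\mbf{A}))\to H_q(V_{n-p-1}(\mbf{A}))$, and, more seriously, the pro triviality of the $E_{n-p-2}(\mbf{A})$-action from $(iii)_{l<L-1}$ (needed to make $\Psi$ independent of the choice of $\Lambda$ and hence well defined) requires $n-p-2\ge 2q+r+2$, which fails there. The paper closes this case by a separate diagram chase: the comparison map $F_{p+1,q}\to E^1_{p+1,q}$ still \emph{exists} and the square with $d^1$ still commutes even though it is not a pro isomorphism, so a cycle in $E^1_{p,q}$ can be lifted through the degree-$p$ pro isomorphism to a cycle of $F_{\bullet,q}$, written as $\partial z$ using the pro epimorphisms $C_{p+1}(\msf{SUni}_{\mbf{A},n})\twoheadrightarrow Z_p(\msf{SUni}_{\mbf{A},n})$ and $H_q(V_{n-p-2}(\mbf{A}))\twoheadrightarrow H_q(V_{n-p-1}(\mbf{A}))$, and then pushed back into the image of $d^1$. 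Some argument of this kind is needed to finish your proof.
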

\begin{proof}
Suppose that $q<L-1$ and $0<p\le n-2q-r-3$.
Put $F_{p,q}(\mbf{A}) := C_p(\msf{SUni}_{\mbf{A},n})\otimes H_q(V_{n-p-1}(\mbf{A}))$, which we regard as a complex in $p$ with the differential $\partial:=\sum(-1)^k d_k \otimes \delta$.
First, we show that $H_p(F_{\bullet,q}(\mbf{A})) = 0$.

By $(i)_{<L-1}$, the canonical map $H_q(V_{n-p-1}(\mbf{A})) \to H_q(V_{n-p}(\mbf{A}))$ is a pro isomorphism, and thus
\[
	\ker(F_{p,q}(\mbf{A}) \to F_{p-1,q}(\mbf{A})) \simeq Z_p(\msf{SUni}_{\mbf{A},n}) \otimes H_q(V_{n-p-1}(\mbf{A})),
\]
where $Z_p(\msf{SUni}_{\mbf{A},n}) :=\ker(C_p(\msf{SUni}_{\mbf{A},n})\to C_{p-1}(\msf{SUni}_{\mbf{A},n}))$.
According to Corollary \ref{cor:vdKallen}, the differential
\[
	C_{p+1}(\msf{SUni}_{\mbf{A},n}) \to Z_p(\msf{SUni}_{\mbf{A},n})
\]
is a pro epimorphism.
Also, by $(i)_{<L-1}$, the canonical map
\[
	H_q(V_{n-p-2}(\mbf{A})) \to H_q(V_{n-p-1}(\mbf{A}))
\]
is a pro epimorphism.
These imply that $\partial\colon F_{p+1}(\mbf{A})\to\ker(F_{p,q}(\mbf{A}) \to F_{p-1,q}(\mbf{A}))$ is a pro epimorphism, hence $H_p(F_{\bullet,q}(\mbf{A})) = 0$.

If $p<n-2L-r-3$, then $\Psi$ (\ref{eq2:step1}) induces a pro isomorphism
\[
	H_pF_{\bullet,q}(\mbf{A}) \simeq E^2_{p,q}(\mbf{A}).
\]
Hence, in this case, the vanishing of $E^2_{p,q}(\mbf{A})$ follows from the one of $H_p(F_{\bullet,q}(\mbf{A}))$.

Finally, let $p=n-2q-r-3$.
Then we have a commutative diagram
\[
\xymatrix@C+3pc{
	F_{p+1,q}(A_{s(m)})\ar[r]^-{\iota_{s(m),m}\Phi_\Lambda(\id\otimes\lambda)} \ar[d]^\partial & E^1_{p+1,q}(A_m)\ar[d]^{d^1} \\
	F_{p,q}(A_{s(m)}) \ar[r]^-{\Psi_m}							  							   & E^1_{p,q}(A_m).
}
\]
Let $m'\ge m$ and $x\in \ker(E^1_{p,q}(A_{m'})\to E^1_{p-1,q}(A_{m'}))$.
Since $\Psi$ is a pro isomorphism, if we have taken $m'$ large enough, $\iota_{m',m}x$ lifts to $y\in\ker(F_{p,q}(A_{s(m)})\to F_{p-1,q}(A_{s(m)}))$ along $\Psi_m$.
Further, since $H_{p,q}(F_{\bullet,q}(\mbf{A}))=0$, we may assume that $y=\partial z$ for some $z\in F_{p+1,q}(A_{s(m)})$.
Hence, $\iota_{m',m}x$ is in the image of the differential $d^1$.
This proves $E^2_{p,q}(\mbf{A})=0$.
\end{proof}

\subsubsection{Conclusion}
Suppose that $n\ge 2L+r$.
If $p+q=L-1$ and $p>0$, then $q<L-1$ and $0<p\le n-2q-r-3$.
Hence, by Claim \ref{claim:step1}, the $E^2_{p,q}$-terms with $p+q=L-1$ are zero unless $E^2_{0,L-1}$, and the edge map
\[
	E^1_{0,L-1}(\mbf{A}) \to H_{L-1}(V_n(\mbf{A}))
\]
is a pro epimorphism.

Now, the composite
\[
\xymatrix{
	C_0(\msf{SUni}_{A_m,n})\otimes H_{L-1}(V_{n-1}(A_m)) \ar[d]_{\id\otimes\lambda} \ar@{.>}@/^2pc/[rdd] & \\
	C_0(\msf{SUni}_{A_m,n})\otimes H_{L-1}(V(E_n(A_m)^{\{n\}},\{T^\sigma(A_m)\}_{\sigma\in\Pi_n^{\{n\}}})) \ar[d]_{\Phi_\Lambda} & \\
	E^1_{0,L-1}(A_m) \ar[r]^-{\text{edge}} & H_{L-1}(V_n(A_m))
}
\]
is given by $f\otimes u \mapsto \sigma_{\{i\}}(\delta u)\sigma_{\{i\}}^{-1}\Lambda(f)$, where $f$ is an $A_m$-unimodular function with $\dom f=\{i\}$ and $u\in H_{L-1}(V_{n-1}(A_m))$.
Since the action of $E_n(\mbf{A})$ on the image of $\delta\colon H_{L-1}(V_{n-1}(\mbf{A}))\to H_{L-1}(V_n(\mbf{A}))$ is pro trivial by Corollary \ref{cor:volodin}, the above composite yields a pro morphism
\begin{equation}\label{eq3:step1}
	C_0(\msf{SUni}_{\mbf{A},n})\otimes H_{L-1}(V_{n-1}(\mbf{A})) \to H_{L-1}(V_n(\mbf{A})), \quad f\otimes u \mapsto \sigma_{\{i\}}(\delta u)\sigma_{\{i\}}^{-1}.
\end{equation}
Furthermore, since the edge map is a pro epimorphism, $\Phi_\Lambda$ is an isomorphism and $(\id\otimes\lambda)$ is a pro isomorphism, we see that (\ref{eq3:step1}) is a pro epimorphism.

By Corollary \ref{cor:volodin} again, we conclude that the action of $E_n(\mbf{A})$ on $H_{L-1}(V_n(\mbf{A}))$ is pro trivial.
This proves $(iii)_{l=L-1}$.

\subsection{Step 2: $V$ to $E$}

Suppose that $(iii)_{l\le L-1}$ and $(vi)_{l<L+1}$ hold.
We show $(vi)_{l=L+1}$.

Suppose that $n\ge 2L+r$ and fix $k\ge 0$.
We set 
\[
	\tilde{E}_n(\mbf{A}) := \begin{pmatrix} E_n(\mbf{A}) & * \\ 0 & 1_k \end{pmatrix}, \quad
	\tilde{T}^\sigma(\mbf{A}) := \begin{pmatrix} T^\sigma(\mbf{A}) & * \\ 0 & 1_k \end{pmatrix}
\]
and $\tilde{V}_n(\mbf{A}) := V(\tilde{E}_n(\mbf{A}),\{\tilde{T}^\sigma(\mbf{A})\}_{\sigma\in\Pi_n})$.

By Lemma \ref{lem:volodin}, the canonical inclusion and projection $V_n(\mbf{A})\rightleftarrows\tilde{V}_n(\mbf{A})$ are mutually inverse homotopy equivalences.
It follows that the action of $\left(\begin{smallmatrix} 1_n & * \\ 0 & 1_k \end{smallmatrix}\right)$ on $H_*(\tilde{V}_n(\mbf{A}))$ is trivial.
By $(iii)_{\le L-1}$, the action of $E_n(\mbf{A})$ on $H_q(V_n(\mbf{A}))\simeq H_q(\tilde{V}_n(\mbf{A}))$ is pro trivial for $q\le L-1$.
Hence, the action of $\tilde{E}_n(\mbf{A})$ on $H_q(\tilde{V}_n(\mbf{A}))$ is pro trivial for $q\le L-1$.

Consider the spectral sequences (\ref{eq:volodin}) and the canonical map between them;
\[
\xymatrix@C-1pc@R-1pc{
	E^2_{p,q}(\mbf{A})= H_p(E_n(\mbf{A}),H_q(V_n(\mbf{A}))) \ar@{=>}[r] \ar[d] 					& H_{p+q}(\bigcup_{\sigma\in\Pi_n} BT^\sigma(\mbf{A})) \\
	\tilde{E}^2_{p,q}(\mbf{A})= H_p(\tilde{E}_n(\mbf{A}),H_q(\tilde{V}_n(\mbf{A}))) \ar@{=>}[r] & H_{p+q}(\bigcup_{\sigma\in\Pi_n} B\tilde{T}^\sigma(\mbf{A})).
}
\]
For $q\le L-1$, the $E^2$-terms fit into the extensions
\[
\xymatrix@C-1pc{
	0 \ar[r] & H_p(E_n(\mbf{A}))\otimes H_q(V_n(\mbf{A})) \ar[r] \ar[d]
		& E^2_{p,q}(\mbf{A}) \ar[r] \ar[d] & \Tor(H_{p-1}(E_n(\mbf{A})),H_q(V_n(\mbf{A}))) \ar[r] \ar[d] & 0 \\
	0 \ar[r] & H_p(\tilde{E}_n(\mbf{A}))\otimes H_q(\tilde{V}_n(\mbf{A})) \ar[r] 
		& \tilde{E}^2_{p,q}(\mbf{A}) \ar[r] & \Tor(H_{p-1}(\tilde{E}_n(\mbf{A})),H_q(\tilde{V}_n(\mbf{A}))) \ar[r] & 0.
}
\]
By $(iv)_{<L+1}$, the canonical map $H_p(E_n(\mbf{A}))\to \tilde{H}_p(E_n(\mbf{A}))$ is a pro isomorphism for $p\le L$.
Hence, the canonical map
\[
	E^2_{p,q}(\mbf{A}) \to \tilde{E}^2_{p,q}(\mbf{A})
\]
is a pro isomorphism for $p\le L$ and $q\le L-1$.
Also, $E^2_{0,q}(\mbf{A}) \simeq \tilde{E}^2_{0,q}(\mbf{A})$ for all $q\ge 0$, since $H_*(V_n(\mbf{A})) \simeq H_*(\tilde{V}_n(\mbf{A}))$.
Finally, by Theorem \ref{thm:proacyclic}, the canonical map $E^\infty_i(\mbf{A}) \to \tilde{E}^\infty_i(\mbf{A})$ is a pro isomorphism for $n\ge 2i$.

Bringing these together, we have:
\begin{enumerate}[(1)]
\item $E^2_{p,q}(\mbf{A}) \simeq \tilde{E}^2_{p,q}(\mbf{A})$ for $p+q=L-1$.
\item $E^2_{p,q}(\mbf{A}) \simeq \tilde{E}^2_{p,q}(\mbf{A})$ for $p+q=L$.
\item $E^2_{p,q}(\mbf{A}) \simeq \tilde{E}^2_{p,q}(\mbf{A})$ for $p+q=L+1$ and $p\ge 2$ and $q\ge 1$.
\item $E^\infty_L(\mbf{A}) \simeq \tilde{E}^\infty_L(\mbf{A})$ and $E^\infty_{L+1}(\mbf{A}) \simeq \tilde{E}^\infty_{L+1}(\mbf{A})$.
\end{enumerate}
Then, by Lemma \ref{lem:spectral1} below, we conclude that
\[
	E^2_{L+1,0}(\mbf{A}) \to \tilde{E}^2_{L+1,0}(\mbf{A})
\]
is a pro epimorphism, and thus a pro isomorphism.
This proves $(vi)_{l=L+1}$.

\begin{lemma}[{\cite[Remark A.5]{Su96}}]\label{lem:spectral1}
Let $\mcal{A}$ be an abelian category.
Let $f\colon E\to \tilde{E}$ be a morphism of first quadrant homological spectral sequence in $\mcal{A}$, and let $L\ge 0$.
Assume that $f$ induces:
\begin{enumerate}[(1)]
\item A monomorphism $E^2_{p,q} \hookrightarrow \tilde{E}^{2}_{p,q}$ for $p+q=L-1$.
\item An isomorphism $E^2_{p,q} \xrightarrow{\sim} \tilde{E}^2_{p,q}$ for $p+q=L$.
\item An epimorphism $E^2_{p,q} \twoheadrightarrow \tilde{E}^2_{p,q}$ for $p+q=L+1$, $q\ge 1$ and $p\ge 2$.
\item An isomorphism $E^\infty_L \xrightarrow{\sim} \tilde{E}^\infty_L$ and an epimorphism $E^\infty_{L+1} \twoheadrightarrow \tilde{E}^\infty_{L+1}$.
\end{enumerate}
Then $f$ induces an epimorphism
\[
	E^2_{L+1,0} \twoheadrightarrow \tilde{E}^2_{L+1,0}.
\]
\end{lemma}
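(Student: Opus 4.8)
\subsection*{Proof plan for Lemma \ref{lem:spectral1}}

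The plan is to exploit the fact that $E^2_{L+1,0}$ sits on the bottom row. Every differential $d^r$ whose target is $E^r_{L+1,0}$ issues from $E^r_{L+1+r,\,1-r}$, which vanishes for $r\ge 2$; hence $E^r_{L+1,0}$ is only ever cut down by the outgoing differentials, and $E^2_{L+1,0}$ carries the decreasing filtration
\[
	E^\infty_{L+1,0}=E^{L+2}_{L+1,0}\subseteq E^{L+1}_{L+1,0}\subseteq\dotsb\subseteq E^2_{L+1,0}
\]
with successive quotients $E^r_{L+1,0}/E^{r+1}_{L+1,0}\cong\mathrm{im}\bigl(d^r\colon E^r_{L+1,0}\to E^r_{L+1-r,r-1}\bigr)$ for $2\le r\le L+1$ and with bottom piece $E^\infty_{L+1,0}$, a quotient of the degree-$(L+1)$ abutment. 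Since $f$ commutes with all differentials it respects this filtration and the analogous one on $\tilde E^2_{L+1,0}$, so by the usual argument for morphisms of finitely filtered objects it suffices to prove that $f$ is an epimorphism on $E^\infty_{L+1,0}$ — immediate from hypothesis (4) — and on each graded piece $\mathrm{im}(d^r)\to\mathrm{im}(\tilde d^r)$, $2\le r\le L+1$. Granting that, $f$ epimorphic on $E^r_{L+1,0}$, and in particular on $E^2_{L+1,0}$, follows by descending induction on $r$ from the short exact sequences $0\to E^{r+1}_{L+1,0}\to E^r_{L+1,0}\to\mathrm{im}(d^r)\to 0$.

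It remains to handle the image terms $\mathrm{im}(d^r)$. The target $E^r_{L+1-r,r-1}$ has total degree $L$ and its outgoing $d^r$ lands in total degree $L-1$, so $\mathrm{im}(d^r)$ sits as the kernel of the surjection $\ker\bigl(d^r\colon E^r_{L+1-r,r-1}\to E^r_{L+1-2r,2r-2}\bigr)\twoheadrightarrow E^{r+1}_{L+1-r,r-1}$; comparing this with its tilded version, and using that $f$ is an isomorphism on the total-degree-$L$ term and a monomorphism on the total-degree-$(L-1)$ term, one reduces ``$f$ epimorphic on $\mathrm{im}(d^r)$'' to ``$f$ monomorphic on $E^{r+1}_{L+1-r,r-1}$''. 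The point now is that from page $r+1$ on, the position $(L+1-r,r-1)$ receives no further differentials (the incoming source would have negative second index), so $E^{r+1}_{L+1-r,r-1}$ itself carries a decreasing filtration whose graded pieces are images of outgoing $d^t$, $t\ge r+1$, lying in total degree $L-1$, and whose bottom piece is $E^\infty_{L+1-r,r-1}$, lying in total degree $L$; hence $f$ is monomorphic on it once it is monomorphic on those total-degree-$(L-1)$ terms and on that $E^\infty$-term. All of these inputs — $f$ an isomorphism on total degree $L$, a monomorphism on total degree $L-1$ page by page, an epimorphism on the relevant $E^\infty$-terms — are furnished by hypotheses (1)--(4), the first two propagated up the pages from the $E^2$-level by the standard kernel/cokernel bookkeeping, in which hypothesis (3), controlling the total-degree-$(L+1)$ entries away from the bottom row and the first two columns, supplies exactly the epimorphisms needed to compare images of the incoming differentials.

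The subtle part is to check that this is genuinely non-circular: comparing entries of total degree $L$ on page $t+1$ calls for comparing entries of total degree $L+1$ on page $t$, and at one position — namely $(L+1-t,t-1)$ — the incoming differential on page $t$ emanates from the bottom-row term $E^t_{L+1,0}$, which is precisely the object we are trying to understand. The resolution is that this exceptional dependence occurs for the position $(L+1-t,t-1)$ only at the passage from page $t$ to page $t+1$, whereas in the argument above one only ever needs the comparison at $(L+1-t,t-1)$ on page $t$ itself (for the kernel $\ker(d^t\colon E^t_{L+1-t,t-1}\to E^t_{L+1-2t,2t-2})$), and on all earlier pages that position is fed only by total-degree-$(L+1)$ entries off the axes — because $t\le L+1$ keeps the first index at least $2$ — so it is reached without ever invoking the bottom-row term; similarly the outgoing differentials from those off-axis entries never land on an exceptional position. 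Pinning down this bookkeeping — tracking exactly which comparison statements are needed on which pages, and verifying that each is available before it is used — is where the real work lies; the underlying mechanism is the standard spectral-sequence comparison argument in the style of Zeeman's theorem, localised to the range of total degrees $L-1,L,L+1$ dictated by the hypotheses.
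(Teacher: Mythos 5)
The paper does not prove this lemma at all: it is imported verbatim from Suslin's appendix (\cite[Remark A.5]{Su96}), so there is no in-paper argument to compare yours against. Judged on its own, your plan is the standard Zeeman-type comparison argument and its skeleton is sound: the filtration of $E^2_{L+1,0}$ by the subobjects $E^r_{L+1,0}=\ker(d^{r-1})$ with graded pieces $\mathrm{im}(d^r)$ and bottom piece $E^\infty_{L+1,0}$; the snake-lemma reduction of ``$f$ epi on $\mathrm{im}(d^r)$'' to ``$f$ epi on $\ker\bigl(d^r\colon E^r_{L+1-r,r-1}\to E^r_{L+1-2r,2r-2}\bigr)$ and mono on $E^{r+1}_{L+1-r,r-1}$''; and the second filtration of $E^{r+1}_{L+1-r,r-1}$ (no incoming differentials from page $r+1$ on) reducing the latter to monomorphisms in total degree $L-1$ plus hypothesis (4). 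I checked the deferred bookkeeping in your last paragraph and it does close up: the position $(L+1-t,t-1)$ only loses its degree-$L$ comparison at the passage from page $t$ to page $t+1$, and every place your argument consumes it lies on page $\le t$.

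That said, the ``subtle part'' can be made to evaporate, and with it the use of hypothesis (3). In the kernel comparison $\ker(d^r)\to\ker(\tilde d^r)$ on $E^r_{L+1-r,r-1}$ you only use the \emph{epimorphism} half of the degree-$L$ statement (lift $\tilde z$ to $y$, then kill $d^r y$ using the monomorphism on the degree-$(L-1)$ target); the monomorphism half in degree $L$ is never needed on any finite page, only on $E^\infty_{L+1-r,r-1}$, where hypothesis (4) supplies it. Now observe that the two assertions ``$f$ is an epimorphism on every $E^t_{p,q}$ with $p+q=L$'' and ``$f$ is a monomorphism on every $E^t_{p,q}$ with $p+q=L-1$'' propagate together from page $2$ to all pages using only hypotheses (1) and (2): surjectivity of $E^{t+1}_{p,q}\to\tilde E^{t+1}_{p,q}$ is insensitive to the incoming differential (one only needs $\ker d^t\to\ker\tilde d^t$ onto, i.e.\ epi at $(p,q)$ and mono at the degree-$(L-1)$ target), while injectivity in degree $L-1$ needs an epimorphism at the incoming source, which sits in degree $L$. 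So no total-degree-$(L+1)$ input is ever required on any page --- in particular the troublesome source $(L+1,0)$ never enters, there is no circularity to resolve, and hypothesis (3) is not used. This both shortens the proof and removes the one genuinely delicate point in your write-up.
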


\subsection{Step 3: Covering argument II}

Suppose that $(iv)_{l\le L+1}$ and $(v,vi)_{l<L+1}$ hold.
We show $(v,vi)_{l=L+1}$.

\begin{sublemma}\label{sublem:step3}
For $l\le L+1$ and $n\ge 2l+r-2$, the conjugate action of $\GL_{n+1}(\mbb{Z})$ on the image of
\[
	H_l(E_n(\mbf{A})) \to H_l(E_{n+1}(\mbf{A}))
\]
is pro trivial.
\end{sublemma}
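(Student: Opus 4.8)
The plan is to reduce the assertion to the conjugation action of a convenient finite generating set of $\GL_{n+1}(\mbb{Z})$ on $I:=\mrm{im}(H_l(E_n(\mbf{A}))\to H_l(E_{n+1}(\mbf{A})))$, where throughout $E_n(\mbf{A})$ denotes the upper-left block on coordinates $\{1,\dots,n\}$. Put $G:=\left(\begin{smallmatrix} E_n(\mbf{A}) & * \\ 0 & 1\end{smallmatrix}\right)\subseteq E_{n+1}(\mbf{A})$ and let $p\colon G\twoheadrightarrow E_n(\mbf{A})$ be the projection killing the last column, with standard section $s_0\colon E_n(\mbf{A})\hookrightarrow G$. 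Since $n\ge 2l+r-2$, $(iv)_l$ says $s_{0*}$ is a pro isomorphism on $H_l$; because $p\circ s_0=\id$, the retraction $p_*$ is then a pro isomorphism as well, inverse to $s_{0*}$. This is the one input beyond elementary group theory, and it is where the hypothesis is hidden (via $(iv)_l$).

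First I would record that $\GL_{n+1}(\mbb{Z})$ is generated by the transvections $e_{i,n+1}(1)$ and $e_{n+1,j}(1)$ with $1\le i,j\le n$ together with $d:=\diag(-1,1,\dots,1)$: the identity $[e_{i,n+1}(1),e_{n+1,j}(1)]=e_{ij}(1)$ produces all of $E_{n+1}(\mbb{Z})=\SL_{n+1}(\mbb{Z})$, and $d$ supplies the remaining coset. A direct computation shows conjugation by $\sigma=e_{i,n+1}(1)$ carries the block $E_n(\mbf{A})$ into $G$ via the homomorphism $c\colon g\mapsto\left(\begin{smallmatrix} g & (1-g)e_i \\ 0 & 1\end{smallmatrix}\right)$, which is again a section of $p$. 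Hence $p_*c_*=\id=p_*s_{0*}$, and since $p_*$ is a pro isomorphism we get $c_*=s_{0*}$ as pro morphisms; pushing forward along $G\hookrightarrow E_{n+1}(\mbf{A})$ shows $c_\sigma$ fixes $I$ pointwise (pro). Applying the transpose-inverse automorphism $M\mapsto(M^{\mrm{T}})^{-1}$, which preserves the groups $E_\bullet(\mbf{A})$ and sends $I$ to itself, disposes of the transvections $e_{n+1,j}(1)$ the same way. In particular $I$ is stable under $\GL_{n+1}(\mbb{Z})$: it is stable under those generators since they fix it pointwise, and under $d$ because $d$ normalizes the block.

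For $d$ I would write $d=\pi d'\pi^{-1}$ with $\pi=(1,n+1)$ and $d'=\diag(1,\dots,1,-1)$. Since $d'$ commutes with the block $E_n(\mbf{A})$, its conjugation is trivial on $I$; as $I$ is $\GL_{n+1}(\mbb{Z})$-stable, $c_\pi$ permutes $I$, so $c_d=c_\pi c_{d'}c_\pi^{-1}$ fixes $I$ pointwise. Since $\{\sigma : c_\sigma|_I=\id\}$ is a subgroup containing all the generators, it equals $\GL_{n+1}(\mbb{Z})$, which is the claim. For fixed $n$ the group $\GL_{n+1}(\mbb{Z})$ is finitely generated, so a single common index shift handles the pro-triviality.

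The point requiring care — and the main obstacle — is that the matrix conjugating the block into $G$ is elementary over $\mbb{Z}$, not over $\mbf{A}$: the two sections $c$ and $s_0$ of $p$ are not conjugate inside $G$, and the derivation $g\mapsto(1-g)e_i$ measuring their difference looks inner but is not, since $e_i\notin\mbf{A}^n$. Rather than correcting this by hand, the argument sidesteps it with the formal retraction observation, which uses only $(iv)_l$ — itself where $\mbf{A}/\mbf{A}^2=0$ (equivalently $\Tor_1^{\mbb{Z}\ltimes\mbf{A}}(\mbb{Z},\mbb{Z})=0$) is used. I would note that this step uses neither $(v)$ nor $(vi)$, consistent with its role as the opening move of the covering argument for $(v,vi)_{l=L+1}$.
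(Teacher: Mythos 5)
Your proposal is correct and follows essentially the same route as the paper: reduce to the generators $e_{i,n+1}(1)$, $e_{n+1,j}(1)$ and a determinant $-1$ diagonal matrix, observe that conjugation by an upper transvection turns the standard inclusion $E_n(\mbf{A})\hookrightarrow\bigl(\begin{smallmatrix} E_n(\mbf{A}) & * \\ 0 & 1\end{smallmatrix}\bigr)$ into another section of the projection, and use $(iv)_l$ to identify the two sections on $H_l$. Your minor deviations --- working with $E_n(\mbf{A})$ directly instead of passing to $E_n(\mbf{R},\mbf{A})$ via Corollary \ref{cor:Tits}, handling the lower transvections by transpose-inverse rather than by the transposed block, and taking $\diag(-1,1,\dotsc,1)$ instead of $\diag(1,\dotsc,1,-1)$ --- are cosmetic, and your closing remark about uniformity of the index shift is at the same level of rigor as the paper's own treatment.
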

\begin{proof}
The case $l=0,1$ is clear.
Suppose that $2\le l\le L+1$ and $n\ge 2l+r-2$.

Since $\GL_{n+1}(\mbb{Z})=\mbb{Z}\times\SL_{n+1}(\mbb{Z})$ and $\SL_{n+1}(\mbb{Z}) =E_{n+1}(\mbb{Z})$, $\GL_{n+1}(\mbb{Z})$ is generated by $e_{i,n+1}(1)$, $e_{n+1,i}(1)$, $1\le i\le n$, and $\diag(1,\dotsc,1,-1)$.
It is obvious that $\diag(1,\dotsc,1,-1)$ acts trivially on the image of $H_l(E_n(\mbf{A})) \to H_l(E_{n+1}(\mbf{A}))$.

We show the triviality of the conjugate action of $e_{i,n+1}(1)$; the one of $e_{n+1,i}(1)$ is similar.
By Corollary \ref{cor:Tits}, it suffices to show that the action on the image of 
\[
	H_l(E_n(\mbf{R},\mbf{A})) \to H_l(E_{n+1}(\mbf{R},\mbf{A}))
\]
is pro trivial for some unital pro ring $\mbf{R}$ which contains $\mbf{A}$ as a two-sided ideal.
The inclusion $E_n(\mbf{R},\mbf{A})\hookrightarrow E_{n+1}(\mbf{R},\mbf{A})$ factors through  
\[
	\tilde{E}_n(\mbf{R},\mbf{A}):= \begin{pmatrix} E_n(\mbf{R},\mbf{A}) & * \\ 0 & 1 \end{pmatrix} \subset E_{n+1}(\mbf{R},\mbf{A})
\]
and it is normalized by $e_{i,n+1}(1)$.
Hence, it suffices to show that $e_{i,n+1}(1)$ acts pro trivially on the image of $H_l(E_n(\mbf{R},\mbf{A})) \to H_l(\tilde{E}_n(\mbf{R},\mbf{A}))$.
Now, we have a commutative diagram
\[
\xymatrix@C+1pc{
	H_l(\tilde{E}_n(\mbf{R},\mbf{A})) \ar[r]^{e_{i,n+1}(1)} & H_l(\tilde{E}_n(\mbf{R},\mbf{A})) \ar@{->>}[d] \\
	H_l(E_n(\mbf{R},\mbf{A})) \ar@{^{(}->}[u] \ar[r]^\id 	& H_l(E_n(\mbf{R},\mbf{A}),
}
\]
and the vertical maps, the canonical inclusion and projection, are pro isomorphisms by $(iv)_{\le L+1}$.
This implies that $e_{i,n+1}(1)$ acts pro trivially on the image of $H_l(E_n(\mbf{R},\mbf{A})) \to H_l(\tilde{E}_n(\mbf{R},\mbf{A}))$.
\end{proof}

We consider the hyperhomology spectral sequence
\[
	E^1_{p,q}(\mbf{A}) = H_q(E_{n+1}(A),C_p(\msf{SUni}_{\mbf{A},n+1})) \Rightarrow H_{p+q}(E_{n+1}(\mbf{A}),C_\bullet(\msf{SUni}_{\mbf{A},n+1})).
\]
The $C_p(\msf{SUni}_{\mbf{A},n+1})$ decomposes into a direct sum of $E_{n+1}(A)$-submodules $C_p(\msf{SUni}_{\mbf{A},n+1}^I)$ with $|I|=p+1$, and we have a levelwise isomorphism $\mbb{Z}E_{n+1}(\mbf{A})\otimes_{\mbb{Z}E_{n+1}(\mbf{A})^I}\mbb{Z} \xrightarrow{\sim} C_p(\msf{SUni}_{\mbf{A},n+1})^I$, which sends $\alpha\in E_{n+1}(\mbf{A})$ to the unimodular function $i\mapsto e_i\alpha$, $i\in I$.
Hence,
\[
	\bigsqcup_{|I|=p+1} H_q(E_{n+1}(\mbf{A})^I) \simeq E^1_{p,q}(\mbf{A}).
\]

Let $\Delta^n$ be the nerve of the partially ordered set $\{1<2<\dotsb<n+1\}$.
We define level maps $E_{n-p}(\mbf{A})\to E_{n+1}(\mbf{A})^I$ by sending $\alpha$ to $\sigma_I\left(\begin{smallmatrix}\alpha & 0 \\ 0 & 1_{p+1}\end{smallmatrix}\right)\sigma_I^{-1}$, where $\sigma_I$ is the shuffle permutation $\sigma_I\{n-p+1,\dotsc,n+1\}=I$.
These maps yield
\[
	\Psi\colon \Delta^n_p\otimes H_q(E_{n-p}(\mbf{A}))\simeq \bigsqcup_{|I|=p+1}H_q(E_{n-p}(\mbf{A}))
		\to \bigsqcup_{|I|=p+1} H_q(E_{n+1}(\mbf{A})^I) \simeq E^1_{p,q}(\mbf{A}).
\]
It follows from Theorem \ref{thm2:stabilityK1} and $(iv)_{\le L+1}$ that $\Psi$ is a pro isomorphism for $q\le L+1$ and $n-p\ge\max(2q+r-2,r+1)$.
Furthermore, by Sublemma \ref{sublem:step3}, we see that the diagram
\[
\xymatrix{
	\Delta^n_{p+1}\otimes H_q(E_{n-p-1}(\mbf{A})) \ar[d]_{\sum_{k=0}^{p+1}(-1)^kd_k\otimes\delta} \ar[r]^-\Psi
		& E^1_{p+1,q}(\mbf{A}) \ar[d]^{d^1} \\
	\Delta^n_p\otimes H_q(E_{n-p}(\mbf{A})) \ar[r]^-\Psi & E^1_{p,q}(\mbf{A})
}
\]
commutes for $q\le L+1$ and $n-p\ge 2q+r-1$, where $d_k$ are the face maps of $\Delta^n$ and $\delta$ is the canonical map $H_q(E_{n-p-1}(\mbf{A}))\to H_q(E_{n-p}(\mbf{A}))$.

\begin{claim}\label{claim:step3}
For $q\le L$ and $0<p\le n-2q-r+1$,
\[
	E^2_{p,q}(\mbf{A}) = 0.
\]
\end{claim}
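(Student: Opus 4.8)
The plan is to follow the proof of Claim~\ref{claim:step1} almost verbatim, with $\msf{SUni}_{\mbf{A},n}$ replaced by $\Delta^n$ and the Volodin spaces $V_\bullet$ replaced by the groups $E_\bullet$. Set $F_{p,q}(\mbf{A}):=\Delta^n_p\otimes H_q(E_{n-p}(\mbf{A}))$, viewed as a complex in $p$ with differential $\partial:=\sum_k(-1)^kd_k\otimes\delta$, where the $d_k$ are the face maps of $\Delta^n$ and $\delta$ the canonical stabilisation map. As recorded just before the claim, $\Psi$ is a pro isomorphism $F_{p,q}(\mbf{A})\xrightarrow{\sim}E^1_{p,q}(\mbf{A})$ whenever $q\le L+1$ and $n-p\ge\max(2q+r-2,r+1)$, and the squares relating $\partial$ to $d^1$ commute whenever $q\le L+1$ and $n-p\ge 2q+r-1$. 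Now $0<p\le n-2q-r+1$ is exactly the condition $n-p\ge 2q+r-1$, and for $q\ge 1$ this also forces $n-p\ge\max(2q+r-2,r+1)$; so throughout the range of the claim (with $q\ge 1$) the map $\Psi$ is a pro isomorphism on $F_{p-1,q}(\mbf{A})$ and on $F_{p,q}(\mbf{A})$, and the squares $F_{p+1,q}\to F_{p,q}$ and $F_{p,q}\to F_{p-1,q}$ are both compatible with $d^1$ --- even though $\Psi$ need not be a pro isomorphism on $F_{p+1,q}(\mbf{A})$ at the very top of the range. When $q=0$, $\Psi$ is an isomorphism of complexes (it is the identity, since $H_0$ of any group is $\mbb{Z}$) and $F_{\bullet,0}(\mbf{A})$ is the chain complex of the contractible $\Delta^n$, so $E^2_{p,0}(\mbf{A})=0$ for $p>0$; from now on assume $q\ge 1$.

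The first step is to show that $H_p(F_{\bullet,q}(\mbf{A}))=0$ for $q\le L$ and $0<p\le n-2q-r+1$. Since $n-p\ge 2q+r-1$, the hypothesis $(vi)_{l=q}$ (available as $q\le L$) tells us that $\delta\colon H_q(E_{n-p}(\mbf{A}))\to H_q(E_{n-p+1}(\mbf{A}))$ is a pro isomorphism and that $\delta\colon H_q(E_{n-p-1}(\mbf{A}))\to H_q(E_{n-p}(\mbf{A}))$ is a pro epimorphism. Using the former, together with the flatness of the free $\mbb{Z}$-modules $\Delta^n_j$ and the fact that the cokernels of the simplicial differentials of the contractible $\Delta^n$ are free, one identifies $\ker\bigl(\partial\colon F_{p,q}(\mbf{A})\to F_{p-1,q}(\mbf{A})\bigr)$ with $Z_p(\Delta^n)\otimes H_q(E_{n-p}(\mbf{A}))$, where $Z_p(\Delta^n)$ is the kernel of the alternating sum of faces $\Delta^n_p\to\Delta^n_{p-1}$. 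As $\Delta^n$ is contractible, $\Delta^n_{p+1}\twoheadrightarrow Z_p(\Delta^n)$ for $p>0$; combined with the pro epimorphism $\delta\colon H_q(E_{n-p-1}(\mbf{A}))\to H_q(E_{n-p}(\mbf{A}))$ this shows that $\partial\colon F_{p+1,q}(\mbf{A})\to\ker\bigl(\partial\colon F_{p,q}(\mbf{A})\to F_{p-1,q}(\mbf{A})\bigr)$ is a pro epimorphism, so $H_p(F_{\bullet,q}(\mbf{A}))=0$.

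The second step deduces $E^2_{p,q}(\mbf{A})=0$ by the diagram chase of Claim~\ref{claim:step1}. Since $\Psi$ is a pro isomorphism on $F_{p,q}(\mbf{A})$ and $F_{p-1,q}(\mbf{A})$ compatibly with the differentials, the pro groups $\ker(\partial\colon F_{p,q}(\mbf{A})\to F_{p-1,q}(\mbf{A}))$ and $\ker(d^1\colon E^1_{p,q}(\mbf{A})\to E^1_{p-1,q}(\mbf{A}))$ are pro isomorphic. Fix $m$; for $m'\ge m$ large enough and a cycle $x\in\ker(d^1\colon E^1_{p,q}(A_{m'})\to E^1_{p-1,q}(A_{m'}))$, lift $\iota_{m',m}x$ through $\Psi$ to a cycle $y$ in $\ker(\partial)$ at some level $s\ge m$; since $H_p(F_{\bullet,q}(\mbf{A}))=0$, after enlarging $s$ we may write $y=\partial z$ with $z\in F_{p+1,q}(A_s)$; and by commutativity of the square $F_{p+1,q}\to F_{p,q}$ versus $E^1_{p+1,q}\to E^1_{p,q}$ (valid since $n-p\ge 2q+r-1$) the image of $x$ at level $m$ equals $d^1(\Psi(z))$, hence lies in the image of $d^1$. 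Thus $E^2_{p,q}(A_{m'})\to E^2_{p,q}(A_m)$ is zero, and as $m$ was arbitrary $E^2_{p,q}(\mbf{A})=0$.

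The only genuine difficulty, as in Claim~\ref{claim:step1}, is the bookkeeping of the pro structure maps at the top edge $p=n-2q-r+1$ of the range, where $\Psi$ fails to be a pro isomorphism on $F_{p+1,q}(\mbf{A})$: there one cannot simply quote $E^2_{p,q}(\mbf{A})\simeq H_p(F_{\bullet,q}(\mbf{A}))$ and must instead argue term by term in the pro system, choosing the intermediate indices uniformly as in the earlier steps. The remaining ingredients --- flatness of the $\Delta^n_j$, freeness of the cokernels of the differentials of $C_*(\Delta^n)$, and compatibility of $\Psi$ with the differentials --- are either routine or already established in the passage preceding the claim.
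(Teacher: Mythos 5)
Your proof is correct and takes essentially the same route as the paper's: both identify $E^1_{\bullet,q}(\mbf{A})$ with $\Delta^n_\bullet\otimes H_q(E_{n-\bullet}(\mbf{A}))$ via $\Psi$ in the stated range, prove $H_p(F_{\bullet,q}(\mbf{A}))=0$ from the contractibility of $\Delta^n$ together with $(vi)_{l\le L}$, and transfer the vanishing to $E^2_{p,q}(\mbf{A})$. The only cosmetic difference is in the edge cases: the paper disposes of $q=1$ at once by noting $H_1(E_{n-p}(\mbf{A}))=0$, hence $E^1_{p,1}(\mbf{A})=0$, while you run the top edge $p=n-2q-r+1$ (where $\Psi$ need not be a pro isomorphism on $F_{p+1,q}$) through the term-by-term diagram chase of Claim~\ref{claim:step1}; both treatments are valid.
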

\begin{proof}
Suppose that $q\le L$ and $0<p\le n-2q-r+1$.
Put $F_{p,q}(\mbf{A}) := \Delta^n_p\otimes H_q(E_{n-p}(\mbf{A}))$, which we regard as a complex in $p$ with differential $\sum_{k=0}^{p+1}(-1)^kd_k\otimes\delta$.
Then, by $(vi)_{<L+1}$, we have
\[
	\ker(F_{p,q}(\mbf{A}) \to F_{p-1,q}(\mbf{A})) \simeq \ker(\mbb{Z}\Delta^n_p\to \mbb{Z}\Delta^n_{p-1}) \otimes H_q(E_{n-p}(\mbf{A})).
\]
Again by $(vi)_{<L+1}$, the canonical map
\[
	H_q(E_{n-p-1}(\mbf{A})) \to H_q(E_{n-p}(\mbf{A}))
\]
is a pro epimorphism.
Since $\Delta^n$ is contractible, we conclude that $H_p(F_{\bullet,q}(\mbf{A}))=0$.

Now, we have a pro isomorphism
\[
	E^2_{p,q}(\mbf{A}) \simeq H_p(F_{\bullet,q}(\mbf{A}))
\]
for $n-p-1\ge r+1$.
Our assumption says $n-p-1\ge 2q+r-2$; hence, in case $2q+r-2\ge r+1$, the vanishing of $E^2_{p,q}(\mbf{A})$ follows form the one of $H_p(F_{\bullet,q}(\mbf{A}))$.

It remains to show the case $q=1$.
However, in this case,
\[
	E^1_{p,1}(\mbf{A}) \xrightarrow[\Psi]{\sim} \Delta^n\otimes H_1 (E_{n-p}(\mbf{A})) = 0.
\]
This completes the proof.
\end{proof}

Suppose that $n\ge 2L+r$.
Then the $E^2$-terms with $p+q=L+1$ are zero unless $E^2_{0,L+1}(\mbf{A})$.
Hence, the edge map
\[
	E^1_{0,L+1}(\mbf{A}) \to E^\infty_{L+1}(\mbf{A})
\]
is a pro epimorphism.
The left hand side is pro isomorphic to $\Delta^n_0\otimes H_{L+1}(E_n(\mbf{A}))$ by $\Psi$.
According to Corollary \ref{cor:vdKallen}, $\tilde{H}_i(C_*(\msf{SUni}_{\mbf{A},n+1})) = 0$ for $n\ge i+r$.
Hence, we have a pro isomorphism
\[
	E^\infty_{L+1}(\mbf{A})=H_{L+1}(E_{n+1}(\mbf{A}),C_\bullet(\msf{SUni}_{\mbf{A},n+1})) \simeq H_{L+1}(E_{n+1}(\mbf{A})).
\]
By using Sublemma \ref{sublem:step3}, we see that the edge map
\[
	\Delta^n_0\otimes H_{L+1}(E_n(\mbf{A})) \to H_{L+1}(E_{n+1}(\mbf{A}))
\]
coincides as a pro morphism with the sum of the canonical map $\delta\colon H_{L+1}(E_n(\mbf{A})) \to H_{L+1}(E_{n+1}(\mbf{A}))$.
Hence, the $\delta$ is a pro epimorphism.
This proves the first half of $(vi)_{l=L+1}$.

Next, suppose that $n\ge 2L+r+1$.
Then by Claim \ref{claim:step3}, $E^s_{s,L-s+2}(\mbf{A}) = 0$ for $s\ge 2$.
Hence, we have an exact sequence
\[
\xymatrix@1{
	\Delta^n_1\otimes H_{L+1}(E_{n-1}(\mbf{A})) \ar[r] & \Delta^n_0\otimes H_{L+1}(E_n(\mbf{A})) \ar[r] & H_{L+1}(E_{n+1}(\mbf{A})) \ar[r] & 0.
}
\]
Since $H_{L+1}(E_{n-1}(\mbf{A})) \to H_{L+1}(E_n(\mbf{A}))$ is a pro epimorphism, we conclude that the canonical map
\[
	H_{L+1}(E_n(\mbf{A})) \xrightarrow{\sim} H_{L+1}(E_{n+1}(\mbf{A}))
\]
is a pro isomorphism.
This proves the second half of $(vi)_{l=L+1}$.

Finally, since $H_{L+1}(E_{n-1}(\mbf{A})) \to H_{L+1}(E_n(\mbf{A}))$ is a pro epimorphism, Sublemma \ref{sublem:step3} implies that the action of $\Sigma_n$ on $H_{L+1}(E_n(\mbf{A}))$ is pro trivial.
This proves $(v)_{l=L+1}$

\subsection{Step 4: $E$ to $V$}\label{step4}

Suppose that $(i,ii)_{l<L-1}$, $(iii)_{l\le L-1}$ and $(vi)_{l\le L+1}$ hold.
We show $(i,ii)_{l\le L-1}$.

Suppose that $n\ge 2L+r$.
Consider the spectral sequences (\ref{eq:volodin}) and the canonical morphism between them;
\[
\xymatrix@C-1pc@R-1pc{
	{}^nE^2_{p,q}(\mbf{A}) = H_p(E_n(\mbf{A}),H_q(V_n(\mbf{A}))) \ar@{=>}[r] \ar[d] 	 & H_{p+q}\bigl(\bigcup_{\sigma\in\Pi_n} BT^\sigma(\mbf{A})\bigr) \\
	{}^{n+1}E^2_{p,q}(\mbf{A}) = H_p(E_{n+1}(\mbf{A}),H_q(V_{n+1}(\mbf{A}))) \ar@{=>}[r] & H_{p+q}\bigl(\bigcup_{\sigma\in\Pi_{n+1}} BT^\sigma(\mbf{A})\bigr).
}
\]
By $(iii)_{\le L-1}$, for $q\le L-1$, the $E^2$-terms fit into the extensions
\[
\xymatrix@C-1pc{
	0 \ar[r] & H_p(E_n(\mbf{A}))\otimes H_q(V_n(A^m)) \ar[r] \ar[d] 
		& {}^nE^2_{p,q}(\mbf{A}) \ar[r] \ar[d] & \Tor(H_{p-1}(E_n(\mbf{A})),H_q(V_n(\mbf{A}))) \ar[r] \ar[d] & 0 \\
	0 \ar[r] & H_p(E_{n+1}(\mbf{A}))\otimes H_q(V_{n+1}(\mbf{A})) \ar[r] 
		& {}^{n+1}E^2_{p,q}(\mbf{A}) \ar[r] & \Tor(H_{p-1}(E_{n+1}(\mbf{A})),H_q(V_{n+1}(\mbf{A}))) \ar[r] & 0.
}
\]
Hence, it follows from $(i)_{<L-1}$ and $(vi)_{\le L+1}$ that the map 
\[
	{}^nE^2_{p,q}(\mbf{A}) \to {}^{n+1}E^2_{p,q}(\mbf{A})
\]
is a pro epimorphism for $q<L-1$ and $p\le L+1$, and it is a pro isomorphism if further $n\ge 2p+r-1$.
Finally, by Theorem \ref{thm:proacyclic}, ${}^nE^\infty_i(\mbf{A}) \simeq {}^{n+1}E^\infty_i(\mbf{A})$ for $n\ge 2i+1$.

Bringing these together, we have:
\begin{enumerate}[(1)]
\item ${}^nE^2_{p,q}(\mbf{A}) \simeq {}^{n+1}E^2_{p,q}(\mbf{A})$ for $p+q=L-1$ and $p\ge 1$.
\item ${}^nE^2_{p,q}(\mbf{A}) \simeq {}^{n+1}E^2_{p,q}(\mbf{A})$ for $p+q=L$ and $p\ge 2$.
\item ${}^nE^2_{p,q}(\mbf{A}) \twoheadrightarrow {}^{n+1}E^2_{p,q}(\mbf{A})$ for $p+q=L+1$ and $p\ge 3$.
\item ${}^nE^\infty_{L-1}(\mbf{A}) \simeq {}^{n+1}E^\infty_{L-1}(\mbf{A})$ and ${}^nE^\infty_L(\mbf{A}) \simeq {}^{n+1}E^\infty_L(\mbf{A})$.
\end{enumerate}
Then, by Lemma \ref{lem:spectral2} below, we conclude that the canonical map
\[
	{}^nE^2_{0,L-1}(\mbf{A}) \xrightarrow{\sim} {}^{n+1}E^2_{0,L-1}(\mbf{A})
\]
is a pro isomorphism.
By $(iii)_{\le L-1}$, the left hand side (resp.\ right hand side) is pro isomorphic to $H_{L-1}(V_n(\mbf{A}))$ (resp.\ $H_{L-1}(V_{n+1}(\mbf{A}))$).
Hence, we get the second part of $(i)_{l=L-1}$.

Next, we show $(ii)_{l=L-1}$.
Now, the canonical map
\[
	H_{L-1}(V_n(\mbf{A})) \xrightarrow{\sim} H_{L-1}(V_{n+2}(\mbf{A}))
\]
is a $\Sigma_n$-equivariant pro isomorphism.
Hence, it suffices to show that $\Sigma_{n+2}$ (and thus $\Sigma_n$) acts pro trivially on $H_{L-1}(V_{n+2}(\mbf{A}))$.
Now, the permutation $\tau_{n+1,n+2}$ acts pro trivially on $H_{L-1}(V_{n+2}(\mbf{A}))$, since it acts trivially on the image of the above map.
Since $\Sigma_{n+2}$ is the normal closure of $\tau_{n+1,n+2}$, $\Sigma_{n+2}$ also acts pro trivially on $H_{L-1}(V_{n+2}(\mbf{A}))$.

In Step 1, we have seen that the map (\ref{eq3:step1})
\[
	C_0(\msf{SUni}_{\mbf{A},n})\otimes H_{L-1}(V_{n-1}(\mbf{A})) \to H_{L-1}(V_n(\mbf{A}))
\]
sending $f\otimes u \mapsto \sigma_{\{i\}} (\delta u) \sigma_{\{i\}}^{-1}$ ($\dom f= \{i\}$) is a pro epimorphism for $n\ge 2L+r$.
Now, we know that $\sigma_{\{i\}} (\delta u) \sigma_{\{i\}}^{-1} = \delta u$.
Therefore, the canonical map $\delta\colon H_{L-1}(V_{n-1}(\mbf{A})) \to H_{L-1}(V_n(\mbf{A}))$ is a pro epimorphism.
This completes the proof of $(i)_{l=L-1}$.

\begin{lemma}[{\cite[Theorem A.6]{Su96}}]\label{lem:spectral2}
Let $\mcal{A}$ be an abelian category.
Let $f\colon E\to \tilde{E}$ be a morphism of first quadrant homological spectral sequence in $\mcal{A}$, and let $L>0$.
Assume that $f$ induces:
\begin{enumerate}[(1)]
\item A monomorphism $E^2_{p,q} \hookrightarrow \tilde{E}^{2}_{p,q}$ for $p+q=L-1$, $p\ge 1$.
\item An isomorphism $E^2_{p,q} \xrightarrow{\sim} \tilde{E}^2_{p,q}$ for $p+q=L$, $p\ge 2$.
\item An epimorphism $E^2_{p,q} \twoheadrightarrow \tilde{E}^2_{p,q}$ for $p+q=L+1$, $p\ge 3$.
\item Isomorphisms $E^\infty_{L-1} \xrightarrow{\sim} \tilde{E}^\infty_{L-1}$ and $E^\infty_L \xrightarrow{\sim} \tilde{E}^\infty_L$.
\end{enumerate}
Then $f$ induces an isomorphism
\[
	E^2_{0,L-1} \xrightarrow{\simeq} \tilde{E}^2_{0,L-1}.
\]
\end{lemma}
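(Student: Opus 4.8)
The plan is to treat this as what it is, a purely homological comparison statement for a morphism of first quadrant spectral sequences --- an abstract relative of Zeeman's comparison theorem --- and to prove it by a diagram chase organized around the three anti-diagonals $p+q\in\{L-1,L,L+1\}$, following \cite{Su96}. The one elementary input is the behaviour of homology under a map of three-term complexes $(A\to B\to C)\to(A'\to B'\to C')$ (composites zero): if $B\to B'$ is a monomorphism and $A\to A'$ an epimorphism, then the map on homology at the middle term is a monomorphism; if moreover $B\to B'$ is an isomorphism and $C\to C'$ a monomorphism, it is an isomorphism; and if $B\to B'$ is an epimorphism and $C\to C'$ a monomorphism, it is an epimorphism. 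Applying these to the $d^r$-complexes $E^r_{p+r,\ast}\xrightarrow{d^r}E^r_{p,\ast}\xrightarrow{d^r}E^r_{p-r,\ast}$, the first step is an induction on the page number $r\ge2$ showing that $f$ induces, on $E^r$: $(a_r)$ a monomorphism on $E^r_{p,q}$ for $p+q=L-1$, $p\ge1$; $(b_r)$ an isomorphism for $p+q=L$, $p\ge r$; $(b'_r)$ a monomorphism for $p+q=L$, $p\ge2$; $(c_r)$ an epimorphism for $p+q=L+1$, $p\ge r+1$. The base case $r=2$ is exactly the hypotheses (1)--(3), hypothesis (2) giving both $(b_2)$ and $(b'_2)$.

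In the inductive step each of the four statements propagates by the three-term lemma, using the other three on page $r$. The essential point is that $(a)$ and $(b')$ keep the \emph{same} index ranges at every page: the ``$B$ mono, $A$ epi $\Rightarrow$ homology mono'' case imposes no condition on the $C$-slot, while the incoming differential into a bidegree on the line $L-1$ (resp.\ $L$) always originates at an index where $(b_r)$ (resp.\ $(c_r)$) applies. Because the spectral sequences are first quadrant, these page statements stabilize, so one obtains $E^\infty_{p,L-1-p}(E)\hookrightarrow E^\infty_{p,L-1-p}(\tilde E)$ for $p\ge1$ (from $(a_r)$ for all $r$) and $E^\infty_{p,L-p}(E)\hookrightarrow E^\infty_{p,L-p}(\tilde E)$ for $p\ge2$ (from $(b'_r)$ for all $r$).

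Next I would feed these into hypothesis (4). A filtered isomorphism whose associated graded map is a monomorphism in every bidegree above a given one is automatically strict in that range, so the isomorphism $H_{L-1}(E)\xrightarrow{\sim}H_{L-1}(\tilde E)$ forces $E^\infty_{0,L-1}(E)\xrightarrow{\sim}E^\infty_{0,L-1}(\tilde E)$, and $H_L(E)\xrightarrow{\sim}H_L(\tilde E)$ forces $E^\infty_{p,L-p}(E)\xrightarrow{\sim}E^\infty_{p,L-p}(\tilde E)$ for $p\ge2$. It then remains to transport the first of these along the tower of surjections $E^2_{0,L-1}\twoheadrightarrow E^3_{0,L-1}\twoheadrightarrow\cdots\twoheadrightarrow E^{L+1}_{0,L-1}=E^\infty_{0,L-1}$, whose $r$-th kernel is the image of $d^r\colon E^r_{r,L-r}\to E^r_{0,L-1}$. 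Surjectivity of $E^2_{0,L-1}\to\tilde E^2_{0,L-1}$ falls out of a descending induction starting from the isomorphism at $E^\infty$: since $E^r_{r,L-r}(E)\xrightarrow{\sim}E^r_{r,L-r}(\tilde E)$ by $(b_r)$ at $p=r$ (valid for $2\le r\le L$), the image of $d^r$ maps \emph{onto} the image of $\tilde d^r$, so the snake lemma carries surjectivity down the tower.

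The step I expect to be the main obstacle is \emph{injectivity} at $E^2_{0,L-1}$, and it is precisely why hypothesis (4) is imposed on $H_L$ and not only on $H_{L-1}$. A naive descending induction does not close: to carry injectivity of $f$ on $E^{r+1}_{0,L-1}$ down to $E^r_{0,L-1}$ via the snake lemma one needs the isomorphism $E^r_{r,L-r}(E)\xrightarrow{\sim}E^r_{r,L-r}(\tilde E)$ to send the $d^r$-cycles \emph{onto} the $d^r$-cycles, i.e.\ to be strict for the cycle subobject. To secure this I would first deduce, exactly as above but using the $H_L$-isomorphism, that $E^\infty_{r,L-r}(E)\xrightarrow{\sim}E^\infty_{r,L-r}(\tilde E)$ for $r\ge2$; then run a second descending induction up the tower $E^{r+1}_{r,L-r}\twoheadrightarrow\cdots\twoheadrightarrow E^\infty_{r,L-r}$, whose successive kernels are images of incoming $d^s$ that are surjected onto by $(c_s)$ (the $L+1$-line epimorphism, applied at index $r+s$), to conclude that $E^{r+1}_{r,L-r}(E)\to E^{r+1}_{r,L-r}(\tilde E)$ is an epimorphism. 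Since the boundary subobject of $E^r_{r,L-r}(E)$ is also surjected onto (by $(c_r)$ at index $2r$), it follows that the cycle subobject of $E^r_{r,L-r}(E)$ surjects onto that of $E^r_{r,L-r}(\tilde E)$ --- the required strictness --- so the snake-lemma step goes through, yielding injectivity and hence the isomorphism $E^2_{0,L-1}\xrightarrow{\simeq}\tilde E^2_{0,L-1}$.
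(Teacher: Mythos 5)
Your argument is correct. Note first that the paper itself gives no proof of this lemma: it is quoted verbatim from Suslin's appendix as \cite[Theorem A.6]{Su96}, so there is no in-paper argument to compare against. Your proof is the standard Zeeman-type comparison argument that Suslin's appendix carries out: the page-by-page induction propagating the four statements $(a_r)$, $(b_r)$, $(b'_r)$, $(c_r)$ closes correctly (I checked the index shifts: the incoming differential into the line $p+q=L-1$ lands at first index $p+r\ge r$ where $(b_r)$ applies, into the line $p+q=L$ at first index $p+r\ge r+1$ where $(c_r)$ applies, and the outgoing differentials land at first indices $\ge 1$ resp.\ $\ge 2$ where $(a_r)$ resp.\ $(b'_r)$ apply), the passage from the $E^\infty$-monomorphisms plus hypothesis (4) to isomorphisms on the relevant $E^\infty_{p,q}$ is a correct descending induction on the filtration, and you correctly isolated the genuinely delicate point --- that injectivity of $E^{r}_{0,L-1}\to\tilde E^{r}_{0,L-1}$ requires the isomorphism $E^r_{r,L-r}\xrightarrow{\sim}\tilde E^r_{r,L-r}$ to carry $d^r$-cycles onto $\tilde d^r$-cycles --- and supplied it via the epimorphisms $E^{r+1}_{r,L-r}\twoheadrightarrow\tilde E^{r+1}_{r,L-r}$ and on the boundary subobjects, both obtained from $(c_s)$ and the $E^\infty$-isomorphism on the $L$-line. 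This is exactly why hypothesis (4) must be imposed on $H_L$ as well as $H_{L-1}$, as you observe. All the diagram chases used are instances of the snake and five lemmas, so they are valid in an arbitrary abelian category (in particular in pro-abelian groups, where the paper applies the lemma).
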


\subsection{Homology pro stability for $\GL_n$}

Now, we can prove Theorem \ref{main}.
We restate it here.
\begin{theorem}\label{thm2:prostability}
Let $\mbf{A}$ be a commutative Tor-unital pro ring.
Let $r=\max(\sr(\mbf{A}),2)$ and $l\ge 0$.
Then the canonical map
\[
	H_l(\GL_n(\mbf{A})) \to H_l(\GL_{n+1}(\mbf{A}))
\]
is a pro epimorphism for $n\ge 2l+r-2$ and a pro isomorphism for $n\ge 2l+r-1$.
\end{theorem}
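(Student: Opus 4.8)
The plan is to deduce the statement from homology pro stability for the elementary subgroups (Theorem~\ref{thm:prostability}(vi)) via the extension by relative $K_1$. For $l=0$ the assertion is trivial, and for $l=1$ it follows from Theorem~\ref{thm:stabilityK1}, so assume $l\ge 2$; then every $n\ge 2l+r-2$ satisfies $n\ge\max(3,\sr(\mbf{A})+1)$. Since $\mbf{A}$ is Tor-unital, $\mbf{A}/\mbf{A}^2=\Tor^{\mbb{Z}\ltimes\mbf{A}}_1(\mbb{Z},\mbb{Z})=0$, so the results of \S\ref{K1} apply to $\mbf{R}:=\mbb{Z}\ltimes\mbf{A}$. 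By Theorem~\ref{thm:Vaserstein}, for $n\ge\sr(\mbf{A})+1$ the group $E_n(\mbf{R},\mbf{A})$ is the kernel of $\GL_n(\mbf{A})\to K_1(\mbf{R},\mbf{A})$, hence normal in $\GL_n(\mbf{A})$ with quotient the fixed pro abelian group $\mbf{K}:=K_1(\mbf{R},\mbf{A})$; moreover $E_n(\mbf{R},\mbf{A})$ carries the homology of $E_n(\mbf{A})$ by Corollary~\ref{cor:Tits}. Thus, for such $n$, there is a short exact sequence of pro groups
\[
	1\longrightarrow E_n(\mbf{R},\mbf{A})\longrightarrow\GL_n(\mbf{A})\longrightarrow\mbf{K}\longrightarrow 1,
\]
compatible with the stabilization maps, with $\mbf{K}$ independent of $n$.

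The key step is: the conjugation action of $\GL_n(\mbf{A})$ on $H_q(E_n(\mbf{R},\mbf{A}))$ is pro trivial for $n\ge 2q+r-1$. As $E_n(\mbf{R},\mbf{A})$ acts on its own homology by inner automorphisms, this action factors through $\mbf{K}$. Given $g\in\GL_n(A_m)$, the relative Whitehead lemma places $\diag(g,g^{-1})$ in $E_{2n}(R_m,A_m)$, so conjugation by $\diag(g,g^{-1})$ is inner in $E_{2n}(R_m,A_m)$ and therefore trivial on $H_q(E_{2n}(R_m,A_m))$. Since the stabilization $\phi\colon E_n(\mbf{R},\mbf{A})\to E_{2n}(\mbf{R},\mbf{A})$, $\alpha\mapsto\diag(\alpha,1)$, intertwines the action of $g$ on the source with that of $\diag(g,g^{-1})$ on the target, $g$ acts trivially on the image of $\phi_*\colon H_q(E_n(\mbf{R},\mbf{A}))\to H_q(E_{2n}(\mbf{R},\mbf{A}))$; and, transferring Theorem~\ref{thm:prostability}(vi) along Corollary~\ref{cor:Tits}, $\phi_*$ is a pro isomorphism for $n\ge 2q+r-1$. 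Cancelling it gives the claim, uniformly in $g$ since the cutoff coming from $\phi_*$ depends on $m,q,n$ only.

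I then run the Hochschild--Serre spectral sequence of the displayed extension,
\[
	{}^nE^2_{p,q}=H_p\bigl(\mbf{K},H_q(E_n(\mbf{R},\mbf{A}))\bigr)\Longrightarrow H_{p+q}(\GL_n(\mbf{A})),
\]
and compare it, as $n$ varies, with its $\GL_{n+1}$-version. For $q\le l$ the coefficient module is pro trivial by the key step, so the universal coefficient theorem makes ${}^nE^2_{p,q}$ functorial in $H_q(E_n(\mbf{R},\mbf{A}))$ with $\mbf{K}$ fixed; Theorem~\ref{thm:prostability}(vi) (via Corollary~\ref{cor:Tits}) then shows that ${}^nE^2_{p,q}\to{}^{n+1}E^2_{p,q}$ is a pro isomorphism for all $p$ whenever $q\le l$ and $n\ge 2q+r-1$, while the edge term ${}^nE^2_{0,l}\to{}^{n+1}E^2_{0,l}$, being induced by $H_l(E_n(\mbf{R},\mbf{A}))\to H_l(E_{n+1}(\mbf{R},\mbf{A}))$ on $\mbf{K}$-coinvariants, is a pro epimorphism already for $n\ge 2l+r-2$. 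Every differential landing in an $E^\infty$-contribution of total degree $l$ issues from an $E^2$-term of total degree $l+1$ with first index $\ge 2$, hence of second index $\le l-1$; so the comparison isomorphisms above suffice to conclude, by a spectral sequence comparison of the kind used in \S\ref{step1}--\S\ref{step4} (cf.\ Lemmas~\ref{lem:spectral1} and~\ref{lem:spectral2}), that $H_l(\GL_n(\mbf{A}))\to H_l(\GL_{n+1}(\mbf{A}))$ is a pro epimorphism for $n\ge 2l+r-2$ and a pro isomorphism for $n\ge 2l+r-1$.

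The main obstacle I anticipate is the conjugation-action step: the Whitehead identity is elementary, but making the triviality pro-uniform in $g$ while controlling the index shifts coming from Corollary~\ref{cor:Tits} and the stable range for $E_n(\mbf{A})$ is delicate, and the resulting gap between the range $n\ge 2q+r-1$ needed for the fibre action and the target range $n\ge 2l+r-2$ is exactly what forces the sharp spectral sequence comparison in the last step rather than a naive five-lemma argument.
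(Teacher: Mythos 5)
Your proof is correct and follows essentially the same route as the paper's: after disposing of $l=0,1$ via Theorem~\ref{thm:stabilityK1}, both arguments compare the Hochschild--Serre spectral sequences of the extension of $\GL_n(\mbf{A})$ by its elementary subgroup with quotient the relative $K_1\simeq H_1(\GL(\mbf{A}))$, feeding in Theorem~\ref{thm:prostability}(vi). Your version merely works with the genuinely levelwise-exact sequence $1\to E_n(\mbf{R},\mbf{A})\to\GL_n(\mbf{A})\to K_1(\mbf{R},\mbf{A})\to 1$ for $\mbf{R}=\mbb{Z}\ltimes\mbf{A}$ (identified with the paper's sequence via Corollary~\ref{cor:Tits} and Theorem~\ref{thm:excisionK1}) and spells out the pro-triviality of the coefficient action and the degree bookkeeping that the paper's proof leaves implicit.
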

\begin{proof}
The case $l=0$ is clear.
The case $l=1$ is proved in Theorem \ref{thm:stabilityK1}.
Let $l\ge 2$ and $n\ge 2l+r-2$.
Then, by Theorem \ref{thm:stabilityK1} and Corollary \ref{cor:Tits}, the sequence
\[
\xymatrix@1{
	0 \ar[r] & E_n(\mbf{A}) \ar[r] & \GL_n(\mbf{A}) \ar[r] & H_1(\GL(\mbf{A})) \ar[r] & 0.
}
\]
is exact up to pro isomorphisms.
Now, we have a morphism of spectral sequences;
\[
\xymatrix@C-1pc@R-1pc{
	{}^nE_{p,q}^2 = H_p(H_1(\GL(\mbf{A})),H_q(E_n(\mbf{A}))) \ar@{=>}[r] \ar[d]  & H_{p+q}(\GL_n(\mbf{A})) \ar[d] \\
	{}^{n+1}E_{p,q}^2 = H_p(H_1(\GL(\mbf{A})),H_q(E_{n+1}(\mbf{A}))) \ar@{=>}[r] & H_{p+q}(\GL_{n+1}(\mbf{A})).
}
\]
Using these spectral sequences, we can easily deduce the theorem from Theorem \ref{thm:prostability} (vi).
\end{proof}

\begin{corollary}\label{cor:protrivial}
Let $\mbf{B}$ be a pro ring with a two-sided ideal $\mbf{A}$ and $r=\max(\sr(\mbf{A}),2)$.
Suppose that $\mbf{A}$ is commutative and Tor-unital.
Then the conjugate action of $\GL_n(\mbf{B})$ on $H_l(\GL_n(\mbf{A}))$ is pro trivial for $n\ge 2l+r-1$.
\end{corollary}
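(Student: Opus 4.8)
The plan is to deduce the statement from homology stability (Theorem \ref{main}) together with the classical Eilenberg swindle.

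First, since $\mbf{A}$ is a two-sided ideal of $\mbf{B}$ and hence of $\mbb{Z}\ltimes\mbf{B}$, the group $\GL_n(\mbf{A})$ is normal in $\GL_n(\mbf{B})$, and more generally $\GL_n(\mbf{B})$ normalizes the stable group $\GL(\mbf{A})\subset\GL(\mbb{Z}\ltimes\mbf{B})$; the inclusion $\GL_n(\mbf{A})\hookrightarrow\GL(\mbf{A})$ is equivariant for the conjugation actions of $\GL_n(\mbf{B})$. By Theorem \ref{main}, for $n\ge 2l+r-1$ every stabilization map $H_l(\GL_m(\mbf{A}))\to H_l(\GL_{m+1}(\mbf{A}))$ with $m\ge n$ is a pro isomorphism, and these are compatible, so $H_l(\GL_n(\mbf{A}))\to H_l(\GL(\mbf{A}))$ is a pro isomorphism. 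Now a $G$-equivariant pro isomorphism of pro abelian groups whose target carries the trivial $G$-action forces the action on the source to be pro trivial: the pro-kernel is pro zero, so the index level at which it is annihilated works for every element of $G$ simultaneously. Hence it suffices to prove that $\GL_n(\mbf{B})$ acts trivially on $H_*(\GL(\mbf{A}))$.

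For this, fix $g\in\GL_n(\mbf{B})$, viewed as a matrix supported on the first $n$ coordinates, write $c_g$ for conjugation by $g$, and let $s\colon\GL(\mbf{A})\to\GL(\mbf{A})$ be the shift $\alpha\mapsto 1_n\oplus\alpha$, whose image is supported on the coordinates indexed $>n$. Since $g$ and the image of $s$ are supported on disjoint coordinate blocks they commute, so $c_g\circ s=s$ and therefore $(c_g)_*\circ s_*=s_*$ on $H_*(\GL(\mbf{A}))$. On the other hand, the block-sum homomorphism $\GL(\mbf{A})\times\GL(\mbf{A})\to\GL(\mbf{A})$ makes $H_*(\GL(\mbf{A}))$ a connected unital ring under the Pontryagin product $\mu_*$, with unit the canonical generator of $H_0(\GL(\mbf{A}))=\mbb{Z}$, and $s$ is $\alpha\mapsto\mu(e,\alpha)$; hence $s_*(x)=\mu_*(1\otimes x)=x$ by the unit law, i.e.\ $s_*=\id$. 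Combining, $(c_g)_*=\id$.

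The only non-formal input is the identity $s_*=\id$ on $H_*(\GL(\mbf{A}))$ --- equivalently, that all block embeddings $\GL_m(\mbf{A})\to\GL(\mbf{A})$ agree on homology. This is the standard fact that $B\GL(\mbf{A})$ carries the block-sum $H$-space structure, making $H_*(\GL(\mbf{A}))$ a connected bialgebra; the argument is insensitive to $\mbf{A}$ being non-unital, cf.\ \cite{SW92}. The only other point requiring attention is that the index shift in the second paragraph depends on the index level alone --- which is precisely what a pro isomorphism provides --- so that the resulting pro triviality is uniform in $g\in\GL_n(\mbf{B})$.
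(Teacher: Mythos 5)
Your strategy---pass to a stable group and use a commuting block embedding---is the right instinct, but both non-formal inputs you invoke are genuine gaps. First, the claim that $H_l(\GL_n(\mbf{A}))\to H_l(\GL(\mbf{A}))$ is a pro isomorphism does not follow from Theorem \ref{main}. In the pro category a composite of \emph{infinitely} many pro isomorphisms followed by a colimit need not be a pro isomorphism: the index shift witnessing each single step $H_l(\GL_m(\mbf{A}))\to H_l(\GL_{m+1}(\mbf{A}))$ may grow with $m$, and a class in $H_l(\GL(A_j))$ only lifts to some $\GL_m$ with $m$ depending on the class, so no uniform shift brings it down to $\GL_n$. Indeed the paper proves the stabilization-to-infinity statement only in its final theorem, under the extra hypothesis that $\mbf{A}$ embeds in a unital pro ring of finite stable range, and that proof \emph{uses} Corollary \ref{cor:protrivial}---so this reduction is circular as well as unjustified.

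Second, the identity $s_*=\id$ on $H_*(\GL(\mbf{A}))$ is not ``the standard fact'' for non-unital rings. The shift $\alpha\mapsto 1_n\oplus\alpha$ agrees with the standard inclusion only up to conjugation by a permutation matrix, which lies in $\GL(\mbb{Z})\subset\GL(\mbb{Z}\ltimes\mbf{A})$ but \emph{not} in $\GL(\mbf{A})$; it is therefore an outer automorphism of $\GL(\mbf{A})$ and the inner-automorphism homotopy is unavailable. Triviality of this outer action is precisely the content of the Suslin--Wodzicki excision theorem (Theorem \ref{thm:SGH} in the paper), requires Tor-unitality, and fails for general non-unital $\mbf{A}$; the sentence ``the argument is insensitive to $\mbf{A}$ being non-unital'' is exactly where the difficulty is hidden. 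The paper's own proof avoids both problems by never leaving finite level: it takes the two embeddings $\alpha,\beta\colon\GL_n\to\GL_{2n}$ into the upper-left and lower-right blocks, each a pro isomorphism on $H_l$ by finitely many applications of Theorem \ref{main}, writes $\iota\alpha(x)=\beta(y)$ using that $\beta$ is a pro epimorphism, notes that $\alpha(g)$ commutes with the image of $\beta$ so that $\iota\alpha(gx)=c_{\alpha(g)}\beta(y)=\beta(y)=\iota\alpha(x)$, and concludes by the pro injectivity of $\alpha$. Replacing your $\GL(\mbf{A})$ by $\GL_{2n}(\mbf{A})$ and your shift $s$ by $\beta$ turns your sketch into that proof.
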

\begin{proof}
Let $\alpha$ (resp.\ $\beta$) be the map $\GL_n\to\GL_{2n}$ given by
\[
	g \mapsto \begin{pmatrix}g & 0\\ 0 & 1_n\end{pmatrix} \quad \text{resp.\ }g \mapsto \begin{pmatrix}1_n & 0\\ 0 & g\end{pmatrix}.
\]
According to the Theorem \ref{thm2:prostability}, the induced maps
\[
	\alpha,\beta\colon H_l(\GL_n(\mbf{A})) \xrightarrow{\sim} H_l(\GL_{2n}(\mbf{A}))
\]
are pro isomorphisms for $n\ge 2l+r-1$.

Write $\mbf{B}=\{B_m\}_{m\in J}$ and $\mbf{A}=\{A_m\}_{m\in J}$.
For each $m\in J$, choose $s(m)\ge m$ such that if $\alpha(a)=0$ with $a\in H_l(\GL_n(A_{s(m)}))$ then $\iota_{s(m),m}(a)=0$.
Next, choose $t(m)\ge s(m)$ such that for every $x\in H_l(\GL_n(A_{t(m)}))$ there exists $y\in H_l(\GL_n(A_{s(m)}))$ with $\iota_{t(m),s(m)}(\alpha(x)) = \beta(y)$.
Then, for $g\in\GL_n(B_{t(m)})$ and $x\in H_l(\GL_n(A_{t(m)}))$,
\[
\begin{split}
	\alpha(\iota_{t(m),s(m)}(gx)) &= \alpha(\iota_{t(m),s(m)}(g))\beta(y)  \\
								  &= \beta(y) \\
								  &= \alpha(\iota_{t(m),s(m)}(x)).
\end{split}
\]
Hence, $\iota_{t(m),m}(gx) = \iota_{t(m),m}(x)$.
This completes the proof.
\end{proof}

Suslin has shown that if a ring $A$ is Tor-unital then for every ring $B$ which contains $A$ as a two-sided ideal the conjugate action of $\GL(B)$ on $H_l(\GL(A))$ is trivial, cf.\ \cite[Corollary 4.5]{Su95}, see also \cite[Corollary 1.6]{SW92}.
Geisser-Hesselholt has generalized Suslin's result to a pro setting, cf.\ \cite[Proposition 1.3]{GH06}.
Here is a straightforward generalization of their result.
\begin{theorem}[Suslin, Geisser-Hesselholt]\label{thm:SGH}
Let $\mbf{B}$ be a pro ring with a two-sided ideal $\mbf{A}$.
Suppose that $\mbf{A}$ is Tor-unital.
Then the conjugate action of $\GL(\mbf{B})$ on $H_l(\GL(\mbf{A}))$ is pro trivial for all $l\ge 0$.
\end{theorem}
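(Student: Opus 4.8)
The plan is to reduce to the unital case and re-run the argument of Geisser--Hesselholt \cite[Proposition 1.3]{GH06}, a pro version of Suslin \cite[Corollary 4.5]{Su95} (see also Suslin--Wodzicki \cite[Corollary 1.6]{SW92}), which needs only the pro resolution technology of \S\ref{TorUnital} and not the stability results. First I would set $\mathbf R := \mathbb Z\ltimes\mathbf B$: this is a unital pro ring containing $\mathbf A$ as a two-sided ideal, $\GL(\mathbf B)$ is a subgroup of $\GL(\mathbf R)$, and $\GL(\mathbf A)$ is a normal subgroup of $\GL(\mathbf R)$, so it suffices to show that $\GL(\mathbf R)$ acts pro trivially on $H_l(\GL(\mathbf A))$; the passage from $\mathbf B$ to $\mathbf R$ is the only point not already contained in \cite{GH06}. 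Since inner automorphisms act trivially on group homology levelwise, this action factors through $\GL(\mathbf R)/\GL(\mathbf A)$, which embeds levelwise into $\GL(\mathbf R/\mathbf A)$.

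The heart of the matter is a $\GL(\mathbf R)$-equivariant pro homology excision statement, set up exactly as in Suslin \cite[\S4]{Su95} and in the same spirit as \S\ref{K1} of the present paper for $K_1$: one resolves the relevant subquotient complexes of the bar construction of $\GL(\mathbf A)$ by pseudo-free $\mathbf A$-modules, unwinds the arising semidirect-product decompositions using Lemma \ref{lem:semidirect} and Corollary \ref{cor:semidirect}, and checks that the augmented complexes that occur satisfy conditions (i) and (ii) of Proposition \ref{prop:TorUnital} --- condition (ii) being exactly where $\mathbf A \triangleleft \mathbf R$ enters --- so that Proposition \ref{prop:TorUnital} forces their pro homology to vanish. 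Concretely this produces, compatibly with the $\GL(\mathbf R)$-action, a pro isomorphism between $H_l(\GL(\mathbf A))$ and the homology of the homotopy fibre of $B\GL(\mathbf R)^+ \to B\GL(\mathbf R/\mathbf A)^+$, on which side the residual action is the $\pi_1$-monodromy; since this is a map of H-spaces (indeed of infinite loop spaces via block sum), the monodromy is trivial levelwise, whence the $\GL(\mathbf R)$-action on $H_l(\GL(\mathbf A))$ is pro trivial. Equivalently, one can compare the pair $(\mathbf R,\mathbf A)$ with the universal pair $(\mathbb Z\ltimes\mathbf A,\mathbf A)$ through the Lyndon--Hochschild--Serre spectral sequence and the same excision input, thereby reducing to the constant quotient group $\GL(\mathbb Z)$ and the classical $\GL_\infty$ swindle.

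The delicate point --- and the step I expect to be the main obstacle --- is uniformity in the conjugating element: the index $s(m)\ge m$ that kills the discrepancy must be chosen independently of $g\in\GL(\mathbf R)$, since $\GL(\mathbf R)$ is not a bounded family. This is precisely what the quantitative refinement Proposition \ref{prop2:TorUnital} provides --- the index it produces in degree $l$ depends only on $\mathbf A$ and $l$, not on the particular augmented complex fed in --- so that a single $s(m)$ handles simultaneously all the complexes implementing conjugation by the various $g$, which gives the required uniform pro triviality. Finally, since Tor-unitality yields $\mathbf A/\mathbf A^2=0$, Corollary \ref{cor:Tits} transports the result from $\GL(\mathbf A)$ to $E(\mathbf A)=E(\mathbf R,\mathbf A)$ if desired; and when $\mathbf A$ is moreover commutative of finite stable range the statement also follows from Theorem \ref{main} and Corollary \ref{cor:protrivial} by passing to the colimit over $n$.
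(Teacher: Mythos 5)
The paper gives no proof of Theorem \ref{thm:SGH}: it is stated as a straightforward generalization of \cite[Corollary 4.5]{Su95}, \cite[Corollary 1.6]{SW92} and \cite[Proposition 1.3]{GH06}, the only genuinely new ingredient being the reduction from a general pro ring $\mathbf{B}$ to the unital pro ring $\mathbb{Z}\ltimes\mathbf{B}$. Your proposal identifies exactly that reduction as the only new step and then gives a faithful sketch of the cited excision/monodromy argument (including the uniformity point handled by Proposition \ref{prop2:TorUnital}), so it follows essentially the same route as the paper.
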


By using Theorem \ref{thm:SGH}, we can strengthen our main theorem, Theorem \ref{thm2:prostability}.

\begin{theorem}
Let $\mbf{A}$ be a commutative Tor-unital pro ring, $r=\max(\sr(\mbf{A}),2)$ and $l\ge 0$.
Suppose that there exists a unital pro ring $\mbf{R}$ with $\sr(\mbf{R})<\infty$ which contains $\mbf{A}$ as a two-sided ideal.
Then the canonical map
\[
	H_l(\GL_n(\mbf{A})) \to H_l(\GL(\mbf{A}))
\]
is a pro epimorphism for $n\ge 2l+r-2$ and a pro isomorphism for $n\ge 2l+r-1$.
\end{theorem}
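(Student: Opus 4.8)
The plan is to deduce the statement from the pro stability already established (Theorem~\ref{thm2:prostability}) together with a single comparison between the finite and stable general linear groups of $\mbf{A}$, carried out against the auxiliary unital pro ring $\mbf{R}$. First one reduces to the following: for one sufficiently large $N$ — larger than all the stability bounds appearing below, so in particular $N\ge 2l+r-1$ — the canonical map $\{H_l(\GL_N(A_m))\}_m\to\{H_l(\GL(A_m))\}_m$ is a pro isomorphism. Granting this, Theorem~\ref{thm2:prostability} propagates it downward: for $2l+r-1\le n\le N$ the maps $H_l(\GL_n(\mbf{A}))\to H_l(\GL_N(\mbf{A}))$ are pro isomorphisms, and for $n=2l+r-2$ the map $H_l(\GL_n(\mbf{A}))\to H_l(\GL_{n+1}(\mbf{A}))$ is a pro epimorphism, which is what is asserted. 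The argument then runs by induction on $l$, the cases $l=0,1$ being immediate, so that the comparison is available in all homological degrees $q<l$.

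Write $\mbf{R}=\{R_m\}$, $\mbf{A}=\{A_m\}$ and set $Q_n^m:=\GL_n(R_m)/\GL_n(A_m)=\mathrm{im}(\GL_n(R_m)\to\GL_n(R_m/A_m))$, using $\GL_n(A_m)=\ker(\GL_n(R_m)\to\GL_n(R_m/A_m))$. For each $n$ (including $n=\infty$) and each $m$ there is a Hochschild--Serre spectral sequence
\[
	E^2_{pq}(n,m)=H_p\bigl(Q_n^m;\,H_q(\GL_n(A_m))\bigr)\Rightarrow H_{p+q}(\GL_n(R_m)),
\]
natural in $n$ (the stabilization inclusions) and in $m$. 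The role of the hypothesis $\sr(\mbf{R})<\infty$ is to make the $\mbf{R}$-side uniform in the pro direction: applying Suslin's classical homology stability theorem \cite{Su82} levelwise to the unital rings $R_m$ and $R_m/A_m$, whose stable ranges are all bounded by $\sr(\mbf{R})$, one finds that $H_*(\GL_n(R_m))\to H_*(\GL(R_m))$ and $H_*(\GL_n(R_m/A_m))\to H_*(\GL(R_m/A_m))$ are honest isomorphisms for $n$ past a bound depending only on the homological degree and $\sr(\mbf{R})$, not on $m$. Hence for $N$ large $H_*(\GL_N(R_m))=H_*(\GL(R_m))$, $Q_N^m=Q_\infty^m$ and $H_*(Q_N^m)=H_*(Q_\infty^m)$, and the spectral sequences $E(N,m)$ and $E(\infty,m)$ differ only in the coefficient systems $H_q(\GL_N(A_m))$ versus $H_q(\GL(A_m))$.

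Next one compares $E(N,\bullet)\to E(\infty,\bullet)$ as a morphism of spectral sequences in the abelian category of pro abelian groups. By the inductive hypothesis $\{H_q(\GL_N(A_m))\}_m\to\{H_q(\GL(A_m))\}_m$ is a pro isomorphism of $\{Q_\infty^m\}$-modules for every $q<l$, so $\{E^2_{pq}(N,m)\}_m\to\{E^2_{pq}(\infty,m)\}_m$ is a pro isomorphism for all $q<l$, while the abutments in total degrees $l$ and $l+1$ are isomorphic (even levelwise). Feeding this into the spectral-sequence comparison Lemma~\ref{lem:spectral2} with $L=l+1$ — only the rows $q\le l-1$ and those two abutments enter — gives that $\{E^2_{0,l}(N,m)\}_m\to\{E^2_{0,l}(\infty,m)\}_m$, i.e.\ $\{H_l(\GL_N(A_m))_{Q_N^m}\}_m\to\{H_l(\GL(A_m))_{Q_\infty^m}\}_m$, is a pro isomorphism. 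Finally $Q_\infty^m$ acts pro trivially on $H_l(\GL(A_m))$, being a quotient of $\GL(R_m)$ and $\mbf{A}$ being Tor-unital (Theorem~\ref{thm:SGH}), while $Q_N^m$ acts pro trivially on $H_l(\GL_N(A_m))$ by Corollary~\ref{cor:protrivial} with the now-fixed value $N\ge 2l+r-1$; so taking coinvariants changes nothing up to pro isomorphism on either side, and the displayed map is identified with the canonical $\{H_l(\GL_N(A_m))\}_m\xrightarrow{\sim}\{H_l(\GL(A_m))\}_m$.

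The hard part will be the uniform stabilization of the relative group $Q_n^m=\mathrm{im}(\GL_n(R_m)\to\GL_n(R_m/A_m))$ used above: one must know $\{H_p(Q_n^m)\}_m=\{H_p(Q_\infty^m)\}_m$ for $n$ large, uniformly in $m$ and without invoking Tor-unitality. The plan is to sandwich $Q_n^m$ between $E_n(R_m/A_m)$ and $\GL_n(R_m/A_m)$: the quotient $\GL_n(R_m/A_m)/Q_n^m$ stabilizes to $\mathrm{coker}(K_1(R_m)\to K_1(R_m/A_m))$ for $n$ past a bound depending only on $\sr(\mbf{R})$ (Vaser\v{s}te\u\i n, Theorem~\ref{thm:Vaserstein}), after which a comparison of the Hochschild--Serre spectral sequences of $1\to Q_n^m\to\GL_n(R_m/A_m)\to\GL_n(R_m/A_m)/Q_n^m\to 1$ at levels $n$ and $\infty$, invoking Suslin's theorem for the middle and outer terms, forces $H_*(Q_n^m)$ to stabilize. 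It is worth stressing why iterating Theorem~\ref{thm2:prostability} alone does not do the job: a levelwise filtered colimit of pro isomorphisms need not be a pro isomorphism, since the shift functions can degrade along the tower, and it is precisely the genuine (not merely pro) uniformity on the $\mbf{R}$-side, supplied by $\sr(\mbf{R})<\infty$, that furnishes a fixed reference against which $H_l(\GL_N(\mbf{A}))$ and $H_l(\GL(\mbf{A}))$ can be pinned down.
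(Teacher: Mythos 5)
Your proposal is correct and follows essentially the same route as the paper: compare the extensions $1\to\GL_N(\mbf{A})\to\GL_N(\mbf{R})\to\overline{\GL}_N(\mbf{R}/\mbf{A})\to 1$ with their stable counterparts via the Hochschild--Serre spectral sequence, using Suslin's classical stability on the unital side (which is where $\sr(\mbf{R})<\infty$ enters), the pro triviality of the actions from Corollary~\ref{cor:protrivial} and Theorem~\ref{thm:SGH}, and then propagating downward with Theorem~\ref{thm2:prostability}. You are in fact more explicit than the paper on the one point it asserts without proof, namely the uniform stabilization of $H_*(\overline{\GL}_n(\mbf{R}/\mbf{A}))$ (your $Q_n^m$); the only small point to add is that the pro triviality of the coefficient actions is also needed in the rows $q<l$ (via the $\otimes$/$\Tor$ decomposition of the $E^2$-terms) to justify the spectral-sequence comparison there, not only in row $l$.
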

\begin{proof}
Let $\mbf{R}$ be a unital pro ring as in the statement.
Consider the commutative diagram
\[
\xymatrix{
	\GL_n(\mbf{A}) \ar[r] \ar[d] & \GL_n(\mbf{R}) \ar[r] \ar[d] & \overline{\GL}_n(\mbf{R}/\mbf{A}) \ar[d] \\
	\GL(\mbf{A}) \ar[r] 		 & \GL(\mbf{R}) \ar[r] 			& \overline{\GL}(\mbf{R}/\mbf{A})
}
\]
with exact rows.
Now, the second and the third maps induce isomorphisms on homology for $n$ large enough.
Also, the action of $\GL_n(\mbf{R})$ on $H_l(\GL_n(\mbf{A}))$ is pro trivial for $n$ large enough (Theorem \ref{cor:protrivial}) and for $n=\infty$ (Theorem \ref{thm:SGH}).
Consequently, the canonical map
\[
	H_l(\GL_n(\mbf{A})) \xrightarrow{\sim} H_l(\GL(\mbf{A}))
\]
is a pro isomorphism for $n$ large enough.
Combining it with Theorem \ref{thm2:prostability}, we get the result.
\end{proof}

\end{document}